\newtheorem{sat}{Theorem}[section]		
\newtheorem{lem}[sat]{Lemma}
\newtheorem{kor}[sat]{Corollary}			\newtheorem{prop}[sat]{Proposition}
\newtheorem{bei}{Example}				
\newtheorem*{defi*}{Definition}			\newtheorem*{bei*}{Example}
\newtheorem*{sat*}{Theorem}				\newtheorem*{kor*}{Corollary}
\newtheorem*{rmk*}{Remark}				\newtheorem{quest}{Question}	
\let\ssection=\section
\renewcommand{\section}{\setcounter{equation}{0}\ssection}
\newtheorem*{namedtheorem}{\theoremname}
\newcommand{\theoremname}{testing}
\newenvironment{named}[1]{\renewcommand{\theoremname}{#1}\begin{namedtheorem}}{\end{namedtheorem}}
\theoremstyle{remark}
\newtheorem*{bem}{Remark}
\newcommand{\BC}{\mathbb C}			\newcommand{\BH}{\mathbb H}
\newcommand{\BR}{\mathbb R}			
			\newcommand{\BQ}{\mathbb Q}
			\newcommand{\BZ}{\mathbb Z}
				\newcommand{\BT}{\mathbb T}
\newcommand{\CG}{\mathcal G}
		\newcommand{\CN}{\mathcal N}
		\newcommand{\CP}{\mathcal P}
		\newcommand{\CX}{\mathcal X}
		\newcommand{\CZ}{\mathcal Z}
\newcommand{\actson}{\curvearrowright}
\newcommand{\D}{\partial}
\newcommand{\bs}{\backslash}
\DeclareMathOperator{\SL}{SL}		%	Spezielle lineare Gruppe
\DeclareMathOperator{\GL}{GL}		%	Allgemeine lineare Gruppe
\DeclareMathOperator{\Id}{Id}		%	Identit\"at
\DeclareMathOperator{\Isom}{Isom}	%	Isometrien einer Mf
\DeclareMathOperator{\rank}{rank}
\newcommand{\comment}[1]{}
\DeclareMathOperator{\SO}{SO}
\DeclareMathOperator{\Gal}{Gal}
\DeclareMathOperator{\Stab}{Stab}
\DeclareMathOperator{\Span}{Span}
\DeclareMathOperator{\pol}{pol}
\begin{document}

\title[]{Periodic maximal flats are not peripheral}
\author{Alexandra Pettet \& Juan Souto}
\thanks{The first author is partially supported by NSF grant DMS-0856143 and NSF RTG DMS-0602191; the second author was partially supported by NSF grant DMS-0706878 and the Alfred P. Sloan Foundation} 

\begin{abstract}
We prove that every non-positively curved locally symmetric manifold $M$ of finite volume contains a compact set $K$ such that no periodic maximal flat in $M$ can be homotoped out of $K$.
\end{abstract}
\maketitle

\centerline{\em Este art\'iculo est\'a dedicado a Cloe y a la madre que la pari\'o.}

\section{Introduction}
Suppose that $M$ is a non-compact manifold with empty boundary $\D M=\emptyset$. A continuous map $f:X\to M$ is {\em peripheral} if for every compact set $C\subset M$ there is $f_C:X\to M$ homotopic to $f$ and with $f_C(X)\cap C=\emptyset$. If the manifold $M$ is {\em topologically tame}, i.e.~homeomorphic to the interior of a compact manifold with boundary $\bar M$, then $f:X\to M$ is peripheral if and only if it is homotopic, within $\bar M$, to a map whose image is contained in $\D\bar M$. In this note we will be exclusively interested in locally symmetric manifolds $M$ of finite volume; these manifolds are topologically  tame by the work of Borel-Serre \cite{Borel-Serre}. Here, and throughout the paper, we use {\em symmetric space} to mean a simply connected non-positively curved Riemannian symmetric space without euclidean factor. A {\em locally symmetric manifold} $M$ is a Riemannian manifold whose universal cover is a symmetric space. 

The following result is an almost direct consequence of  the work of Borel-Serre \cite{Borel-Serre}:

\begin{sat}\label{peripheral}
If $M$ is an arithmetic locally symmetric manifold then every map $f:X\to M$, where $X$ is a CW-complex of dimension $\dim(X)<\rank_\BQ(M)$, is peripheral.
\end{sat}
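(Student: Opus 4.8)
The plan is to reduce the assertion to a routine compression argument carried out in the Borel--Serre compactification, where the geometry enters only through the connectivity of the rational Tits building. First I would replace $M$ by its Borel--Serre bordification: writing $M=\Gamma\backslash X$ with $X=\widetilde M$ the symmetric space and $\Gamma$ a torsion-free arithmetic lattice, Borel--Serre \cite{Borel-Serre} produce a contractible manifold with corners $\bar X\supset X$ on which $\Gamma$ acts freely, properly discontinuously and cocompactly, so that $\bar M=\Gamma\backslash\bar X$ is a compact manifold with corners whose interior is $M$. As recalled in the introduction, $f\colon X\to M$ is peripheral exactly when it is homotopic inside $\bar M$ to a map landing in $\partial\bar M$. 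We may assume $M$ is connected (otherwise argue componentwise) and $\rank_\BQ(M)\ge 1$ (otherwise $\dim X<0$ and there is nothing to prove); then $M$ is noncompact and $\partial\bar M\ne\emptyset$.

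The geometric input I would quote is that $\partial\bar X$ is $\Gamma$-equivariantly homotopy equivalent to the realization of the rational Tits building of the ambient $\BQ$-group (Borel--Serre \cite{Borel-Serre}), which by the Solomon--Tits theorem is a wedge of spheres of dimension $\rank_\BQ(M)-1$; hence $\partial\bar X$ is $(\rank_\BQ(M)-2)$-connected, and in particular connected once $\rank_\BQ(M)\ge 2$. Since $\bar X$ is contractible, the long exact sequence of the pair then shows that $\pi_k(\bar X,\partial\bar X)$ is trivial for every $k$ with $1\le k\le\rank_\BQ(M)-1$ (for $k\ge 2$ it is the group $\pi_{k-1}(\partial\bar X)=0$, and for $k=1$ it is the pointed set $\pi_0(\partial\bar X)$, a point because $\partial\bar X$ is connected).

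Next I would transport this to $\bar M$. The covering $\bar X\to\bar M$ restricts to a covering $\partial\bar X\to\partial\bar M$, and since $D^k$ is simply connected for $k\ge 1$, any map of pairs $(D^k,S^{k-1})\to(\bar M,\partial\bar M)$ lifts to a map $(D^k,S^{k-1})\to(\bar X,\partial\bar X)$; it follows that $\pi_k(\bar M,\partial\bar M,b)$ is trivial for every basepoint $b\in\partial\bar M$ and every $k$ with $1\le k\le\rank_\BQ(M)-1$, while $\pi_0(\partial\bar M)\to\pi_0(\bar M)$ is onto because $\partial\bar M\ne\emptyset$ and $\bar M$ is connected. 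The standard compression lemma now finishes the proof: given $f\colon X\to M\subset\bar M$ with $\dim X<\rank_\BQ(M)$, one homotopes $f$ into $\partial\bar M$ by induction over the skeleta of $X$, using at the $k$-th stage the vanishing of $\pi_k(\bar M,\partial\bar M)$ to push the image of each $k$-cell into $\partial\bar M$ relative to the $(k-1)$-skeleton, and the homotopy extension property to globalize; this is possible for all $k\le\dim X$, and the final map exhibits $f$ as peripheral.

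The step demanding the most care is not the obstruction theory, which is routine, but citing the Borel--Serre results in exactly the right form --- that $\partial\bar X$ has the homotopy type of the rational Tits building, together with the Solomon--Tits computation of its connectivity --- and handling the borderline degree $k=1$: there one must separately observe that $\rank_\BQ(M)\ge 2$ forces the Tits building, hence $\partial\bar X$, to be connected, so that $\pi_1(\partial\bar M)\to\Gamma$ is surjective. When $\rank_\BQ(M)=1$ only $0$-dimensional complexes occur and nothing beyond $\partial\bar M\ne\emptyset$ is required.
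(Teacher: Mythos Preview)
Your proof is correct and follows essentially the same approach as the paper's: both reduce to the Borel--Serre bordification, invoke that $\partial\bar S$ has the homotopy type of the rational Tits building (a bouquet of $(\rank_\BQ(M)-1)$-spheres), and then compress $f$ into the boundary by induction over skeleta. The only difference is packaging: you phrase the obstruction vanishing via the relative homotopy groups $\pi_k(\bar M,\partial\bar M)$ and the standard compression lemma, whereas the paper lifts individual cells to $\bar S$ by hand and uses $\pi_{d-1}(\partial\bar S)=0$ together with contractibility of $\bar S$ directly.
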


There are a number of definitions of arithmetic to be found in the literature; as we will see below, Theorem \ref{peripheral} holds true for any of them. For reasons of expediency, we choose to say that a locally symmetric manifold 
$$M=\Gamma\bs G_\BR^0/K$$
is arithmetic if  $G_\BR^0$ is the identity component of the group of real points of a connected semisimple algebraic group $G\subset\SL_m\BC$ defined over $\BQ$, $K$ is a maximal compact subgroup of $G_\BR^0$, and $\Gamma$ is a torsion free, finite index subgroup of the group $G_\BZ$ of integer points. In the statement of Theorem \ref{peripheral}, $\rank_\BQ(M)$ denotes the $\BQ$-rank of $G$.

It is due to Borel that every arithmetic locally symmetric manifold has finite volume. On the other hand,  the arithmeticity theorem of Margulis asserts that every finite volume locally symmetric manifold which is neither negatively curved nor finitely covered by a product is commensurable to an arithmetic locally symmetric manifold. In section \ref{sec:peripheral} we will deduce that, after giving a reasonable definition of $\rank_\BQ(M)$, Theorem \ref{peripheral} is not only true for arithmetic, but also for arbitrary locally symmetric manifolds of finite volume.

It is well-known that if $M$ has finite volume and is negatively curved, then (parametrized) non-trivial geodesics are not peripheral. In general, this is no longer true. In fact, as an immediate corollary of Theorem \ref{peripheral} we obtain:

\begin{bei}\label{example1}
Suppose that $\Gamma\subset\SL_n\BZ$ is a finite index, torsion free subgroup and consider the locally symmetric manifold $M=\Gamma\bs\SL_n\BR/\SO_n$. Then $\rank_\BQ(M)=n-1$ and hence every geodesic in $M$ is peripheral if $n\ge 3$.
\end{bei}

In order to obtain positive results, in place of geodesics we consider {\em periodic maximal flats} in finite volume locally symmetric manifolds $M$. By a {\em flat} we mean an isometric immersion $\phi:F\to M$ of a complete and connected, locally euclidean manifold $F$ into $M$. The maximal possible dimension $\dim(F)$ of a flat in $M$ is $\rank_\BR(M)$, the real rank of the symmetric space covering $M$. A flat $\phi:F\to M$ with $\dim(F)=\rank_\BR(M)$ is said to be {\em maximal}, and it is {\em periodic} if  $F$ is closed. Existence of periodic maximal flats is due to Prasad-Raghunathan \cite[Theorem 2.8]{Prasad-Raghunathan} in the arithmetic case; for general finite volume locally symmetric manifolds existence follows from the Prasad-Raghunathan result and the arithmeticity theorem of Margulis (see Proposition \ref{per-max-exist} below). We can state now our main result:

\begin{sat}\label{main}
In a finite volume locally symmetric manifold $M$, no periodic maximal flat is peripheral. 
\end{sat}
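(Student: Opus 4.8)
The plan is to argue by contradiction, using the arithmeticity reduction already indicated in the excerpt. So suppose a periodic maximal flat $\phi:F\to M$ in a finite volume locally symmetric manifold $M$ is peripheral. Passing to a finite cover (which is allowed, since lifting a homotopy out of a compact set in the cover pushes down, and periodic maximal flats lift to periodic maximal flats), and using the arithmeticity theorem of Margulis together with Theorem \ref{peripheral}, we may assume $M=\Gamma\bs G_\BR^0/K$ is arithmetic with $\Gamma\subset G_\BZ$ torsion free of finite index, and we may assume $\rank_\BQ(M)\ge 1$ — for if $\rank_\BQ(M)=0$ then $M$ is compact and the statement is vacuous, and if $M$ is negatively curved the classical fact quoted in the excerpt (together with the observation that in rank one a maximal flat is a closed geodesic) already gives the conclusion.

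The geometric heart of the argument is a coarse convexity/incompressibility estimate for the preimage of a periodic maximal flat in the symmetric space $\widetilde M = G_\BR^0/K$. Let $\widetilde F\subset\widetilde M$ be a maximal flat whose $\Gamma$-stabilizer $\Lambda$ acts cocompactly on it (this is exactly periodicity of $\phi(F)$), so $\Lambda$ is a lattice in a group of the form $Z_G(A)$ for a maximal $\BR$-split torus $A$, and $\phi$ factors as $\Lambda\bs\widetilde F\to\Gamma\bs\widetilde M$. Now here is where the flat being \emph{periodic} — hence stabilized by an arithmetic-like subgroup — should be exploited: the stabilizer $\Lambda$ of $\widetilde F$ in $\Gamma$ must contain hyperbolic elements (it cannot consist only of elliptic/unipotent elements, since it acts cocompactly on a maximal flat of positive dimension), and by the Godement compactness criterion such elements force $\widetilde F$ to come close to every rational parabolic, i.e.\ the flat cannot be contained, even after homotopy, in a single horoball neighborhood of the Borel-Serre boundary. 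More precisely, I would use the Borel-Serre bordification $\overline{M}^{BS}$: a map $f:X\to M$ is peripheral iff it is homotopic within $\overline{M}^{BS}$ into $\D\overline{M}^{BS}$, and $\D\overline{M}^{BS}$ is a union of faces $e(P)$ indexed by proper rational parabolic subgroups $P$, each face $e(P)$ being a bundle with fibre the nilmanifold $N_\BZ\bs N_\BR$ (the contractible-in-the-right-sense Borel-Serre fibres). The key point is that the corner structure near each face is modelled on $[0,\infty)^{r}$ with $r=\rank_\BQ P$, and the whole bordification deformation retracts onto a complex built from the Tits building of $\BQ$-parabolics.

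So the main steps are: (1) reduce to the arithmetic case and to $\rank_\BQ(M)\ge1$ as above; (2) reinterpret "peripheral'' via Borel-Serre: if $\phi$ is peripheral, then $[\phi]$ lies in the image of $H_*$ (indeed is carried) by $\D\overline{M}^{BS}$, equivalently the homotopy class $[\phi]$ is detected by the restriction of a cohomology class supported near the boundary, and in particular $\phi$ lifts to a map into the union of the codimension-$\ge1$ faces; (3) derive a contradiction by showing the fundamental class of the \emph{closed} flat $F$ (or, if $F$ is not orientable, a suitable $\BZ/2$ or twisted version, or a suitable non-trivial homotopy/homology invariant) is incompressible into $\D\overline{M}^{BS}$. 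For step (3) the cleanest route I see is: a maximal flat $\widetilde F$ is a maximal totally geodesic submanifold on which $A$ acts simply transitively; any rational parabolic $P$ meets $A$ in a torus of $\BQ$-split rank $\le\rank_\BQ(M)<\rank_\BR(M)=\dim\widetilde F$; hence the "distance to the $e(P)$ face'' function restricted to $\widetilde F$, which is (after taking logarithms, essentially a Busemann-type function) affine on the flat with nonzero linear part, is unbounded \emph{below as well as above} along $\widetilde F$ — i.e.\ $\widetilde F$ cannot be pushed into the region near any single face, and a simple compactness/finiteness argument over the finitely many $\Gamma$-orbits of faces upgrades this to: $\phi(F)$ cannot be homotoped into $\D\overline M^{BS}$ at all, because $F$ is closed so its image would have to lie in a compact part of $\D\overline M^{BS}$, hence near finitely many faces, hence (projecting to the base of the Borel-Serre fibrations) inside a compact subset of a finite union of flats-in-boundary-symmetric-spaces of strictly smaller rank, which cannot carry the rank-$\rank_\BR(M)$ homotopy type of $F$ by a rank/dimension count on flats. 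Equivalently one can phrase step (3) as: the flat $\widetilde F$ together with a point of each rational parabolic's Weyl chamber forces $\widetilde F$ to return to the thick part, using reduction theory (Siegel sets).

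The main obstacle I anticipate is step (3) done \emph{honestly}: making precise the claim that "a closed maximal flat cannot be homotoped into the Borel-Serre boundary,'' because $\D\overline M^{BS}$ is itself a large complex and one must rule out the flat wrapping around the nilmanifold fibres or spreading over several faces. I expect the right tool is a monotonicity/convexity argument: the function on $\widetilde M$ measuring "depth into the union of all horoballs'' (a max of Busemann functions over rational parabolics) restricts to a proper, convex, piecewise-affine function on every maximal flat $\widetilde F$ whose sublevel sets are \emph{bounded} — because no single linear functional governing a face can be bounded below on the full-dimensional flat while its competitors are simultaneously controlled — so $\widetilde F$ is "incompressible at infinity'' in the strong sense that $\Lambda\bs\widetilde F$ cannot be homotoped, $\Gamma$-equivariantly, outside any sufficiently large thick part. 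Combining this with the (already invoked) Borel-Serre characterization of peripheral maps and with periodicity (so that $F$ is compact and its image in $\D\overline M^{BS}$ would be compact) yields the contradiction, completing the proof.
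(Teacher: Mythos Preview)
Your reduction steps (1) and (2) are fine and match the paper. The genuine gap is in step (3), and you have correctly identified it yourself as ``the main obstacle.'' Unfortunately your proposed resolution does not work. The convexity/Busemann argument you sketch is a statement about the \emph{original geometric} flat $\widetilde F\subset S$: yes, horoball-depth functions restrict to piecewise-affine functions on $\widetilde F$ with compact sublevels. But ``peripheral'' is a purely homotopical condition, and after a homotopy there is no reason any convexity survives; the map $\psi:F\to\partial\bar M$ need bear no geometric relation to the flat. Likewise your ``rank/dimension count on flats'' fails: the Borel--Serre boundary faces $e(P)$ are bundles with large nilmanifold fibres, and $\dim\partial\bar M=\dim M-1\gg\rank_\BR(M)$, so there is ample room homologically for a $\rank_\BR(M)$-torus to map in. Your compactness/finiteness upgrade from ``not into a single face'' to ``not into $\partial\bar M$ at all'' is exactly the non-trivial step, and it is not simple: the image of $F$ can and will spread over many simplices of the rational Tits building.

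The paper's argument supplies the missing ingredient. One composes the lifted map $\widetilde\psi$ with the Borel--Serre retraction $\tau:\partial\bar S\to\Delta_\BQ(G)$, pulls back the stratification of $\Delta_\BQ(G)$ by open simplices to get a cover $F=U_0\cup\cdots\cup U_{\rank_\BQ(M)-1}$, and then proves (this is the algebraic heart, Proposition~\ref{meat}) that each component $V\subset U_i$ has $H_1(V)\to H_1(F)$ of rank at most $\rank_\BR(M)-i-1$: the stabilizer of an $i$-simplex is a rational parabolic $P$ containing a $\BQ$-split torus $S$ of dimension $i+1$, and an abelian group of semisimple elements in $\Gamma\cap P$ stabilizing a periodic maximal flat must have rank $\le\rank_\BR(G)-\dim S$. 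The contradiction then comes from McMullen's covering theorem for tori (Theorem~\ref{McMullen}): there is no cover $\BT^n=U_1\cup\cdots\cup U_n$ with each component of $U_i$ having $H_1$-image of rank $\le i-1$. This topological input is what replaces your hoped-for convexity, and something of comparable strength appears to be genuinely necessary.
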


It follows directly from Theorem \ref{peripheral} that for any $M$ as in Example \ref{example1} there are peripheral periodic flats $f:F\to M$ with $\dim(F)=n-2=\rank_\BR(M)-1$. In fact, it is not difficult to see that this remains true for general non-compact locally symmetric manifolds with finite volume.

\begin{prop}\label{optimal}
For every non-compact locally symmetric manifold $M$ with finite volume there is a peripheral periodic flat $f:F\to M$ with $\dim(F)=\rank_\BR(M)-1$.
\end{prop}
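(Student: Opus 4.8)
The plan is to realize the desired flat inside the symmetric space attached to a maximal proper $\BQ$-parabolic subgroup, and then to push it off every compact set by means of the Borel--Serre geodesic action. First I would dispose of the easy reductions: if $\rank_\BR(M)\le 1$ a point can be pushed off every compact set along a proper ray, which already gives a peripheral periodic $0$-flat; if $\Gamma$ is reducible then a finite cover of $M$ splits as a product $M_1\times\cdots\times M_k$ and one combines a peripheral flat in a non-compact factor with periodic maximal flats (Proposition~\ref{per-max-exist}) in the remaining factors; and an irreducible lattice of real rank at least $2$ is arithmetic by Margulis. Since a peripheral periodic flat in a finite cover of $M$ descends to one in $M$ (and, after passing to a finite cover of the flat, conversely), we may assume $M=\Gamma\bs G_\BR^0/K$ is arithmetic of real rank at least $2$. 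As $M$ is non-compact, $\rank_\BQ(M)\ge 1$, so $G$ contains a proper maximal $\BQ$-parabolic subgroup $P$; writing its Langlands decomposition as $P=N_PM_PA_P$, maximality of $P$ forces $\dim A_P=1$, while $M_P$ is a semisimple $\BQ$-group whose real rank equals $\rank_\BR(M)-1$, since a Levi subgroup of $G$ has the same real rank as $G$.

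Next I would build the flat. Equip $M_P$ with the $\BZ$-structure coming from $M_P\subset G\subset\SL_m\BC$. By the theorem of Borel and Harish-Chandra, $M_P(\BZ)$ is a lattice in $M_P(\BR)$, so $\Gamma_{M_P}:=\Gamma\cap M_P(\BR)^0$, being torsion free and of finite index in $M_P(\BZ)$, is a torsion free lattice in $M_P(\BR)^0$. Hence $\Gamma_{M_P}\bs X_{M_P}$, where $X_{M_P}=M_P(\BR)^0/K_{M_P}$, is a finite volume locally symmetric manifold of real rank $\rank_\BR(M)-1$, and by Proposition~\ref{per-max-exist} it contains a periodic maximal flat $\psi:F\to\Gamma_{M_P}\bs X_{M_P}$ with $\dim F=\rank_\BR(M)-1$. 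Choosing the basepoint of $X$ to lie in $X_{M_P}$, the inclusion $X_{M_P}\hookrightarrow X$ is totally geodesic and $\Gamma_{M_P}$-equivariant, hence induces a map $\iota:\Gamma_{M_P}\bs X_{M_P}\to M$; then $\phi:=\iota\circ\psi:F\to M$, with $F$ given the flat metric that makes $\phi$ isometric, is a periodic flat of dimension $\rank_\BR(M)-1$.

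The heart of the matter, and the step I expect to be the genuine obstacle, is to prove that $\phi$ is peripheral; the subtlety is that the geodesic action is not globally defined on $M$, so ``pushing toward the cusp'' is not a priori a homotopy of $\phi$. I would proceed as follows. In horospherical coordinates $X=N_P(\BR)\times A_P\times X_{M_P}$ relative to $P$ and the chosen basepoint, $X_{M_P}$ is the slice $\{(e,1,z)\}$, so the lift $\widetilde\psi(\widetilde F)$ lies in this slice and $\pi_1(F)$ acts on $X$ through $\Gamma_{M_P}\subset M_P(\BR)^0\subset\Gamma_P:=\Gamma\cap P(\BR)$. The geodesic action $a\cdot(n,a',z)=(n,aa',z)$ of $A_P$ alters only the $A_P$-coordinate, hence commutes with the whole left $P(\BR)$-action on $X$; in particular it descends to $\Gamma_P\bs X$ and commutes with $\pi_1(F)$. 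Consequently, letting $a_t$ run to infinity in the positive chamber $A_P^+\cong(0,\infty)$, the maps $\widetilde x\mapsto a_t\cdot\widetilde\psi(\widetilde x)$ remain $\pi_1(F)$-equivariant and descend to a homotopy $\phi_t:F\to M$ with $\phi_0=\phi$. Since $\psi(F)$ is compact, its lift in $X_{M_P}$ is contained in $\Gamma_{M_P}\cdot\omega$ for some compact $\omega\subset X_{M_P}$, so, using the commutation once more, $\phi_t(F)$ lies in the image in $M$ of the Siegel-type set $\{(e,a_t,z):z\in\omega\}$. By reduction theory (Borel--Serre \cite{Borel-Serre}), as $a_t\to\infty$ this set is swallowed by arbitrarily small boundary neighbourhoods of $M$ at the face of $\D\bar M$ determined by $P$, and so eventually misses any prescribed compact subset of $M$. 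This shows that $\phi$ is peripheral, which completes the proof.
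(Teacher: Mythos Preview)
Your argument is correct and follows the same route as the paper: reduce to the arithmetic case, produce a periodic flat of dimension $\rank_\BR(M)-1$ whose fundamental group lands in a maximal proper $\BQ$-parabolic $P$ via Prasad--Raghunathan applied to a Levi factor, and conclude peripherality from $\pi_1(F)\subset P$; the paper simply asserts this last step (citing \cite[11.7]{Borel-arit} for the subgroup $H\subset P$ of real rank $\rank_\BR(G)-1$ and then observing that $\Lambda\subset P$), while you spell out the homotopy explicitly via the Borel--Serre geodesic action. One small caveat, present in the paper's formulation as well: $M_P$ is in general only reductive, not semisimple --- for a Hilbert modular surface, for instance, $M_P$ is a one-dimensional $\BQ$-anisotropic torus and $X_{M_P}$ is a Euclidean line --- so Proposition~\ref{per-max-exist} does not literally apply to $\Gamma_{M_P}\bs X_{M_P}$, but since the arithmetic subgroup of the $\BQ$-anisotropic central torus is cocompact, the Euclidean factor of $X_{M_P}$ is automatically periodic and the argument goes through unchanged.
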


On the other hand, the authors suspect that for every  locally symmetric manifold $M$ with finite volume there is also a non-peripheral periodic flat $f:F\to M$ with $\dim(F)=\rank_\BQ(M)$.

Returning to the statement of Theorem \ref{main}, recall that it is due to Tomanov and Weiss \cite{Tomanov-Weiss} that every locally symmetric manifold $M$ contains a compact set $C$ such that the image of every maximal flat intersects $C$. Theorem \ref{main} asserts that, for periodic maximal flats, this is also true up to homotopy. In fact, if we define a {\em soul} of $M$ to be a compact CW-complex $\CX\subset M$ whose complement $M\setminus\CX$ can be homotoped, within $M$, outside of every compact set $C\subset M$, we have:

\begin{kor}\label{kor:soul}
If $\CX\subset M$ is a soul of $M$, then the image of every map homotopic to a periodic maximal flat intersects $\CX$.\qed
\end{kor}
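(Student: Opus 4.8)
The plan is to deduce the statement from Theorem \ref{main} by a short argument that merely unwinds the definitions involved. I would argue by contradiction: suppose $\CX$ is a soul of $M$ and that some map $g\colon F\to M$ homotopic to a periodic maximal flat $f\colon F\to M$ satisfies $g(F)\cap\CX=\emptyset$. Then $g$ factors as $g=\iota\circ g'$, where $\iota\colon M\setminus\CX\hookrightarrow M$ is the inclusion and $g'\colon F\to M\setminus\CX$ is the corestriction of $g$.

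Next, I would fix an arbitrary compact set $C\subset M$. By the definition of a soul there is a homotopy $H\colon (M\setminus\CX)\times[0,1]\to M$ with $H(\cdot,0)=\iota$ and $H(M\setminus\CX,1)\cap C=\emptyset$. The composition $H\circ(g'\times\mathrm{id}_{[0,1]})$ is then a homotopy in $M$ from $g$ to a map whose image is disjoint from $C$. Since $g$ is homotopic to $f$ and $C$ was arbitrary, this exhibits $f$ as a peripheral map, contradicting Theorem \ref{main}, which asserts that no periodic maximal flat in a finite volume locally symmetric manifold is peripheral. Hence no such $g$ exists, i.e.\ $g(F)\cap\CX\ne\emptyset$ for every $g$ homotopic to a periodic maximal flat.

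There is essentially no obstacle here: the entire content of the corollary is supplied by Theorem \ref{main}. The only mild subtlety worth spelling out is that "peripheral" only requires homotoping $g$ off each compact set $C$ one at a time (rather than a single homotopy working for all $C$ simultaneously), and this is precisely what the soul hypothesis delivers.
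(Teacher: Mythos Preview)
Your proof is correct and follows the same approach as the paper, which treats the corollary as an immediate consequence of Theorem~\ref{main} (the statement is given with a \qed and no proof; later in Section~\ref{sec:final} the paper merely remarks that ``every map whose image misses some soul is peripheral'' and invokes Theorem~\ref{main}). Your write-up simply spells out this one-line observation.
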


Being topologically tame, every finite volume locally symmetric manifold $M$ has a soul. For instance, if $M$ is negatively curved, a suitably chosen thick part is a soul. 

In the case that $M=\Gamma\bs\SL_n\BR/\SO_n$ with $\Gamma\subset\SL_n\BZ$, a particularly nice soul can be described. To begin with, identify $\SL_n\BR/\SO_n$ with the space of isometry classes of marked lattices in $\BR^n$ with covolume $1$. A lattice is {\em well-rounded} if the set of its shortest non-trivial vectors spans $\BR^n$ as an $\BR$-vector space. The subset $\CX\subset\SL_n\BR/\SO_n$ consisting of isometry classes of well-rounded lattices is the so called {\em well-rounded retract}. It is due to Soul\'e \cite{Soule} for $n=3$ and Ash \cite{Ash} in general that $\CX$ is an $\SL_n\BZ$-invariant deformation retract of $\SL_n\BR/\SO_n$. If $\Gamma$ is a finite index, torsion free subgroup of $\SL_n\BZ$ we denote also by $\CX$ the projection of $\CX\subset\SL_n\BR/\SO_n$ to $M=\Gamma\bs\SL_n\BR/\SO_n$ and refer to it as the well-rounded retract of $M$. It follows from \cite[Proposition 6.3]{wr-minimal} that the well-rounded retract is a soul of $M$. In particular, we deduce that the image of every map homotopic to a periodic maximal flat in $M$ intersects the well-rounded retract. As a corollary we obtain a proof of the following result due to McMullen  \cite[Theorem 4.1]{McMullen}:

\begin{sat}[McMullen]\label{McMullen-wr}
Let $H\subset\SL_n\BR$ be the diagonal group and suppose that $A\in\SL_n\BR$ is such that the projection of $HA$ is compact in $\SL_n\BR/\SL_n\BZ$. Then there is $B\in H$ such that $BA\BZ^n$ is a well-rounded lattice.
\end{sat}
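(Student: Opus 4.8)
The plan is to obtain Theorem \ref{McMullen-wr} as a direct application of Corollary \ref{kor:soul} to the well-rounded retract, via the standard correspondence between compact diagonal orbits and periodic maximal flats.

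First I would pass to a torsion free finite index subgroup $\Gamma\subset\SL_n\BZ$ --- for instance a principal congruence subgroup --- and set $M=\Gamma\bs\SL_n\BR/\SO_n$, a finite cover of the orbifold $\SL_n\BZ\bs\SL_n\BR/\SO_n$. Since the diagonal group $H$ is a maximal $\BR$-split torus, its orbit through the base point is the standard maximal flat of $\SL_n\BR/\SO_n$, and every maximal flat is a translate of it. The hypothesis that $HA$ has compact projection in $\SL_n\BR/\SL_n\BZ$ is equivalent, via the map $g\mapsto g^{-1}$, to the corresponding $H$-orbit in $\SL_n\BZ\bs\SL_n\BR$ being compact, which by the standard dictionary (see the discussion around Proposition \ref{per-max-exist}) forces the maximal flat $A^{-1}H$ of $\SL_n\BR/\SO_n$ to descend to a \emph{periodic} maximal flat of $\SL_n\BZ\bs\SL_n\BR/\SO_n$. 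Pulling back to the finite cover $M$, we obtain a periodic maximal flat $\phi\colon F\to M$ whose image is the projection of $A^{-1}H$.

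Now by Soul\'e and Ash \cite{Soule,Ash} the isometry classes of well-rounded lattices form a deformation retract of $\SL_n\BR/\SO_n$, and by \cite[Proposition 6.3]{wr-minimal} its image $\CX\subset M$, the well-rounded retract of $M$, is a soul of $M$. Corollary \ref{kor:soul}, which rests on Theorem \ref{main}, therefore applies to $\phi$: the image of every map homotopic to $\phi$ meets $\CX$, and in particular $\phi(F)\cap\CX\neq\emptyset$. To finish, I would choose $p\in F$ with $\phi(p)\in\CX$, lift $p$ to a point $A^{-1}B^{-1}K$ of the flat $A^{-1}H$ with $B\in H$, and observe that $\phi(p)\in\CX$ says precisely that the lattice represented by this point is well-rounded: the $\Gamma$-action changes only the marking, by an integer change of basis, and well-roundedness is a property of the underlying lattice up to isometry, insensitive to the marking. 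Under the identification of $\SL_n\BR/\SO_n$ with isometry classes of marked lattices that sends $gK$ to $g^{-1}\BZ^n$, this lattice is $BA\BZ^n$, so $BA\BZ^n$ is well-rounded, which is the assertion. (With the other natural identification one gets instead a well-rounded lattice of the form $B\,{}^{\mathrm t}A^{-1}\BZ^n$ with $B$ diagonal; since ${}^{\mathrm t}A^{-1}$ again has compact $H$-orbit in $\SL_n\BR/\SL_n\BZ$, re-running the argument with ${}^{\mathrm t}A^{-1}$ in place of $A$ recovers the stated conclusion, so the choice of identification is immaterial.)

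Given what has already been recorded in the introduction, nothing deep remains once Corollary \ref{kor:soul} is in hand: the substantive steps are the passage to a torsion free finite index cover --- so that the paper's results, stated for manifolds, become available --- and the recognition of the hypothesis as producing a genuinely \emph{periodic}, not merely relatively compact, maximal flat. I expect that last point, together with the low-level bookkeeping of lattice-versus-symmetric-space conventions in the final unwinding, to be the only places demanding any real care.
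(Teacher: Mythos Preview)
Your proposal is correct and follows essentially the same route as the paper: pass to a torsion-free $\Gamma\subset\SL_n\BZ$, identify the hypothesis as saying that $(A^{-1}HA)\cap\Gamma$ is cocompact in the maximal $\BR$-split torus $A^{-1}HA$ so that the projection of $A^{-1}H$ is a periodic maximal flat, invoke the well-rounded retract as a soul via \cite[Proposition 6.3]{wr-minimal}, apply Corollary \ref{kor:soul}, and unwind. The only cosmetic difference is that the paper packages the step ``the image of the flat meets $\CX$'' as a separate group-theoretic corollary (Corollary \ref{orbit}) before specializing to $g=A^{-1}$; your direct appeal to Corollary \ref{kor:soul} is the same argument without that detour.
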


Claiming that we obtain a new proof of McMullen's theorem is somewhat misleading: the proof of Theorem \ref{main} follows McMullen's argument closely, and we rely heavily on some of his tools. In some sense, the result of this paper is to confirm in yet another instance that  whenever something is true for $\SL_n\BZ\bs\SL_n\BR/\SO_n$, then it is also true in some form for general locally symmetric spaces of finite volume. 

\begin{bem}
For the sake of simplicity, throughout this note we consider only manifolds, or equivalently torsion-free lattices. The statements in the present note remain true for general finite volume locally symmetric orbifolds if we replace homotopy by what could be poetically baptized as {\em homotopy in the sense of orbifolds} or {\em orbifold homotopy}. For instance, this can be easily seen by passing to a finite manifold cover. 
\end{bem}

We now sketch briefly the proof of Theorem \ref{main}. Let $M$ be an arithmetic locally symmetric manifold and denote by $\pi:S\to M$ its universal cover; $S$ is a symmetric space. It follows from the work of Borel-Serre that $M$ contains a compact set $C$ with the property that $S\setminus\pi^{-1}(C)$ admits a $\pi_1(M)$-equivariant map
$$\tau:S\setminus\pi^{-1}(C)\to\Delta_\BQ(G)$$
 to the rational Tits building $\Delta_\BQ(G)$ of the algebraic group corresponding to $M$. Suppose that $\phi:F\to M$ is homotopic to a periodic maximal flat and that $\phi(F)\cap C=\emptyset$; without loss of generality we may assume that $F$ is a topological torus. Let $\tilde F$ be the universal cover of $F$ and $\tilde\phi:\tilde F\to S\setminus\pi^{-1}(C)$ a lift of $\phi$. For $k=0,\dots,\rank_\BQ(G)-1$ let $\tilde U_k\subset\tilde F$ be the pre-image under the map $\tau\circ\tilde\phi$ of the union of the open $k$-simplices in $\Delta_\BQ(G)$. By construction $\tilde U_k$ is $\pi_1(F)$-invariant; denote by $U_k$ its projection to $F$ and notice that the $U_k$'s cover $F$. Using basic properties of algebraic groups we will show that the rank, as an abelian group, of the image of the homomorphism $H_1(V,\BZ)\to H_1(F,\BZ)$  is bounded from above by $\dim(F)-k-1$ for every connected component $V$ of $U_k$. We deduce from a beautiful result due to McMullen \cite[Theorem 2.1]{McMullen} that such a covering of the torus $F$ cannot exist; this contradicts the assumption that $\phi:F\to M$ is homotopic to a periodic maximal flat and that $\phi(F)\cap C=\emptyset$.
\medskip

We conclude this introduction with a brief plan of the paper. In sections \ref{sec:gkhk} and \ref{sec:lss} we recall some facts on algebraic groups and symmetric spaces. In section \ref{sec:peripheral} we prove Theorem \ref{peripheral}; this result is surely well-known but we include a proof for completeness. After some preparatory work in section \ref{sec:lemmas}, we prove Theorem \ref{main} and Proposition \ref{optimal} in section \ref{sec:proof1}. Finally, in section \ref{sec:final} we derive McMullen's theorem from Theorem \ref{main} and add a few remarks which are hopefully of interest to the reader.
\medskip

Both authors of this note are far from being experts in the geometry of locally symmetric spaces and hence it is likely that some of our arguments are unnecessarily cumbersome; we apologize to the reader in advance. 

Many thanks are due to Mladen Bestvina, Alex Eskin, Philippe Eyssidieux, Benson Farb, Lizhen Ji, Gopal Prasad, Ralf Spatzier and Domingo Toledo for listening to and answering our questions with a patience worthy of Job \cite[5:11]{bible}. To a large extent, this paper was written while the authors were visiting  the University of Toronto and Peking University; we are grateful for the hospitality of these institutions. Finally, we would like to thank Joan Knoertzer, without whom it would have been much more difficult for us to spend time in Toronto and Beijing.

\section{Gut kopiert, halb kapiert}\label{sec:gkhk}
In this section we recall a few definitions and basic facts on algebraic groups. We refer the reader to \cite{Borel-annals,Borel-arit,Borel20,Borel-ag,Borel-li,BHC,BT,Brown,Tits} for details. The authors found the brief survey \cite{Borel20} and the beautiful book \cite{Borel-arit} particularly useful. Lacking sufficient knowledge, we will not aim to cite original sources.

\subsection{Algebraic groups}

In the sequel, $k$ will be some subfield of the complex numbers $\BC$; we will be mostly interested in the cases $k=\BQ,\BR$. Throughout this section, topological notions refer to the Zariski topology. In order to make this explicit we will write Z-closed, Z-connected, etc...

We understand an {\em algebraic group defined over $k$} to be a subgroup $G\subset\SL_m\BC$ which is determined by a collection of polynomial equations with coefficients in $k$ \cite{Borel-ag}. Since $k$ has characteristic $0$, this last condition is equivalent to $G$ being an algebraic variety defined over $k$. If $R$ is a subring of $\BC$ then we denote by $G_R$ the set of points in $G=G_\BC$ with coefficients in $R$. We will be mainly interested in the cases $R=\BZ,\BQ,\BR$. Recall that the set of real points $G_\BR$ of an algebraic group defined over $\BR$ is a Lie group with finitety many connected components; the group $G$, i.e.~the set of complex points, is Z-connected if and only if it is connected in the analytic topology \cite[1.5]{Borel20}.

Observe that an algebraic group $G$ is defined over $k$ if and only if it is invariant, as a set, under the action of the Galois group $\Gal(\BC/k)$ on $\SL_m\BC$ \cite[AG.14.4]{Borel-ag}. In particular, if $G$ is defined over $k$ and $H\subset G_k$ is an arbitrary subset, then the {\em centralizer} and the {\em normalizer} 
$$\CZ_G(H)=\{g\in G\vert gh=hg\ \forall h\in H\}\ \ \ \CN_G(H)=\{g\in G\vert gH=Hg\}$$
of $H$ in $G$ are algebraic groups defined over $k$. Similarly, if $\bar H$ is the subgroup of $G$ generated by $H$, and $\Span(H)$ is the connected component of the identity of the Z-closure of $\bar H$, then $\Span(H)$ is an algebraic group defined over $k$. 

An element $g\in\GL_n\BC$ is {\em semisimple} if it is diagonalizable over $\BC$; $g$ is {\em unipotent} if $(g-\Id)^n=0$. An algebraic group $G$ is {\em unipotent} if it consists of unipotent elements. Every unipotent group is connected and conjugate to a subgroup consisting of upper triangular matrices with ones in the diagonal (see section 4 in \cite{Borel20}). The latter fact implies that every unipotent group is nilpotent as an abstract group; however the converse is not true. An algebraic group $G$ is {\em solvable} if it is solvable as an abstract group. As long as $G$ is connected, $G$ is solvable if and only if it is conjugate to a subgroup of upper triangular matrices \cite[III.10.5]{Borel-ag}. 

The {\em radical} (resp., {\em unipotent radical}) $R(G)$ (resp., $R_u(G)$) of an algebraic group $G$ is the largest connected normal solvable  (unipotent) algebraic subgroup of $G$. If $G$ is defined over $k$ then both $R(G)$ and $R_u(G)$ are also defined over $k$ \cite[0.7]{BT}. The group $G$ is {\em semisimple} (resp., {\em reductive}) if $R(G)=\Id$ (resp., $R_u(G)=\Id$).

A reductive subgroup $H$ of an algebraic group $G$ is a {\em Levi subgroup} if $G$ is the semi-direct product
$$G=H\ltimes R_u(G)$$
of its unipotent radical and $H$. Recall that this means that $R_u(G)$ is normal, $G$ is generated by $H$ and $R_u(G)$, and $H\cap R_u(G)=\Id$.

Since we are working over fields of characteristic $0$, every algebraic group $G$ defined over $k$ contains a Levi-subgroup $H$ defined over $k$; moreover, every reductive $k$-subgroup $H'\subset G$ can be conjugated into $H$ by an element in $R_u(G)_k$ \cite[0.8]{BT}. Observe that $H$ is connected if and only if $G$ is connected.

\subsection{Tori}
A $d$-dimensional {\em algebraic torus}, or simply a torus, is an algebraic group $T$ isomorphic over $\BC$ to the direct product of $d$ copies of $\GL_1\BC\simeq\BC^*$. Equivalently $T$ is a connected, algebraic group consisting of semisimple elements \cite[1.1]{BT}. Before moving on, recall that if $G$ is a reductive algebraic group and $T\subset G$ is a torus, then the centralizer $\CZ_G(T)$ of $T$ in $G$ is reductive \cite[2.15 d)]{BT}.

A torus $T$ is {\em $k$-split} if it is defined over $k$ and isomorphic over $k$ to a product of $\GL_1(\BC)$'s. Equivalently, $T$ is defined and diagonalizable over $k$. We summarize a few properties about $k$-split tori that we will need in the sequel:

\begin{prop}\label{prop:split}
(1) Every connected algebraic subgroup of a $k$-split torus is a $k$-split torus as well.
(2) If $A\subset\GL_mk$ is a collection of commuting elements which are diagonalizable over $k$, then $\Span(A)$ is a $k$-split torus. 
\end{prop}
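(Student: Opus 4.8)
\smallskip
\noindent\emph{Proof strategy.} The plan is to prove (1) first and then deduce (2) from it. For (1): since $T$ is $k$-split it is, by definition, isomorphic over $k$ to a product of copies of $\GL_1\BC$, and fixing such an isomorphism it is no loss of generality to take $T$ to be the torus $D_d$ of diagonal matrices in $\GL_d\BC$, which is defined over $\BQ$. Let $S\subseteq D_d$ be a connected algebraic subgroup. Every element of $S$ lies in $D_d$ and is therefore semisimple, and $S$ is connected, so $S$ is a torus by the characterization of tori recalled above. What remains is to check that this torus is $k$-split --- and in particular that it is already defined over $k$, which is not part of the hypothesis.

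For that point I would invoke the classical description of the closed subgroups of the split torus $D_d$ in terms of its character lattice, which is $\BZ^d$: the subgroup $S$ equals the common kernel $\{t\in D_d:\chi(t)=1\text{ for all }\chi\in M\}$ of the subgroup $M\subseteq\BZ^d$ of characters trivial on $S$, and $S$ is connected exactly when $M$ is saturated, i.e.\ $\BZ^d/M$ is free. The equations $\chi(t)=1$ cutting out $S$ are monomial relations $t_1^{a_1}\cdots t_d^{a_d}=1$ with integer exponents, so $S$ is defined over $\BQ$; and picking a basis of $\BZ^d$ adapted to the saturated sublattice $M$ exhibits $S$ as a coordinate subtorus of $D_d$, hence $\BQ$-isomorphic, and so $k$-isomorphic, to a product of copies of $\GL_1\BC$. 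Thus $S$ is a $k$-split torus. This character-lattice bookkeeping is essentially the only content of (1); everything else is formal, and it is the step I expect to be the (mild) obstacle, in that it is the place where one must quote, or reprove, the structure theory of diagonalizable groups.

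For (2), I would first observe that the elements of $A$ are pairwise commuting elements of $\GL_mk$ each of which is diagonalizable over $k$, and that such a family is \emph{simultaneously} diagonalizable over $k$: decomposing $k^m$ into the simultaneous eigenspaces of the elements of $A$ yields a decomposition into $k$-subspaces, since all the eigenvalues involved lie in $k$. Hence there is $g\in\GL_mk$ with $gAg^{-1}\subseteq D_m(k)$, where $D_m$ is the diagonal torus. The subgroup generated by $gAg^{-1}$ is then contained in the Zariski-closed set $D_m$, so its Zariski closure lies in $D_m$, and therefore the identity component of that closure --- which is exactly $g\,\Span(A)\,g^{-1}$ --- is a connected algebraic subgroup of the $k$-split torus $D_m$. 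By part (1) it is a $k$-split torus, and conjugating back by $g^{-1}\in\GL_mk$ shows that $\Span(A)$ is a $k$-split torus as well; that $\Span(A)$ is at least defined over $k$ was already noted when $\Span$ was introduced.
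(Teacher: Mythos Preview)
Your proof is correct and follows essentially the same approach as the paper, which simply cites \cite[1.6]{BT} for part (1) and the simultaneous diagonalizability of commuting semisimple elements \cite[I.4.6]{Borel-ag} for part (2); you have unpacked exactly what those references contain, namely the character-lattice description of closed subgroups of a split torus for (1), and simultaneous diagonalization over $k$ followed by an application of (1) for (2).
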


Recall that $\Span(A)$ is the identity component of the Zariski closure of the group generated by $A$. See \cite[1.6]{BT} for the first claim of Proposition \ref{prop:split}. The second claim follows from the fact that commuting semisimple elements can be simultanously diagonalized \cite[I.4.6]{Borel-ag}.

Suppose now that $G$ is a connected reductive algebraic group defined over $k$. A $k$-split torus in $G$ is {\em maximal} if it is not properly contained in any other $k$-split torus. It is known that any two maximal $k$-split tori in $G$ are conjugate in $G$ \cite[4.21]{BT} and hence have the same dimension. The {\em $k$-rank} of $G$, denoted by $\rank_k(G)$, is the dimension of some, and hence every, maximal $k$-split torus in $G$.

\subsection{Parabolic subgroups and the Tits building of $G$}
Suppose now that $G$ is a connected reductive algebraic group defined over $k$ and recall that a {\em Borel subgroup} $B$ of $G$ is a maximal connected solvable algebraic subgroup. Borel subgroups exist and are all conjugate to each other as subgroups of $G$. A proper Z-closed subgroup $P\subset G$ is {\em parabolic} if it contains some Borel subgroup. Equivalently, the algebraic variety $G/P$ is projective \cite[Section 16]{Borel-annals}. Every parabolic subgroup is connected \cite[4.2]{BT}. Observe that no Borel subgroup of $G$ needs to be defined over $k$; in fact $G$ contains a parabolic subgroup defined over $k$ if and only if it contains a non-central $k$-split torus \cite[4.17]{BT}.

The following fact will play a central role in our arguments below.

\begin{sat}\label{parabolic-levy}
Suppose that $P\subset G$ is a parabolic $k$-subgroup. Then there is a $k$-split torus $S\subset P$ whose centralizer $\CZ_G(S)$ in $G$ is a Levi-subgroup of $P$.
\end{sat}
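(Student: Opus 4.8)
The plan is to build the torus $S$ from the combinatorial data of the parabolic $P$ relative to a chosen maximal $k$-split torus, using the standard structure theory of relative root systems. First I would fix a maximal $k$-split torus $T\subset G$ that is contained in $P$; such a $T$ exists because the $k$-split part of the radical of $P$ is a non-central $k$-split torus, so after conjugating we may assume $T$ is a maximal $k$-split torus of $G$ lying in $P$. Let $\Phi=\Phi_k(G,T)$ be the system of $k$-roots, and pick an ordering (a choice of positive system) so that $P$ contains the Borel-type minimal parabolic $P_0$ associated to $T$ and that ordering; this is possible since every parabolic $k$-subgroup containing $T$ is conjugate under $N_G(T)_k$ to a standard one, i.e.\ to $P_\theta$ for a unique subset $\theta$ of the simple $k$-roots $\Delta$.

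Next I would take $S$ to be the identity component of the intersection $\bigcap_{\alpha\in\theta}\ker(\alpha)\subset T$, or equivalently the connected component of the common kernel of the simple roots in $\theta$ regarded as characters of $T$. By Proposition \ref{prop:split}(1) this is again a $k$-split torus, and it is central in the standard Levi factor $L_\theta$ of $P_\theta=P$. The key computation is then to identify $\CZ_G(S)$. On the one hand $L_\theta\subseteq\CZ_G(S)$ since $S$ is central in $L_\theta$. On the other hand, $\CZ_G(S)$ is generated by $T$ together with all root groups $U_\beta$ for $k$-roots $\beta$ that vanish on $S$; and a root $\beta$ vanishes on $S$ precisely when $\beta$ lies in the subspace spanned by $\theta$, i.e.\ when $\beta\in\Phi_\theta=\Phi\cap\Span_\BZ(\theta)$. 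These are exactly the roots whose root groups generate $L_\theta$ together with $T$. Hence $\CZ_G(S)=L_\theta$, which is by construction a Levi subgroup of $P=P_\theta$. Since everything in sight ($T$, the root groups, $L_\theta$, the subset $\theta$) is defined over $k$, the torus $S$ is defined over $k$ and the equality $\CZ_G(S)=L_\theta$ holds as $k$-groups.

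The main obstacle, and the step requiring the most care, is the identification of $\CZ_G(S)$ with the span of the root groups killed by $S$: one needs the Bruhat-type decomposition of $\CZ_G(S)$ relative to $T$ and the fact that for a $k$-split torus $S\subset T$ the centralizer $\CZ_G(S)$ is generated by $T$ and the $U_\beta$ with $\beta|_S$ trivial. This is a standard consequence of the relative root space decomposition of the Lie algebra $\FG = \CZ_\FG(T)\oplus\bigoplus_{\beta\in\Phi}\FG_\beta$, together with the fact that $\CZ_G(S)$ is generated by $T$, $\CZ_G(T)$ and those $U_\beta$ with $\FG_\beta\subset\CZ_\FG(S)$; one also invokes that $\CZ_G(S)$ is reductive (as noted in the excerpt, centralizers of tori in reductive groups are reductive) so that it is indeed determined by its root datum with respect to $T$. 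I would cite \cite{BT} for the structure of $\CZ_G(S)$ and for the classification of parabolic $k$-subgroups by subsets of $\Delta$, and conclude that the torus $S$ so constructed has the required properties.
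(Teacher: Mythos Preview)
The paper does not supply its own proof of this statement: it is recorded in section~\ref{sec:gkhk} as a background fact from Borel--Tits, with the remark ``See \cite[4.15 b]{BT} for a much more precise statement of Theorem~\ref{parabolic-levy}.'' Your sketch is precisely the standard Borel--Tits construction underlying that citation: pass to a standard parabolic $P_\theta$ relative to a maximal $k$-split torus $T\subset P$, take $S=\big(\bigcap_{\alpha\in\theta}\ker\alpha\big)^0$, and identify $\CZ_G(S)$ with the standard Levi $L_\theta$ via the relative root decomposition. So your approach and the paper's (implicit) approach coincide.

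One small point of presentation: your justification for the existence of a maximal $k$-split torus of $G$ inside $P$ (``the $k$-split part of the radical of $P$ is a non-central $k$-split torus'') is a little circuitous and comes close to assuming what you want. The cleaner route is that every parabolic $k$-subgroup contains a minimal one, and any minimal parabolic $k$-subgroup contains $\CZ_G(T)$ for some maximal $k$-split torus $T$ of $G$; hence $T\subset P$. This is the order in which \cite{BT} develops things and avoids any appearance of circularity.
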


Let $\CP$ be the set of all (proper) maximal parabolic $k$-subgroups. The {\em $k$-Tits building} associated to $G$ is the simplicial complex $\Delta_k(G)$ whose vertices are the elements in $\CP$ where $\{P_0,\dots,P_r\}\subset\CP$ determines a $r$-simplex if $P_0\cap\dots\cap P_r$ is parabolic. If $k=\BQ$, we will refer to $\Delta_\BQ(G)$ as the {\em rational Tits building} of $G$.

\begin{sat}\label{facts-tits}
The Tits building $\Delta_k(G)$ of $G$ is a spherical building of dimension $\rank_k(G)-1$. The action by conjugation of $G_k$ on the set of maximal parabolic subgroups induces a simplicial action on $\Delta_k(G)$. The stabilizer of an $r$-simplex $\{P_0,\dots,P_r\}$ in $\Delta_k(G)$ is the intersection of $G_k$ with the parabolic subgroup 
$$P=P_0\cap\dots\cap P_r$$ 
Moreover, the $k$-split torus $S\subset P$ provided by Theorem \ref{parabolic-levy} has dimension $r+1$.
\end{sat}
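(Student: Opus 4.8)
The plan is to recognize $\Delta_k(G)$ as the spherical building attached to the Tits system that Borel--Tits put on $G_k$, and then to read all four assertions off the standard dictionary between Tits systems and buildings, the only supplementary input being that parabolic subgroups are self-normalizing. Concretely, I would fix a maximal $k$-split torus $T\subset G$, a minimal parabolic $k$-subgroup $P_0\supseteq\CZ_G(T)$, and set $N=\CN_G(T)$; by \cite{BT}, $(G_k,\,P_0\cap G_k,\,N\cap G_k,\,\Delta)$ is then a Tits system whose Weyl group is the \emph{finite} group ${}_kW=\CN_G(T)/\CZ_G(T)$ and whose distinguished generators are indexed by the set $\Delta$ of simple relative roots. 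Moreover (again \cite{BT}) the parabolic subgroups of this Tits system are exactly the intersections $Q\cap G_k$ with $Q$ a parabolic $k$-subgroup of $G$, the map $Q\mapsto Q\cap G_k$ being an isomorphism of posets (for the inclusion order) onto them, and any two minimal parabolic $k$-subgroups of $G$ are $G_k$-conjugate.

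The building of this Tits system is by definition the flag complex of the poset of proper parabolic subgroups under reverse inclusion, which by the previous paragraph is the poset of proper parabolic $k$-subgroups of $G$ under reverse inclusion, with maximal elements exactly $\CP$. To identify this flag complex with the simplicial complex $\Delta_k(G)$ of the statement, it is enough to observe that every proper parabolic $k$-subgroup is the intersection of the members of $\CP$ containing it: conjugating into standard position, $P=P_\Theta$ for some $\Theta\subsetneq\Delta$, and $P_\Theta=\bigcap_{\alpha\in\Delta\setminus\Theta}P_{\Delta\setminus\{\alpha\}}$ with every factor in $\CP$; hence a subset $\{P_0,\dots,P_r\}\subseteq\CP$ is a simplex of the flag complex precisely when $P_0\cap\dots\cap P_r$ is parabolic, which is the defining rule for $\Delta_k(G)$. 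Since ${}_kW$ is finite the building is spherical, and its rank is $\#\Delta$; for semisimple $G$ --- the only case needed in this paper --- $\#\Delta=\rank_k(G)$, so $\dim\Delta_k(G)=\rank_k(G)-1$ (in general $\#\Delta$ is the relative semisimple rank, which is the quantity that should be read for $\rank_k(G)$ when $G$ carries a central $k$-split torus).

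For the action: conjugation by $G_k$ takes parabolic $k$-subgroups to parabolic $k$-subgroups and preserves inclusion, so it induces a simplicial action on $\Delta_k(G)$. For the stabilizer of an $r$-simplex $\sigma=\{P_0,\dots,P_r\}$, put $P=P_0\cap\dots\cap P_r$. If $g\in G_k$ stabilizes $\sigma$ then it permutes the $P_i$, so $gPg^{-1}=\bigcap_i gP_ig^{-1}=\bigcap_i P_i=P$, and as a parabolic subgroup equals its own normalizer \cite{Borel-ag} this forces $g\in P\cap G_k$; conversely, if $g\in P=\bigcap_i P_i$ then $g\in P_i$ for each $i$ and $gP_ig^{-1}=P_i$ since $P_i$ is a subgroup, so $g$ fixes each vertex of $\sigma$. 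Thus $\Stab_{G_k}(\sigma)=P\cap G_k$ (and acts trivially on $\sigma$).

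For the dimension of $S$: using that $G_k$ is transitive on chambers of $\Delta_k(G)$ --- these being the minimal parabolic $k$-subgroups --- and that every simplex is a face of a chamber, we may assume $\sigma=\{P_{\Delta\setminus\{\alpha_0\}},\dots,P_{\Delta\setminus\{\alpha_r\}}\}$, so $P=P_\Theta$ with $\Theta=\Delta\setminus\{\alpha_0,\dots,\alpha_r\}$ and $\#\Theta=\rank_k(G)-(r+1)$. The Levi subgroup of $P_\Theta$ furnished by Theorem \ref{parabolic-levy} is $L_\Theta=\CZ_G(S_\Theta)$ with $S_\Theta=\bigl(\bigcap_{\alpha\in\Theta}\Ker(\alpha|_T)\bigr)^0$ the maximal $k$-split torus in the centre of $L_\Theta$; since the simple relative roots form a basis of $X^{*}(T)\otimes\BQ$ when $G$ is semisimple, the $\#\Theta$ characters $\alpha|_T$, $\alpha\in\Theta$, are independent, so $\bigcap_{\alpha\in\Theta}\Ker(\alpha|_T)$ has dimension $\dim T-\#\Theta=\rank_k(G)-\#\Theta$, whence $\dim S=\dim S_\Theta=r+1$. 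The genuinely non-formal inputs are the Borel--Tits structure theory invoked in the first paragraph and the self-normalizing property of parabolics; the step I would watch most carefully is the identification in the second paragraph, which hinges on the (elementary but essential) fact that every proper parabolic is an intersection of maximal ones.
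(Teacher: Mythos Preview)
Your argument is correct and follows essentially the same route as the paper, which does not give a detailed proof but merely records that the theorem follows from two facts in \cite{BT}: every parabolic $k$-subgroup equals its own normalizer, and every parabolic $k$-subgroup is $G_k$-conjugate to a unique standard one. Your write-up is a careful unpacking of exactly these two inputs --- the self-normalizing property drives your stabilizer computation, and the standard-parabolic classification underlies both your identification of $\Delta_k(G)$ with the Tits-system building and your dimension count for $S_\Theta$ --- so there is no substantive difference in approach.
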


For facts on parabolic subgroups see sections 4 and 5 in \cite{BT}. For instance, Theorem \ref{facts-tits} follows from the fact that every parabolic $k$-subgroup is (1) equal to its normalizer \cite[4.4 a]{BT} and (2) conjugate by an element in $G_k$ to a unique {\em standard parabolic subgroup} \cite[5.14]{BT}. See \cite[4.15 b]{BT} for a much more precise statement of Theorem \ref{parabolic-levy}.

\begin{bem}
To conclude we would like to observe that Theorem \ref{facts-tits} fails if $G$ is not connected. In fact, if $G$ is the group generated by $\SL_2\BR\times\SL_2\BR$ and the involution mapping the first and second vectors of the standard basis of $\BR^4$ to the third and fourth, then the normalizer of a Borel subgroup $B$ of $G$ properly contains $B$. It is for this reason that we will define {\em arithmetic} in a rather restrictive way below. This is simply a question of terminology: as we will see below, the results in this paper hold for all locally symmetric spaces, arithmetic or not.
\end{bem}

\section{Locally symmetric spaces}\label{sec:lss}
In this section we review some facts about symmetric spaces, locally symmetric manifolds, and arithmetic groups. See \cite{Borel-arit,Eberlein,Ji-li,Mostow,Witte-Morris} for more on these topics.

\subsection{Symmetric spaces}\label{sec-bla1}
Let $S$ be a (simply connected) Riemannian symmetric space of non-positive curvature. The symmetric space $S$ is {\em irreducible} if it is not isometric to the Riemannian product of two lower-dimensional symmetric spaces; otherwise it is {\em reducible}. It is due to de Rham that every symmetric space $S$ admits a canonical decomposition
\begin{equation}\label{de Rham}
S=S_1\times\dots\times S_s\times\BR^k
\end{equation}
as a product of irreducible symmetric spaces and a euclidean factor. 

The identity component $\Isom(S)^0$ of the isometry group $\Isom(S)$ preserves not only the factors of the de Rham decomposition \eqref{de Rham} but also their order. Recall that $\Isom(S)^0$ has finite index in $\Isom(S)$.

\begin{quote}{\bf Terminology:} By a {\em symmetric space} we mean from now on a symmetric space of non-positive curvature without euclidean factor in its de Rham decomposition.
\end{quote}

An irreducible symmetric space $S$ admits a unique $\Isom(S)^0$-invariant metric up to scaling. However, if $S$ is reducible then the factors in the de Rham decomposition can be scaled by different positive real numbers. Since all these metrics share the same basic properties we will say that they are {\em equivalent} and will not distinguish between them.

It is well-known that for every symmetric space $S$ there is some connected semisimple algebraic group $G$ defined over $\BR$ such that 
\begin{equation}\label{sym-g/k}
S=G_\BR^0/K
\end{equation}
where $K$ is a maximal compact subgroup of $G_\BR^0$, the identity component of the Lie group of real points of $G$. Under the identification \eqref{sym-g/k} the left action of $G_\BR^0$ on $S$ is by isometries. 

If $G'$ is a second semisimple algebraic group and $K'\subset {G'}_\BR^0$ a maximal compact subgroup such that $S={G'}_\BR^0/K'$, then both groups $G$ and $G'$ have the same $\BR$-rank. The {\em rank} of the symmetric space $S$ is then by definition
$$\rank_\BR(S)=\rank_\BR(G)$$
Recall that the symmetric space $S$ has $\rank_\BR(S)=1$ if and only if it is negatively curved.

\subsection{Maximal flats and $\BR$-split tori}
A {\em flat} in the symmetric space $S=G_\BR^0/K$ is a totally geodesic complete locally euclidean submanifold. Since $S$ has non-positive curvature, every flat is simply connected and hence isometric to the euclidean space of the same dimension. A flat is {\em maximal} if it is not properly contained in any other flat. It is known that any two maximal flats are translates of each other under the isometric action $G_\BR^0\actson S$. Their common dimension coincides with the rank of the symmetric space.

Before moving on to more interesting topics we remind the reader of the following fact:

\begin{prop}\label{flat-product}
Let $S$ be a symmetric space with de Rham decomposition $S=S_1\times\dots\times S_s$. Then for every maximal flat $F\subset S$ there are maximal flats $F_1\subset S_1,\dots,F_s\subset S_s$ with $F=F_1\times\dots\times F_s$. Conversely, the product of maximal flats in $S_1,\dots,S_s$ is a maximal flat in $S$.
\end{prop}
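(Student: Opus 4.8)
The plan is to exploit the fact that both the de Rham decomposition and maximal flats are governed by the Lie-theoretic structure of $G = \Isom(S)^0$, so that everything reduces to the analogous statement about maximal $\BR$-split tori (or maximal abelian subalgebras of the symmetric part of the Lie algebra) respecting a direct product decomposition of a semisimple group. First I would record that if $S = S_1 \times \dots \times S_s$ is the de Rham decomposition, then $G = G_\BR^0$ splits as an almost direct product $G_1 \times \dots \times G_s$ with $S_i = (G_i)_\BR^0 / K_i$ and $K = K_1 \cdots K_s$; this is standard and follows from the fact that $\Isom(S)^0$ preserves the factors of \eqref{de Rham} together with their order, as noted in Section \ref{sec-bla1}. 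Passing to the Cartan decomposition $\FG = \FG_1 \oplus \dots \oplus \FG_s$ and $\mathfrak p = \mathfrak p_1 \oplus \dots \oplus \mathfrak p_s$, a maximal flat through the basepoint is exactly $\exp(\FA) \cdot K$ for a maximal abelian subalgebra $\FA \subset \mathfrak p$.

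The key step is then: every maximal abelian subalgebra $\FA \subset \mathfrak p = \bigoplus_i \mathfrak p_i$ is of the form $\FA = \bigoplus_i \FA_i$ with each $\FA_i \subset \mathfrak p_i$ a maximal abelian subalgebra. Indeed, the bracket respects the direct sum, so the projections $\FA_i$ of $\FA$ to $\mathfrak p_i$ are abelian; since $\FA$ is abelian and the summands commute, $\FA$ is contained in the abelian subalgebra $\bigoplus_i \FA_i$, and maximality of $\FA$ forces equality, whence maximality of each $\FA_i$ (any larger abelian subalgebra in some $\mathfrak p_j$ would enlarge $\FA$). Exponentiating and translating by $G$, and using that any two maximal flats are $G$-translates of each other (so it suffices to treat one maximal flat through the basepoint and then move it by an element of $G = G_1 \cdots G_s$, which respects the product structure of $S$), gives $F = F_1 \times \dots \times F_s$. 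The converse is immediate: a product of maximal abelian subalgebras $\FA_i \subset \mathfrak p_i$ is abelian in $\mathfrak p$, and it is maximal because any abelian subalgebra of $\mathfrak p$ containing it projects to an abelian subalgebra of each $\mathfrak p_i$ containing $\FA_i$, hence equal to it; translating, the product of maximal flats is a maximal flat, and its dimension is $\sum_i \rank_\BR(S_i) = \rank_\BR(S)$.

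The only mild obstacle is bookkeeping at the level of groups versus Lie algebras: $G$ need only be an \emph{almost} direct product of the $G_i$, so the flats $F_i$ should be defined intrinsically inside $S_i$ (as $\exp(\FA_i)$-orbits) rather than by cutting $F$ with $S_i$, and one must check the identification $S = \prod_i S_i$ carries $\prod_i F_i$ to $F$ on the nose — but this is exactly the content of the de Rham splitting being isometric, so no real difficulty arises. I would keep the argument short, essentially citing the Lie-theoretic description of maximal flats and the compatibility of the de Rham decomposition with the splitting of $\FG$ and $\mathfrak p$.
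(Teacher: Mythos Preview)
The paper does not actually prove Proposition \ref{flat-product}; it is stated as a reminder of a well-known fact (``Before moving on to more interesting topics we remind the reader of the following fact'') and no argument is given. Your Lie-theoretic plan is correct and is the standard way to justify this: reduce to a maximal flat through the basepoint, identify it with $\exp(\FA)\cdot K$ for a maximal abelian $\FA\subset\mathfrak p$, and use that the splitting $\mathfrak p=\bigoplus_i\mathfrak p_i$ into commuting ideals forces any maximal abelian subalgebra to decompose as $\bigoplus_i\FA_i$ with each $\FA_i$ maximal abelian in $\mathfrak p_i$. The projection-and-maximality argument you give for this key step is clean and correct, as is the converse. The almost-direct-product caveat you raise is handled exactly as you say, by working at the Lie algebra level and then exponentiating inside each $S_i$; since the de Rham splitting is an honest Riemannian product, the identification of $\prod_i F_i$ with $F$ is immediate.
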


Continuing with the same notation, suppose that $A\subset G$ is a maximal $\BR$-split torus. The action of the group $A_\BR^0$ on $S=G_\BR^0/K$ preserves a maximal flat $F\subset S$. In fact, the actions of $A_\BR^0$ on $F$ and of $\BR^{\rank_\BR(S)}$ on itself by translations are conjugate by an isometry $F\to\BR^{\rank_\BR(S)}$. Conversely, every maximal flat arises in this way. More precisely we have:

\begin{prop}\label{R-split-flat}
Let $G$ be a semisimple algebraic group defined over $\BR$, $K\subset G_\BR^0$ a maximal compact subgroup of the group of real points of $G$, and $F$ a maximal flat of $S=G_\BR^0/K$. Then there is a unique maximal $\BR$-split torus $A$ such that $A_\BR^0$ acts simply transitively on $F$. Moreover, this map from the set of maximal flat submanifolds to the set of maximal $\BR$-split tori is a bijection.
\end{prop}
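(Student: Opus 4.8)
The plan is to establish both directions---every maximal $\BR$-split torus gives rise to a maximal flat on which $A_\BR^0$ acts simply transitively, and every maximal flat arises this way from a \emph{unique} torus---and then observe that these constructions are mutually inverse. First I would fix a maximal flat $F_0\subset S$ through the basepoint $eK$. The tangent space $\mathfrak a=T_{eK}F_0$ is a maximal abelian subspace of the $(-1)$-eigenspace $\mathfrak p$ of the Cartan involution on $\mathfrak g=\operatorname{Lie}(G_\BR^0)$; the subgroup $A_0=\exp(\mathfrak a)$ is the identity component of the real points of a maximal $\BR$-split torus $A\subset G$, and standard structure theory of symmetric spaces gives that $\exp:\mathfrak a\to F_0$, $X\mapsto \exp(X)\cdot eK$, is an isometry onto $F_0$ conjugating the translation action of $\mathfrak a\simeq\BR^{\rank_\BR(S)}$ to the $A_0$-action. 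In particular $A_0$ acts simply transitively on $F_0$. Since any two maximal flats are translates under $G_\BR^0$, and conjugating $A$ by $g\in G_\BR^0$ produces another maximal $\BR$-split torus, this shows the map ``$\mathbb{R}$-split torus $\mapsto$ flat'' is well defined and surjective onto the set of maximal flats.

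Next I would address injectivity, which amounts to the uniqueness clause: if $A$ and $A'$ are maximal $\BR$-split tori with $A_\BR^0$ and ${A'}_\BR^0$ both acting simply transitively (hence preserving) the same flat $F$, then $A=A'$. The key point is that the subgroup of $G_\BR^0$ preserving $F$ and acting by translations on it is exactly $A_\BR^0$ --- more precisely, $A_\BR^0 = \{ g\in G_\BR^0 : g\cdot F = F \text{ and } g|_F \text{ is a translation}\}$. Given this, both ${A'}_\BR^0\subseteq A_\BR^0$ and, by symmetry, $A_\BR^0\subseteq{A'}_\BR^0$, so $A_\BR^0={A'}_\BR^0$; since an $\BR$-split torus is connected and equals the Zariski closure of the identity component of its real points (real points are Zariski dense in a connected algebraic group over an infinite field), $A=A'$. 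To prove the displayed description of $A_\BR^0$, I would use that $A_\BR^0$ is the connected component of $\CZ_{G_\BR^0}(A_\BR^0)$ modulo its maximal compact factor: the centralizer of a maximal $\BR$-split torus is reductive with compact derived group (in the semisimple case this is the standard fact that $\CZ_G(A) = A\cdot M$ with $M$ anisotropic), so an isometry of $S$ commuting with all of $A_\BR^0$ and fixing $F$ setwise acting as a translation must lie in $A_\BR^0$; conversely any element preserving $F$ and acting by translations commutes with the translations in $A_0$ along $F$, and a curvature/holonomy argument confines it to $A_\BR^0$.

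Finally I would check that the two maps are mutually inverse: starting from $A$, forming $F=F(A)$, and recovering the torus from $F$ returns $A$ by the description just established; starting from $F$, the torus $A(F)$ we extract has $A(F)_\BR^0$ acting simply transitively on $F$, and $F(A(F))$ is the unique flat preserved by $A(F)_\BR^0$ with simply transitive action, which is $F$ itself. Hence the correspondence is a bijection. I expect the main obstacle to be the precise identification of $A_\BR^0$ as the full stabilizer-acting-by-translations of $F$ in $G_\BR^0$; this requires care about possible extra isometries (coming from a compact factor of the centralizer, or from elements of $\Isom(S)$ outside $\Isom(S)^0$), and it is here that one genuinely uses semisimplicity and the absence of a Euclidean factor, together with the structure of $\CZ_G(A)$ for a maximal $\BR$-split torus $A$.
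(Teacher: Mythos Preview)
The paper does not actually prove Proposition~\ref{R-split-flat}; it is stated as a standard background fact (``we remind the reader of the following fact'') and is immediately followed by Proposition~\ref{split-flat}, whose claims are attributed to \cite{Mostow}. So there is no proof in the paper to compare against; your outline is essentially the textbook argument via the Cartan decomposition and the structure of the centralizer of a maximal $\BR$-split torus.

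That said, there is one imprecision in your uniqueness step. You assert that
\[
A_\BR^0=\{g\in G_\BR^0 : gF=F\text{ and }g|_F\text{ is a translation}\},
\]
but this is not quite right: the compact factor $M$ of $\CZ_{G_\BR^0}(A_\BR^0)=M\times A_\BR^0$ (cf.\ Proposition~\ref{split-flat}) lies in $K$, fixes the basepoint, and commutes with $A_\BR^0$, so $M$ fixes $F$ pointwise. Thus every element of $M$ also preserves $F$ and acts as a translation (namely the identity), yet $M\not\subset A_\BR^0$ in general. What your argument actually gives is ${A'}_\BR^0\subset\CZ_{G_\BR^0}(A_\BR^0)=M\times A_\BR^0$. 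This is enough: ${A'}_\BR^0\cong(\BR_{>0})^r$ admits no nontrivial continuous homomorphism to a compact group, so the projection of ${A'}_\BR^0$ to $M$ is trivial and hence ${A'}_\BR^0\subset A_\BR^0$; by symmetry and equality of dimensions, $A_\BR^0={A'}_\BR^0$, and then $A=A'$ by Zariski density. With this correction your proof goes through.
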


Given a maximal flat $F\subset S=G_\BR^0/K$ and the associated maximal $\BR$-split torus $A\subset G$ denote by $\CZ_{G_\BR^0}(A_\BR^0)$ and $\CN_{G_\BR^0}(A_\BR^0)$ the centralizer and normalizer of $A_\BR^0$ in $G_\BR^0$. Let also 
$$\Stab_{G_\BR^0}(F)=\{g\in G_\BR^0\vert gF=F\}$$
be the stabilizer of $F$ under the action $G_\BR^0\actson S$. We remind the reader of the relation between these groups:

\begin{prop}\label{split-flat}
Let $G$ be a semisimple algebraic group defined over $\BR$, $K\subset G_\BR^0$ a maximal compact subgroup of the identity component of the group of real points of $G$, and $F$ a maximal flat of $S=G_\BR^0/K$ with associated maximal $\BR$-split torus $A$. Then
\begin{itemize} 
\item $\Stab_{G_\BR^0}(F)=\CN_{G_\BR^0}(A_\BR^0)$.
\item $\CZ_{G_\BR^0}(A_\BR^0)$ has finite index in $\CN_{G_\BR^0}(A_\BR^0)$.
\item Every element in $\CZ_{G_\BR^0}(A_\BR^0)$ is semisimple.
\item $\CZ_{G_\BR^0}(A_\BR^0)$ is the direct product of a compact group and $A_\BR^0$. The projection
\begin{equation}\label{polproj}
\pol:\CZ_{G_\BR^0}(A_\BR^0)\to A_\BR^0
\end{equation}
given by this splitting associates to every $g\in\CZ_{G_\BR^0}(A_\BR^0)$ its polar part $\pol(g)$.
\item The action of $\CZ_{G_\BR^0}(A_\BR^0)$ on $F$ factors through the projection \eqref{polproj}.
\item If $H\subset G$ is an algebraic subgroup defined over $\BR$ containing $g\in\CZ_{G_\BR^0}(A_\BR^0)$ then $H$ also contains the image of $g$ under the projection \eqref{polproj}.
\end{itemize}
\end{prop}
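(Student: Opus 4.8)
The plan is to work with the Lie algebra decomposition of $G_\BR^0$ associated to the maximal $\BR$-split torus $A$, and to reconstruct all six bullet points from the standard restricted root space picture. First I would fix the identification $S=G_\BR^0/K$ coming from a Cartan involution $\theta$ whose fixed point set is $K$; write $\FG=\FN\oplus\FG_0$ as the reductive decomposition, and decompose $\FG$ under the adjoint action of the Lie algebra $\FA$ of $A_\BR^0$ into restricted root spaces $\FG=\FG_0\oplus\bigoplus_{\alpha}\FG_\alpha$, where $\FG_0=\CZ_\FG(\FA)$. The point $o=eK\in S$ corresponding to $K$ can be chosen so that $\theta$ is the geodesic symmetry at $o$ and $F=A_\BR^0\cdot o$ is the maximal flat; the simple transitivity of $A_\BR^0$ on $F$ (Proposition \ref{R-split-flat}) and $\dim A_\BR^0=\rank_\BR(S)=\dim F$ are what we start from.

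The first bullet, $\Stab_{G_\BR^0}(F)=\CN_{G_\BR^0}(A_\BR^0)$: if $g$ normalizes $A_\BR^0$ then it permutes the $A_\BR^0$-orbits, and since $F$ is the unique maximal flat through $o$ that is an $A_\BR^0$-orbit, $g$ either fixes $F$ or moves it; composing with an element of $A_\BR^0$ we may assume $go=o$, i.e.\ $g\in K$, and then $gF$ is again a maximal flat which is an $A_\BR^0$-orbit through $o$, forcing $gF=F$ (here one uses that $gA_\BR^0g^{-1}=A_\BR^0$ and uniqueness in Proposition \ref{R-split-flat}, applied to the group $gAg^{-1}=A$). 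Conversely if $gF=F$, then $gA_\BR^0g^{-1}$ is a maximal $\BR$-split torus acting simply transitively on $F$, so by the uniqueness clause of Proposition \ref{R-split-flat} we get $gA_\BR^0g^{-1}=A_\BR^0$, i.e.\ $g\in\CN_{G_\BR^0}(A_\BR^0)$. For the second bullet, $\CN_{G_\BR^0}(A_\BR^0)/\CZ_{G_\BR^0}(A_\BR^0)$ embeds into $\Aut(A_\BR^0)$, and since $A$ is an algebraic torus the action on its cocharacter lattice identifies this quotient with a subgroup of the finite Weyl group $W=\CN_G(A)/\CZ_G(A)$; hence it is finite.

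The heart of the matter is the third and fourth bullets, which describe $\CZ_{G_\BR^0}(A_\BR^0)$. By the remark following Proposition \ref{prop:split} (commuting semisimple elements simultaneously diagonalize over $\BR$) and maximality of $A$, the centralizer $Z:=\CZ_G(A)$ is reductive (as noted in \S2.2, centralizers of tori in reductive groups are reductive), its own maximal $\BR$-split central torus is exactly $A$, and its semisimple part is $\BR$-anisotropic, i.e.\ the associated symmetric space of $[Z,Z]_\BR^0$ is a point. A connected semisimple real group whose symmetric space is a point is compact; so $Z_\BR^0=\CZ_{G_\BR^0}(A_\BR^0)$ is, up to finite cover, the almost-direct product of $A_\BR^0$ with a compact group. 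To get an honest direct product (fourth bullet) one argues that $A_\BR^0$, being a connected $\BR$-split torus, has trivial intersection with any compact subgroup of $G_\BR^0$ (a nontrivial element of $A_\BR^0$ is a noncompact one-parameter subgroup, being $\theta(a)=a^{-1}\ne a$), and that the product map is then an isomorphism onto $Z_\BR^0$ because the two factors commute, intersect trivially, and together account for the full dimension and component group — here the cleanest route is to use that the Cartan decomposition of $Z_\BR^0$ relative to $\theta$ splits it as $(\text{compact part})\times A_\BR^0$, with $A_\BR^0=\exp(\FA)$ the analytic subgroup for the $(-1)$-eigenspace of $\theta$ inside $\FG_0$, which is precisely $\FA$ by maximality of $A$. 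The projection $\pol$ is then projection onto the second factor; that all of $Z_\BR^0$ consists of semisimple elements is immediate since a compact-times-split-torus group has semisimple elements only.

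The fifth bullet — the action of $\CZ_{G_\BR^0}(A_\BR^0)$ on $F$ factors through $\pol$ — follows because the compact factor fixes $o$ (it lies in a maximal compact subgroup, which we may take to be $K$, as any compact subgroup commuting with $A_\BR^0$ fixes the flat pointwise: it preserves $F$ by bullet one, acts on $F\cong\BR^r$ by isometries commuting with the translation action of $A_\BR^0$, hence by translations, but being compact it acts trivially), so the action on $F$ depends only on the $A_\BR^0$-component. The sixth bullet is a purely algebraic statement: if an $\BR$-subgroup $H$ contains $g\in Z_\BR^0$, then it contains the Zariski closure of the cyclic group $\langle g\rangle$, hence its identity component $\Span(g)$, which is an $\BR$-split torus by Proposition \ref{prop:split}(2) once we note $g$ is diagonalizable over $\BR$; and $\pol(g)$ lies in $\Span(g)$ because in the coordinates where $g=\mathrm{diag}(g_c,g_a)$ with $g_c$ in the compact factor (elliptic, eigenvalues on the unit circle) and $g_a=\pol(g)$ hyperbolic (positive real eigenvalues), the closure of $\langle g\rangle$ contains the closure of the "hyperbolic part" one-parameter subgroup through $g_a$ — concretely, $\pol(g)=\lim \exp\!\big(\tfrac{1}{n}\log|g|\big)$-type limits land in $\overline{\langle g\rangle}$, or more cleanly: $\Span(g)$ decomposes as $\Span(g_c)\times\Span(g_a)$ and projection of $g$ to the second factor is $g_a=\pol(g)\in\Span(g)\subset H$.

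The step I expect to be the main obstacle is the fourth bullet, specifically upgrading "almost-direct product of $A_\BR^0$ and a compact group" to an honest direct product at the level of the possibly-disconnected real points $\CZ_{G_\BR^0}(A_\BR^0)$, since one must control both the finite center obstruction and the component group; I would handle this by invoking the global Cartan decomposition of the reductive Lie group $Z_\BR^0$ with respect to the restriction of $\theta$, together with the identification of the $(-1)$-eigenspace of $d\theta$ on $\mathrm{Lie}(Z_\BR^0)$ with $\FA$ (which is where maximality of the $\BR$-split torus $A$ enters decisively), rather than trying to argue purely algebraically.
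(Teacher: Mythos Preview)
The paper does not actually prove this proposition: immediately after the statement it says ``All the statements of Proposition \ref{split-flat} can be found for instance in \cite{Mostow}; see also section 1 in \cite{Prasad-Raghunathan} for a proof of the last claim.'' So there is no argument in the paper to compare against, and your proposal is in effect supplying what the authors chose to cite.

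Your outline is the standard one and is essentially correct. Bullets one, two, and five are fine. For bullets three and four, your instinct to use the global Cartan decomposition of the reductive group $\CZ_{G_\BR^0}(A_\BR^0)$ with respect to the restriction of $\theta$ is exactly right, and it does give an honest direct product $M\times A_\BR^0$ with $M=K\cap\CZ_{G_\BR^0}(A_\BR^0)$ compact; this is the argument in Mostow. One notational slip: you write $Z_\BR^0=\CZ_{G_\BR^0}(A_\BR^0)$, but the right-hand side need not be connected (indeed $M$ need not be), so be careful not to conflate the algebraic identity component with the Lie-group centralizer.

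The one place where your argument is genuinely loose is bullet six. The claim ``$\Span(g)$ decomposes as $\Span(g_c)\times\Span(g_a)$'' is not obviously true and is not what you need. What you need is just that $\pol(g)$ lies in the Zariski closure of $\langle g\rangle$. The clean argument (this is what Prasad--Raghunathan do) is via characters: diagonalize $g$ over $\BC$ as $\mathrm{diag}(\lambda_1,\dots,\lambda_m)$; then $\pol(g)=\mathrm{diag}(|\lambda_1|,\dots,|\lambda_m|)$, and any monomial relation $\prod\lambda_i^{n_i}=1$ forces $\prod|\lambda_i|^{n_i}=1$ by taking absolute values, so $\pol(g)$ satisfies every polynomial vanishing on $\langle g\rangle$ and hence lies in its Zariski closure, which is contained in $H$. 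Your ``$\lim\exp(\tfrac{1}{n}\log|g|)$'' remark is pointing at this but does not quite get there; replace it with the character argument and the proof is complete.
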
 

All the statements of Proposition \ref{split-flat} can be found for instance in \cite{Mostow}; see also section 1 in \cite{Prasad-Raghunathan} for a proof of the last claim.

\subsection{Locally symmetric manifolds}
Let $S$ be a symmetric space and $\Gamma$ a discrete subgroup of the isometry group $\Isom(S)$ with associated quotient $\Gamma\bs S$. If the quotient has finite volume then $\Gamma$ is a {\em lattice}. A cocompact lattice is {\em uniform}; otherwise it is {\em non-uniform}. We will be mainly interested in torsion free non-uniform lattices. Observe that if $\Gamma$ is a torsion free lattice then the quotient $M=\Gamma\bs S$ is a finite volume Riemannian manifold and the projection $\pi:S\to M$ is a covering map. We refer to any manifold which arises in this way as a {\em locally symmetric manifold}. Observe that according to this convention every locally symmetric manifold automatically has finite volume.

A locally symmetric manifold $M$ is {\em reducible} if there are locally symmetric manifolds $M_1,\dots,M_r$ and a finite cover
$$M_1\times\dots\times M_r\to M$$
Otherwise $M$ is {\em irreducible}. Observe that every locally symmetric manifold is finitely covered by a product of irreducible locally symmetric manifolds. Two locally symmetric manifolds $M$ and $N$ are {\em commensurable} if they have finite covers $M'\to M$ and $N'\to N$ which, up to replacing the metric by an equivalent metric, are isometric $M'\simeq N'$; see section \ref{sec-bla1} above. 

Abusing terminology, we will say that a locally symmetric manifold is, say blue, if its universal cover is. For instance, the {\em $\BR$-rank} of a locally symmetric manifold $M=\Gamma\bs S$ is the rank of $S$: $\rank_\BR(M)=\rank_\BR(S)$.

\subsection{Arithmetic manifolds}
Let $G$ be a connected semisimple algebraic group defined over $\BQ$. We say that a finite index subgroup $\Gamma\subset G_\BZ\cap G_\BR^0$ is {\em arithmetic}. If $\Gamma$ is arithmetic and torsion free, and $K\subset G_\BR^0$ is a maximal compact subgroup of the identity component of the group of real points of $G$, then we say that the associated locally symmetric manifold $M=\Gamma\bs G_\BR^0/K$ is {\em arithmetic} as well. It is due to Borel \cite{Borel-arit} that every arithmetic locally symmetric manifold has finite volume. 

Before going any further, recall that two algebraic groups admitting equivalent arithmetic quotients have the same $\BQ$-rank. Hence, it is unambiguous to define the $\BQ$-rank of an arithmetic locally symmetric manifold $M$ as the $\BQ$-rank of the corresponding algebraic group $G$
$$\rank_\BQ(M)=\rank_\BQ(G)$$
It is due to Godement that the compactness of $M$ is equivalent to the vanishing of its $\BQ$-rank (see \cite{Borel-arit}).

\begin{named}{Compactness criterium}[Godement]
Suppose that $G$ is a connected semisimple algebraic group defined over $\BQ$. Then $G_\BZ\bs G_\BR^0$ is compact if and only if $\rank_\BQ(G)=0$.
\end{named}

We now give a more precise statement in the case that $M$ is not compact.

\begin{prop}\label{q-split-proper}
Let $\Gamma$ be an arithmetic subgroup of a connected semisimple algebraic group $G$ defined over $\BQ$ and let $A\subset G$ be a $\BQ$-split torus. Then for all $g\in G_\BR^0$, the map
$$A_\BR^0\to \Gamma\bs G_\BR^0,\ \ \ h\mapsto[hg]$$ 
is proper. Here $[x]$ is the class of $x\in G_\BR^0$ in $\Gamma\bs G_\BR^0$.
\end{prop}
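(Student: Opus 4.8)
The plan is to reduce the statement to a known properness fact for reductive parts of $\BQ$-parabolic subgroups, using the structure theory recalled in Section \ref{sec:gkhk}. First I would observe that it suffices to treat the case $g=e$: indeed, properness of $h\mapsto[hg]$ is equivalent to properness of $h\mapsto[h]\cdot g^{-1}\cdot g$ composed with right translation by $g$ on $\Gamma\bs G_\BR^0$, which is a homeomorphism, so the general case follows from $g=e$ once we note that $gAg^{-1}$ need not be $\BQ$-split --- hence the reduction must instead be phrased as: the map $A_\BR^0\to\Gamma\bs G_\BR^0$, $h\mapsto[hg]$, is proper for \emph{all} $g$ if and only if it is proper for $g=e$, because the two maps differ by the homeomorphism $[x]\mapsto[xg]$ of $\Gamma\bs G_\BR^0$ precomposed appropriately. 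So I reduce to showing $A_\BR^0\to\Gamma\bs G_\BR^0$, $h\mapsto[h]$, is proper.

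Next, I would enlarge $A$ to a maximal $\BQ$-split torus $A'\supset A$ (possible by the conjugacy/maximality discussion preceding $\rank_k$); since $A_\BR^0$ is a closed subgroup of $A'^0_\BR$, properness of $A'^0_\BR\to\Gamma\bs G_\BR^0$ implies properness of the restriction to $A_\BR^0$. Thus I may assume $A$ is a maximal $\BQ$-split torus. Now pick a minimal $\BQ$-parabolic subgroup $P\subset G$ containing $A$; by Theorem \ref{parabolic-levy} applied to $P$ (or rather its refinement in \cite[4.15]{BT}), $\CZ_G(A)$ is a Levi subgroup of $P$, and $\CZ_G(A)_\BR^0$ splits, as in Proposition \ref{split-flat}, as a product of a compact group and $A_\BR^0$ (this is exactly the polar decomposition, valid here because $A$ is maximal $\BR$-split inside the anisotropic-mod-center Levi). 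The key analytic input is the reduction theory of Borel: there is a Siegel set $\mathfrak{S}=\omega\cdot A_t\cdot K$ (with $\omega$ compact in the unipotent radical times the compact part of the Levi, $A_t=\{a\in A_\BR^0:\alpha(a)\le t\ \forall\ \text{simple roots }\alpha\}$) such that $G_\BR^0=\Gamma\mathfrak{S}$ and, crucially, $\{\gamma\in\Gamma:\gamma\mathfrak{S}\cap\mathfrak{S}\ne\emptyset\}$ is finite. One then shows that on the complement of $A_t$ inside $A_\BR^0$ the projection stays in finitely many Siegel sets, while properness on $A_t$ itself follows because moving to infinity in $A_t$ forces some simple root $\alpha(a)\to 0$, which (again by finiteness of the Siegel intersection set and the fact that $\alpha$ is defined over $\BQ$) cannot happen without leaving every compact subset of $\Gamma\bs G_\BR^0$.

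The main obstacle --- and the step I would expect to demand the most care --- is making precise the last claim: that divergence to infinity within a single Siegel set $\mathfrak S$ projects to divergence in $\Gamma\bs G_\BR^0$. This is essentially the statement that the map $\mathfrak S\to\Gamma\bs G_\BR^0$ is proper, which is a standard but nontrivial consequence of the finiteness of $\{\gamma:\gamma\mathfrak S\cap\mathfrak S\ne\emptyset\}$ together with the precise geometry of Siegel sets (one needs that a sequence $a_n\in A_\BR^0$ with $a_n\to\infty$ in $A_\BR^0$ but staying in $A_t$ cannot be brought back into a fixed compact set by elements of $\Gamma$). I would either cite Borel's reduction theory \cite{Borel-arit} directly for the properness of $\mathfrak S\to\Gamma\bs G_\BR^0$ and for the fact that $A_\BR^0$ embeds properly once we restrict attention to the Siegel-set picture, or --- more cleanly --- invoke the special case where $A$ is the full $\BQ$-split torus and the statement becomes a textbook instance of Godement's compactness criterium combined with the description of the ends of $\Gamma\bs G_\BR^0$ via horospherical coordinates. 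Either way, the heart of the matter is reduction theory, and the algebraic-group preliminaries of Section \ref{sec:gkhk} serve only to set up the Levi/torus bookkeeping.
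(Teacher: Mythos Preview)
The paper does not actually prove Proposition~\ref{q-split-proper}: it is stated as a known fact and the reader is referred to \cite{Ji-MacPherson}, \cite{Leuzinger}, and \cite{Tomanov-Weiss} for more precise versions. So there is no ``paper's own proof'' to compare against; your proposal is an attempt to supply an argument the authors chose to outsource.

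Your overall strategy --- reduce to $g=e$ via right translation, enlarge to a maximal $\BQ$-split torus, and then invoke Borel's reduction theory through Siegel sets --- is the standard route and is essentially what underlies the cited references. The reduction to $g=e$ in your first paragraph is correct (the map $h\mapsto[hg]$ is the composition of $h\mapsto[h]$ with the homeomorphism $[x]\mapsto[xg]$); your detour about $gAg^{-1}$ is unnecessary and a bit confused, but harmless.

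There is, however, a genuine error in the middle of your argument. You assert that $\CZ_G(A)_\BR^0$ splits as a product of a compact group and $A_\BR^0$, appealing to Proposition~\ref{split-flat}. That proposition is about maximal $\BR$-split tori, not maximal $\BQ$-split tori. When $A$ is only maximal $\BQ$-split, the Levi $\CZ_G(A)$ is $\BQ$-anisotropic modulo its center, but $\BQ$-anisotropic does not imply $\BR$-anisotropic: the semisimple part of $\CZ_G(A)_\BR$ can be noncompact (think of $G$ obtained by restriction of scalars, where $\rank_\BQ(G)<\rank_\BR(G)$). Your parenthetical justification (``$A$ is maximal $\BR$-split inside the anisotropic-mod-center Levi'') is precisely the false step. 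This matters for your Siegel-set description: the compact set $\omega$ does not sit in ``unipotent radical times compact part of Levi'' but rather in $N_\BR\cdot{}^0M_\BR$ where $^0M$ is the full $\BQ$-anisotropic Levi, and one uses that $\Gamma\cap M$ is cocompact in $M_\BR^0$ (Godement) rather than that $M_\BR^0$ is itself compact. Once this is corrected, the rest of your outline --- properness of $\mathfrak S\to\Gamma\bs G_\BR^0$ from the Siegel finiteness property, and divergence in $A_t$ forcing divergence in the quotient --- goes through as in \cite{Borel-arit}.
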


From a geometric point of view, a much more precise and statement can be found in \cite{Ji-MacPherson} (see also \cite{Leuzinger}). For an again much better version of Proposition \ref{q-split-proper}, this time from a dynamical point of view, see \cite{Tomanov-Weiss}.

Arithmetic locally symmetric manifolds play a role in the present paper because the arithmeticity theorem of Margulis \cite{Margulis} essentially asserts that every locally symmetric manifold which is neither a product nor negatively curved is arithmetic.

\begin{named}{Arithmeticity theorem}[Margulis]
Every irreducible, locally symmetric manifold $M$ with finite volume and $\rank_\BR(M)\ge 2$ is commensurable to an arithmetic locally symmetric manifold.
\end{named}

For further reference we state here the following consequence of the arithmeticity theorem:

\begin{kor}\label{kor-arit}
Every finite volume locally symmetric manifold $M$ is finitely covered by a product of negatively curved finite volume locally symmetric spaces and arithmetic locally symmetric spaces.
\end{kor}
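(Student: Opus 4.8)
The plan is to deduce Corollary~\ref{kor-arit} from the structural results already assembled: the de Rham decomposition, the fact that every locally symmetric manifold is finitely covered by a product of irreducible ones, and Margulis' arithmeticity theorem. First I would reduce to the irreducible case. Recall that by definition every locally symmetric manifold $M=\Gamma\bs S$ is finitely covered by a product $M_1\times\dots\times M_r$ of irreducible locally symmetric manifolds; this is a purely group-theoretic statement about how $\Gamma$ intersects the (finite index) subgroup of $\Isom(S)^0$ preserving the de Rham factors, and it is recorded above. So it suffices to show that each irreducible finite volume locally symmetric manifold $M_i$ is either negatively curved or arithmetic — and the conclusion about $M$ follows by taking the common refinement of the finite covers $M_i'\to M_i$ and composing with the cover $M_1'\times\dots\times M_r'\to M_1\times\dots\times M_r\to M$.

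Next I would treat an irreducible $M_i$. Either $\rank_\BR(M_i)=1$, in which case $S_i$ is negatively curved (as recalled at the end of Subsection~\ref{sec-bla1}), and hence $M_i$ is a negatively curved finite volume locally symmetric manifold; or $\rank_\BR(M_i)\ge 2$, in which case Margulis' arithmeticity theorem applies and $M_i$ is commensurable to an arithmetic locally symmetric manifold $N_i$. In the latter case I need to upgrade ``commensurable to arithmetic'' to ``finitely covered by arithmetic''. This is where a small amount of care is required: by definition of commensurability there are finite covers $M_i'\to M_i$ and $N_i'\to N_i$ which are isometric (after replacing metrics by equivalent ones). Since $N_i$ is arithmetic, $N_i=\Lambda\bs G_\BR^0/K$ for a torsion-free arithmetic $\Lambda$, and a finite cover $N_i'$ of $N_i$ corresponds to a finite index subgroup $\Lambda'\subset\Lambda$; but a finite index subgroup of an arithmetic group is again arithmetic (it is still a finite index subgroup of $G_\BZ\cap G_\BR^0$), so $N_i'$ is itself an arithmetic locally symmetric manifold. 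Therefore $M_i$ is finitely covered by $M_i'\simeq N_i'$, which is arithmetic, as desired.

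Finally I would assemble the pieces: let $I\subset\{1,\dots,r\}$ index the rank-one factors and $J$ the remaining ones; for $i\in I$ put $M_i'=M_i$ (negatively curved), and for $i\in J$ let $M_i'\to M_i$ be the finite cover by an arithmetic locally symmetric manifold constructed above. Then $M_1'\times\dots\times M_r'$ is a finite cover of $M_1\times\dots\times M_r$, hence a finite cover of $M$, and it is a product of negatively curved finite volume locally symmetric spaces and arithmetic locally symmetric spaces. This proves the corollary.

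I do not expect a serious obstacle here; the only genuinely non-formal input is Margulis' arithmeticity theorem, which we are granted. The mild subtlety — and the one place I would be careful to spell out — is the passage from ``commensurable'' to ``finitely covered'', i.e.\ checking that a finite index subgroup of an arithmetic lattice is still arithmetic in the sense of the paper's (deliberately restrictive) definition, and that equivalent metrics do not disturb the statement since the notions ``negatively curved'', ``arithmetic'' and ``finite volume'' are all insensitive to rescaling de Rham factors.
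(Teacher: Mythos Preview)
Your proposal is correct and is exactly the argument the paper has in mind: the paper states Corollary~\ref{kor-arit} without proof, calling it simply ``a consequence of the arithmeticity theorem,'' and your reduction to irreducible factors followed by the rank dichotomy (rank one $=$ negatively curved, higher rank $\Rightarrow$ commensurable to arithmetic via Margulis) together with the observation that finite-index subgroups of arithmetic groups are arithmetic is precisely the intended unpacking. There is nothing to add.
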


\begin{bem}
In the literature, a locally symmetric manifold which is commensurable with an arithmetic manifold is often said to be itself arithmetic. In order to keep track of the connectivity properties of the associated algebraic group we have decided to give the more restrictive definition above. This does not reduce the scope in which our theorems are valid: the staments of Theorem \ref{peripheral} and Theorem \ref{main} are true for a locally symmetric manifold if and only if they are true for some, and hence for every, finite cover.
\end{bem}

\subsection{Ends of locally symmetric manifolds}
The ends of finite volume negatively curved locally symmetric manifolds, the so-called {\em cusps}, are well understood. Any such manifold $M$ contains a compact submanifold $\bar M\subset M$ whose complement is homeomorphic to $\D\bar M\times\BR$. Moreover, if $U$ is the closure of a connected component of the pre-image of $M\setminus\bar M$ under the covering map $\pi:S\to M$ then the group $\{\gamma\in\Gamma\vert\gamma U=U\}$ contains a finite index subgroup consisting of unipotent elements, and $\D U$ is homeomorphic to euclidean space and is hence contractible. We can restate this as follows:

\begin{prop}\label{thin-thick}
Let $S$ be a symmetric space with $\rank_\BR(S)=1$ and $\Gamma\subset\Isom(S)$ a torsion free lattice. The symmetric space $S$ admits a $\Gamma$-equivariant bordification $\bar S$ with the following properties:
\begin{enumerate}
\item The action $\Gamma\actson\bar S$ is properly discontinuous and free; the manifold $\Gamma\bs\bar S$ is compact, and its interior is homeomorphic to $\Gamma\bs S$.
\item $\bar S$ is contractible, and $\D\bar S$ is homotopy equivalent to a discrete set of points.
\item If $Z\subset\D\bar S$ is a connected component, then $\Stab_\Gamma(Z)$ contains a finite index subgroup consisting of unipotent elements. 
\end{enumerate}
\end{prop}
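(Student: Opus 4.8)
The plan is to construct $\bar S$ by imitating, in the rank one situation, the Borel--Serre bordification: one adjoins to $S$, at the tip of each cusp horoball, a copy of the corresponding horospherical subgroup. I would begin with the set-up. If $\Gamma$ is cocompact one takes $\bar S=S$ and there is nothing to prove, so assume $\Gamma$ is non-uniform. Write $G=\Isom(S)^0$; since $\rank_\BR(S)=1$, this is the identity component of the group of real points of a connected semisimple algebraic group defined over $\BR$ of $\BR$-rank one, and $S=G/K$ with $K$ maximal compact. For $\xi$ in the boundary at infinity $\D_\infty S$, the stabilizer $G_\xi$ is the identity component of the real points of a minimal parabolic $P_\xi$, and in the Langlands decomposition $P_\xi=M_\xi A_\xi N_\xi$ the group $M_\xi$ is compact, $A_\xi\cong\BR$, and the simply connected nilpotent group $N_\xi=R_u(P_\xi)_\BR$ acts simply transitively by isometries on every horosphere centered at $\xi$. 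I would then recall the classical description of the cusps of the finite volume rank one manifold $M=\Gamma\bs S$, obtained from the Margulis lemma and the thick--thin decomposition (see \cite{Eberlein,Witte-Morris}): $M$ has finitely many ends, all of them cusps; there are points $\xi_1,\dots,\xi_k\in\D_\infty S$ and $\Gamma_i$-invariant open horoballs $B_i$ centered at $\xi_i$ (writing $\Gamma_i:=\Stab_\Gamma(\xi_i)$) whose images in $M$ are disjoint embedded cusp neighborhoods with compact complement, and $\Gamma_i$ acts properly discontinuously, freely and cocompactly on every horosphere centered at $\xi_i$. The essential point to record is that $\Gamma_i$ is a cocompact lattice in $N_i\rtimes M_i$: since $\Gamma$ acts freely and properly discontinuously on $S$, no nontrivial element of $\Gamma_i$ is elliptic; since $M$ has finite volume, $\xi_i$ is a bounded parabolic point, so no nontrivial element of $\Gamma_i$ is hyperbolic; hence $\Gamma_i$ consists of parabolic isometries fixing $\xi_i$, whence $\Gamma_i\subset M_iN_i$, and the cocompact action on the horosphere $N_i=M_iN_i/M_i$ with compact point stabilizers makes $\Gamma_i$ a cocompact lattice in $M_iN_i$.

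Next I would carry out the construction and verify (1) and (2). Let $\bar M$ be a compact core of $M$ with boundary --- which exists since $M$ is topologically tame \cite{Borel-Serre} --- obtained by truncating each cusp along a horospherical cross-section, so that the interior of $\bar M$ is homeomorphic to $M=\Gamma\bs S$, $\bar M$ is homotopy equivalent to $M$ and hence aspherical, and $\D\bar M=\bigsqcup_{i=1}^{k}\Sigma_i$ with $\Sigma_i=\Gamma_i\bs N_i$ a closed aspherical manifold with $\pi_1(\Sigma_i)=\Gamma_i$ and universal cover $N_i$. I would take $\bar S$ to be the universal cover of $\bar M$ with its deck action of $\Gamma=\pi_1(\bar M)=\pi_1(M)$; concretely $\bar S$ is obtained from $S$ by gluing a copy $e(\xi)$ of $N_\xi$ at the tip of the horoball centered at $\xi$, for each $\xi$ in the discrete $\Gamma$-set $\Xi:=\Gamma\cdot\{\xi_1,\dots,\xi_k\}$, so that $\D\bar S=\bigsqcup_{\xi\in\Xi}e(\xi)$, each component $e(\xi)$ being a copy of $N_\xi$ with $\Stab_\Gamma(e(\xi))=\Stab_\Gamma(\xi)=\Gamma_\xi$. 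Property (1) is then immediate: deck transformations act properly discontinuously and, as $\Gamma$ is torsion free, freely, and $\Gamma\bs\bar S=\bar M$ is compact with interior homeomorphic to $M$. For (2): every component $N_\xi$ of $\D\bar S$ is a simply connected nilpotent Lie group, hence diffeomorphic to a Euclidean space and contractible, so $\D\bar S$ is homotopy equivalent to its discrete set of components; and $\bar S$ is contractible because $\bar M$ is an aspherical manifold with boundary. (Should one prefer to avoid citing tameness, one can build $\bar S$ by hand and deduce contractibility from a Mayer--Vietoris and van Kampen computation for the cover of $\bar S$ by $S$ together with the pairwise disjoint, contractible open sets $e(\xi)\cup B_\xi$, $\xi\in\Xi$, each of which meets $S$ in the contractible horoball $B_\xi$.)

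Finally, for (3): if $Z\subset\D\bar S$ is a connected component, say $Z=e(\xi)$, then $\Stab_\Gamma(Z)=\Gamma_\xi$, which by the first step is a cocompact lattice in $N_\xi\rtimes M_\xi$ with $M_\xi$ compact; Auslander's generalization of Bieberbach's theorem then shows that $\Gamma_\xi\cap N_\xi$ is a lattice in $N_\xi$ of finite index in $\Gamma_\xi$ (see \cite{Witte-Morris}), and since $N_\xi=R_u(P_\xi)_\BR$ consists of unipotent elements, this is the required finite index subgroup. I expect the main obstacle to be not any single ingenious step but the assembly of classical input --- the structure of cusps of finite volume rank one locally symmetric manifolds and the Bieberbach--Auslander theorem; the one point that genuinely needs care is the identification of the cusp subgroup $\Gamma_\xi$ as a cocompact lattice in $N_\xi\rtimes M_\xi$, which is exactly where torsion freeness (to exclude elliptics) and finiteness of volume (to exclude hyperbolics, via $\xi$ being a bounded parabolic point) are used, the rest being formal once that and the existence of the tame core $\bar M$ are in hand.
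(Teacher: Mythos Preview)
The paper does not give a formal proof of this proposition; it simply records, in the paragraph preceding the statement, that the cusp structure of finite volume negatively curved locally symmetric manifolds is ``well understood'' (compact core $\bar M$ with complement $\partial\bar M\times\BR$, boundary components of the lift homeomorphic to euclidean space, stabilizers virtually unipotent) and then restates this as Proposition~\ref{thin-thick}. Your argument is correct and is precisely a detailed unpacking of that sketch---the thick--thin decomposition for the cusp structure, the Langlands decomposition $P_\xi=M_\xi A_\xi N_\xi$ to identify the horospheres with $N_\xi$, and Auslander's theorem to extract the finite-index unipotent subgroup---so it matches the paper's (implicit) approach.
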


Recall that a bordification $\bar V$ of a manifold $V$ is a manifold with boundary such that $V$ is homeomorphic to $\bar V\setminus\D\bar V$.

In the setting of arithmetic lattices, a suitable bordification of the symmetric space $S$ was constructed by Borel-Serre \cite{Borel-Serre}; related constructions are due to Grayson and Leuzinger \cite{Grayson,Leuzinger-BS}. We state here the results of \cite{Borel-Serre} as we will need them here:

\begin{sat}[Borel-Serre \cite{Borel-Serre}]\label{Borel-Serre}
Let $G$ be a connected semisimple algebraic group defined over $\BQ$, $G_\BR^0$ the connected component of the identity of the group of real points, $K\subset G_\BR^0$ a maximal compact subgroup and $S=G_\BR^0/K$ the associated symmetric space. 

There is a $G_\BQ$-equivariant bordification $\bar S$ of $S$ such that whenever $\Gamma\subset G_\BZ$ is a torsion free arithmetic subgroup of $G$ then:
\begin{enumerate}
\item The action $\Gamma\actson\bar S$ is properly discontinuous and free; the manifold $\Gamma\bs \bar S$ is compact, and its interior is homeomorphic to $\Gamma\bs\bar S$.
\item $\bar S$ is contractible and there is a $\Gamma$-equivariant homotopy equivalence $\tau:\D\bar S\to\Delta_\BQ(G)$ from the boundary of $\bar S$ to the rational Tits building $\Delta_\BQ(G)$ associated to $G$.
\end{enumerate}
\end{sat}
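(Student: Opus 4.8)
The plan is to recall the Borel--Serre construction \cite{Borel-Serre}. For each proper parabolic $\BQ$-subgroup $P\subset G$ fix a rational Langlands decomposition $P_\BR^0=N_P\rtimes(A_P M_P)$, where $N_P=R_u(P)_\BR$, the group $A_P M_P$ is (the identity component of the real points of) a Levi $\BQ$-subgroup, $A_P$ is the maximal $\BQ$-split torus in its center, and $M_P$ is the complementary reductive factor; let $X_P=M_P/(M_P\cap K)$ be the symmetric space of $M_P$. One sets $e(P)=N_P\times X_P$ and defines, as a set,
\[
\bar S=S\ \sqcup\ \bigsqcup_{P}e(P),
\]
the disjoint union being taken over all proper parabolic $\BQ$-subgroups. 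First I would topologize $\bar S$ using the \emph{geodesic action} of $A_P$ on $S$ coming from the $A_P$-factor of the Langlands decomposition: a sequence $a_n\cdot x_n\in S$ is declared to converge to a point of $e(P)$ when $a_n\to\infty$ in a fixed positive chamber of $A_P$ and the remaining horospherical coordinates of $x_n$ converge. A somewhat delicate verification then shows that $\bar S$ is a contractible manifold with corners, the codimension of the face $e(P)$ being $\dim A_P$; contractibility is immediate once one knows that $\bar S$ deformation retracts onto the contractible space $S$ along the geodesic action.

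Next I would check the equivariance: since $G_\BQ$ permutes the proper parabolic $\BQ$-subgroups by conjugation and $g\cdot e(P)=e(gPg^{-1})$ is compatible with horospherical coordinates, the isometric $G_\BR^0$-action on $S$ extends to a continuous action $G_\BQ\actson\bar S$. For $\Gamma\subset G_\BZ$ torsion-free arithmetic, properness and freeness of $\Gamma\actson\bar S$, together with compactness of $\Gamma\bs\bar S$, are where reduction theory is needed: via Siegel sets one shows that $\Gamma$ has only finitely many conjugacy classes of parabolic $\BQ$-subgroups, that $\Gamma\cap P$ acts cocompactly on $e(P)$, and that the geodesic-action coordinates decay rapidly enough that no $\Gamma$-orbit accumulates in $\bar S$; freeness on the interior $S$ is clear as $\Gamma$ is torsion-free, and on a face one checks that the stabilizer of a point of $e(P)$ lies in $\Gamma\cap P$ and acts freely there. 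This proves (1).

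For (2), I would cover the boundary $\D\bar S=\bigcup_P\overline{e(P)}$ by the closed faces. Each $\overline{e(P)}$ is contractible, since $N_P$ is unipotent hence homeomorphic to a Euclidean space, $X_P$ is contractible, and adjoining the lower-dimensional faces preserves contractibility. Moreover $\overline{e(P_0)}\cap\dots\cap\overline{e(P_r)}$ is nonempty --- and then again contractible --- exactly when $P_0\cap\dots\cap P_r$ is parabolic, i.e.\ exactly when $\{P_0,\dots,P_r\}$ spans a simplex of the rational Tits building $\Delta_\BQ(G)$ (compare Theorem \ref{facts-tits}). Hence the nerve of the cover $\{\overline{e(P)}\}$ is precisely $\Delta_\BQ(G)$, and the nerve lemma produces a homotopy equivalence $\tau:\D\bar S\to\Delta_\BQ(G)$; since $G_\BQ$ permutes the faces compatibly with its simplicial action on $\Delta_\BQ(G)$, one can carry out the nerve construction $G_\BQ$-equivariantly, hence in particular $\Gamma$-equivariantly.

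The step I expect to be the main obstacle is the reduction-theory input in the second paragraph --- establishing that $\Gamma$ acts properly discontinuously on $\bar S$ with compact quotient --- since it requires the precise asymptotic control of Siegel sets and of the geodesic action, as well as genuine care in setting up the manifold-with-corners structure near the deeper faces.
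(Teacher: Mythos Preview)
The paper does not prove this theorem; it is quoted from Borel--Serre \cite{Borel-Serre} and used as a black box. The only thing the paper adds is a one-line remark after the statement: the $\Gamma$-equivariance of the homotopy equivalence $\tau$ ``is not explicitly proved in \cite{Borel-Serre}; it follows from the fact that $\Gamma$ acts on $\D\bar S$ freely and discretely.'' So there is no proof in the paper to compare your sketch against.

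Your sketch is a faithful outline of the Borel--Serre construction itself. Two small comments. First, for part (2) you want to index the cover $\{\overline{e(P)}\}$ by the \emph{maximal} proper parabolic $\BQ$-subgroups, not by all of them: the vertices of $\Delta_\BQ(G)$ are the maximal parabolics, and it is the intersection pattern of the corresponding codimension-one faces that reproduces the building; as written, your nerve would be the full poset of rational parabolics (its order complex), which is only the barycentric subdivision of $\Delta_\BQ(G)$. Second, note that the paper's justification for the $\Gamma$-equivariance of $\tau$ is lighter than your equivariant nerve argument: once one has some homotopy equivalence $\D\bar S\to\Delta_\BQ(G)$ intertwining the $G_\BQ$-actions up to homotopy, the freeness and proper discontinuity of $\Gamma\actson\D\bar S$ let one straighten it to an honestly $\Gamma$-equivariant map by elementary obstruction theory.
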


The statement that the homotopy equivalence $\tau$ can be chosen to be $\Gamma$-equivariant is not explicitly proved in  \cite{Borel-Serre}; it follows from the fact that $\Gamma$ acts on $\D\bar S$ freely and discretely.

\section{Small dimensional manifolds are peripheral}\label{sec:peripheral}
In this section we prove Theorem \ref{peripheral}.

\begin{named}{Theorem \ref{peripheral}}
If $M=\Gamma\bs S$ is an arithmetic locally symmetric manifold then every map $f:X\to M$, where $X$ is a CW-complex of dimension $\dim(X)<\rank_\BQ(M)$, is peripheral.
\end{named}

Let $\bar S$ be the bordification of $S$ provided by Theorem \ref{Borel-Serre} and recall that $M$ is homeomorphic to the interior of $\bar M=\Gamma\bs\bar S$. In order to prove that the map $f:X\to M$ is peripheral, it suffices to prove that it is homotopic, within $\bar M$, to a map $X\to\D\bar M$. We will prove that this is the case by induction on the dimension of $X$. 

If $\dim(X)=0$ then $X$ is a discrete set of points. Since $\rank_\BQ(M)\ge 1$, it follows from Godement's compactness criterium that $M$ is non-compact and hence $\D\bar M$ is not empty. We can clearly homotope $f$ so that its image is contained in $\D\bar M$.

Suppose now that $\dim(X)=d\ge 1$ and that we have proved Theorem \ref{peripheral} for those CW-complexes with at most dimension $d-1$. We can in particular apply Theorem \ref{peripheral} to the restriction of $f$ to the $(d-1)$-skeleton of $X$. In other words, we can assume that the image under $f$ of the $(d-1)$-skeleton of $X$ is contained in $\D\bar M$. 

Consider the map $\tilde f:\tilde X\to\bar S$ between the universal coverings of $X$ and $\bar M$ respectively and let $\Delta_1,\Delta_2,\dots$ be representatives in $\tilde X$ of the classes of closed $d$-dimensional cells under the action of $\pi_1(X)$. In order to prove that $f$ is homotopic to a map with image in $\D\bar M$ it suffices now to show that for each $i$, the map $\tilde f\vert_{\Delta_i}:\Delta_i\to\bar S$ is properly homotopic to a map with image in $\D\bar S$. 

To prove that this is the case recall that $\D\bar S$ is homotopy equivalent to the rational Tits building $\Delta_\BQ(G)$. The latter is is homotopically equivalent to a bouquet of $(\rank_\BQ(M)-1)$-dimensional spheres \cite[Theorem 8.5.1]{Borel-Serre}; in particular, the assumption that 
$$\rank_\BQ(M)>d\ge 1$$
implies that $\pi_{d-1}(\D\bar S)=0$. Hence, there is a map
$$f_i':\Delta_i\to\D\bar S$$
with $f_i'\vert_{\D\Delta_i}=\tilde f\vert_{\D\Delta_i}$. Since $\bar S$ is itself contractible we deduce that $f_i'$ and $\tilde f\vert_{\Delta_i}$ are homotopic. This concludes the induction step and thus the proof of Theorem \ref{peripheral}.\qed
\medskip

Before moving on to more interesting topics we make some remarks on the general situation. We have to define first the $\BQ$-rank of an arbitrary finite volume locally symmetric manifold $M$:
\begin{itemize}
\item If $M$ is commensurable to an arithmetic locally symmetric manifold $M'$ we set $\rank_\BQ(M)=\rank_\BQ(M')$.
\item If $\rank_\BR(M)=1$, we set $\rank_\BQ(M)=0$ if $M$ is compact, and $\rank_\BQ(M)=1$ otherwise. 
\item If $M$ is covered by a product $M_1\times\dots\times M_s\to M$ we define 
$$\rank_\BQ(M)=\rank_\BQ(M_1)+\dots+\rank_\BQ(M_s)$$
\end{itemize}
These conventions are sometimes redundant, but they are always consistent. Most importantly, it follows from the arithmeticity theorem (see Corollary \ref{kor-arit}) that we have now defined the $\BQ$-rank of an arbitrary finite volume locally symmetric manifold.

Suppose now that $M$ is a general finite volume locally symmetric manifold and let $M_1,\dots,M_s$ be finite volume locally symmetric manifolds such that there is a finite cover
$$M_1\times\dots\times M_s\to M$$
and such that $M_i$ is either negatively curved or arithmetic for each $i$. Denoting by $S$ the universal cover of $M$ and by $S_i$ the universal cover of $M_i$, consider for $i=1,\dots,r$ the bordification $\bar S_i$ provided by Proposition \ref{thin-thick} and Theorem \ref{Borel-Serre} and set
$$\bar S=\bar S_1\times\dots\times\bar S_s$$
The manifold $\bar S$ is a bordification of $S$, and its boundary $\D\bar S$ is homotopy equivalent to a bouquet of $(\rank_\BQ(M)-1)$-spheres. Once this is said, the same argument used to prove Theorem \ref{peripheral} shows that every map
$$f:X\to M$$
with $\dim(X)<\rank_\BQ(M)$ is peripheral. The details are left to the reader.

\section{Periodic maximal flats}\label{sec:lemmas}
In this section we recall a few facts about periodic maximal flats. None of the statements proved here is going to surprise any half-expert in the field.
\medskip

Recall that a periodic maximal flat in a locally symmetric manifold $M=\Gamma\bs S$ is an isometric immersion
$$\phi:F\to M$$
of a compact locally euclidean Riemannian manifold of dimension $\dim F=\rank_\BR(M)$. The following result, essentially due to Prasad and Raghunathan, asserts that periodic maximal flats exist.

\begin{prop}\label{per-max-exist}
Every finite volume locally symmetric manifold contains a periodic maximal flat.
\end{prop}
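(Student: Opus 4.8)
The plan is to reduce the existence of a periodic maximal flat in an arbitrary finite volume locally symmetric manifold to the arithmetic case, and then invoke the theorem of Prasad--Raghunathan. Two reductions are needed: first from arbitrary finite volume locally symmetric manifolds to products of irreducible ones, and then from irreducible ones to the arithmetic (or negatively curved) situation.

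\medskip

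\emph{Step 1: Reduction to irreducible factors.} By Corollary \ref{kor-arit}, there is a finite cover $p:N=N_1\times\dots\times N_r\to M$ where each $N_i$ is either negatively curved of finite volume or arithmetic. By Proposition \ref{flat-product}, a product $F_1\times\dots\times F_r$ of maximal flats $F_i\subset S_i$ (where $S_i$ is the universal cover of $N_i$) is a maximal flat in the universal cover $S=S_1\times\dots\times S_r$ of $N$. If each $F_i$ projects to a periodic maximal flat $\phi_i:T_i\to N_i$ (with $T_i=\Lambda_i\bs F_i$ for $\Lambda_i=\Stab_{\Gamma_i}(F_i)$ acting cocompactly on $F_i$), then $\phi_1\times\dots\times\phi_r:T_1\times\dots\times T_r\to N$ is a periodic maximal flat in $N$. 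Composing with the covering $p$ gives a periodic maximal flat in $M$: the composition of an isometric immersion with a covering map is again an isometric immersion, and the domain remains compact. So it suffices to produce a periodic maximal flat in each $N_i$.

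\medskip

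\emph{Step 2: The negatively curved case.} When $\rank_\BR(N_i)=1$, a periodic maximal flat is just a closed geodesic, and every finite volume negatively curved locally symmetric manifold contains a closed geodesic (e.g.\ the shortest loop in a nontrivial free homotopy class represented in the compact core). This is classical, so I would dispatch it in a sentence.

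\medskip

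\emph{Step 3: The arithmetic case.} For $N_i=\Gamma_i\bs G_\BR^0/K_i$ arithmetic, I invoke \cite[Theorem 2.8]{Prasad-Raghunathan}: $G$ being semisimple over $\BQ$, there exists a maximal $\BR$-split torus $A\subset G$ defined over $\BQ$ such that $\Stab_{\Gamma_i}(A_\BR^0)$ (equivalently $\CN_{G_\BR^0}(A_\BR^0)\cap\Gamma_i$) is a cocompact lattice in $\CN_{G_\BR^0}(A_\BR^0)$. By Proposition \ref{R-split-flat} the torus $A$ corresponds to a maximal flat $F\subset S_i$, and by Proposition \ref{split-flat} we have $\Stab_{G_\BR^0}(F)=\CN_{G_\BR^0}(A_\BR^0)$, whose action on $F$ factors through $\pol$ onto $A_\BR^0$ with compact kernel. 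Hence the stabilizer $\Lambda=\Stab_{\Gamma_i}(F)$ acts on $F$ through a cocompact lattice of translations, so $\Lambda\bs F$ is a compact locally euclidean manifold and the induced map $\Lambda\bs F\to N_i$ is the desired periodic maximal flat. (One should check $\Lambda$ is torsion free, which is automatic since $\Gamma_i$ is, and that the map to $N_i$ is an immersion, which holds because $F\hookrightarrow S_i$ is a totally geodesic embedding and $S_i\to N_i$ is a covering.)

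\medskip

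The main obstacle is bookkeeping rather than conceptual: one must ensure the reductions interact correctly with the definition of ``periodic maximal flat'' as an \emph{isometric immersion of a compact} locally euclidean manifold --- in particular, verifying that the stabilizer of a flat acts cocompactly (which is exactly the content extracted from Prasad--Raghunathan via Propositions \ref{split-flat} and \ref{R-split-flat}), and that compactness of the quotient torus is preserved under taking products and composing with finite covers. None of this is hard, but it is the part that needs care; the genuine mathematical input is entirely outsourced to \cite{Prasad-Raghunathan} and Margulis's arithmeticity theorem.
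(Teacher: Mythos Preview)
Your proposal is correct and follows essentially the same route as the paper: pass to a finite cover that is a product of negatively curved and arithmetic factors (Corollary \ref{kor-arit}), use Proposition \ref{flat-product} to reduce to the factors, handle the rank-one case by the existence of closed geodesics, and cite Prasad--Raghunathan for the arithmetic factors. The paper's proof is terser --- it simply invokes \cite[Theorem 2.8]{Prasad-Raghunathan} without unpacking how to extract the periodic flat via Propositions \ref{R-split-flat} and \ref{split-flat} --- but the structure is identical. One small quibble: in Step 3 you say the action of $\Stab_{G_\BR^0}(F)=\CN_{G_\BR^0}(A_\BR^0)$ on $F$ ``factors through $\pol$'', but $\pol$ is only defined on the centralizer; what you need is that the centralizer has finite index in the normalizer and that the kernel of the centralizer's action on $F$ is compact, which together give cocompactness of $\Lambda$ on $F$.
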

\begin{proof}
Observe that if a finite cover $M'$ of the locally symmetric space $M$ has a periodic maximal flat, then so does $M$. Hence, by Corollary \ref{kor-arit}, it suffices to prove Proposition \ref{per-max-exist} for products of negatively curved finite volume locally symmetric spaces and arithmetic locally symmetric spaces. By Proposition \ref{flat-product}, it suffices to show that periodic maximal flats exist in every one of the factors. It is well-known that every negatively curved locally symmetric space contains a closed non-trivial geodesic, i.e.~a periodic maximal flat. For the arithmetic factors, the claim is due to Prasad and Raghunathan \cite[Theorem 2.8]{Prasad-Raghunathan}.
\end{proof}

Until the end of this section we suppose:
\begin{itemize}
\item[(*)] $G$ is a connected semisimple algebraic group defined over $\BQ$, $S=G_\BR^0/K$ is the associated symmetric space, $\Gamma\subset G_\BZ$ is a finite index torsion free subgroup, $F\subset S$ is a maximal flat with associated maximal $\BR$-split torus $A$, and finally 
$$\Lambda\subset\Gamma\cap\CZ_{G_\BR^0}(A_\BR^0)$$
is an abelian subgroup of $\Gamma$ centralizing $A_\BR^0$. 
\end{itemize}
Observe that $\Lambda$ is discrete and torsion free. We denote by $\rank_\BZ(\Lambda)$ the rank of $\Lambda$ as an abelian group, i.e.~the minimal number of elements needed to generate $\Lambda$.

\begin{lem}\label{lem:bound}
The identity component $\Span(\Lambda)$ of the Z-closure of $\Lambda$ is a $\BQ$-torus, and the identity component of $A\cap\Span(\Lambda)$ is an $\BR$-split torus of dimension at least $\rank_\BZ(\Lambda)$.
\end{lem}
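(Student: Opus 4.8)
The plan is to build $\Span(\Lambda)$ explicitly from the structure theory recalled in Section~\ref{sec:lss}, and then extract the needed dimension bound. First, observe that $\Lambda\subset\Gamma\subset G_\BZ$ consists of elements of $\CZ_{G_\BR^0}(A_\BR^0)$; by the third bullet of Proposition~\ref{split-flat}, every such element is semisimple. Moreover $\Lambda$ is abelian, so its elements commute and can be simultaneously diagonalized over $\BC$. The subtle point is that we want diagonalizability over $\BQ$: since $\Lambda\subset G_\BZ\subset\GL_m\BQ$, each $\lambda\in\Lambda$ is a semisimple element of $\GL_m\BC$ whose characteristic polynomial has rational coefficients, but its eigenvalues need not be rational. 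To get around this I would not appeal to part (2) of Proposition~\ref{prop:split} directly for $\Lambda$, but instead argue as follows: $\Span(\Lambda)$ is, by the discussion in Section~\ref{sec:gkhk}, an algebraic group defined over $\BQ$ (the identity component of the Z-closure of the subgroup generated by the subset $\Lambda\subset G_\BQ$); it is commutative (being topologically generated by commuting elements) and connected, and it consists of semisimple elements because $\Lambda$ does and the semisimple elements in a commutative algebraic group form a Z-closed subgroup. A connected commutative algebraic group consisting of semisimple elements is a torus (cf.\ \cite[1.1]{BT}); being defined over $\BQ$, $\Span(\Lambda)$ is a $\BQ$-torus. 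This establishes the first assertion.

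Next I would handle the intersection with $A$. Both $A$ and $\Span(\Lambda)$ are subgroups of the reductive group $\CZ_G(A_\BR^0)$'s algebraic hull — more precisely, $\Span(\Lambda)\subset\Span(\CZ_{G_\BR^0}(A_\BR^0))$, and by the splitting in the fourth bullet of Proposition~\ref{split-flat} the latter has $A$ as its "polar" part, with a compact complementary factor. Let $T$ denote the identity component of $A\cap\Span(\Lambda)$; it is a connected algebraic subgroup of the $\BR$-split torus $A$, hence by part (1) of Proposition~\ref{prop:split} it is itself an $\BR$-split torus. It remains to bound $\dim T$ from below by $\rank_\BZ(\Lambda)$. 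For this I would use the polar projection $\pol:\CZ_{G_\BR^0}(A_\BR^0)\to A_\BR^0$ of \eqref{polproj}. By the last bullet of Proposition~\ref{split-flat}, applied to the $\BR$-subgroup $\Span(\Lambda)$, we have $\pol(\Lambda)\subset\Span(\Lambda)$; since also $\pol(\Lambda)\subset A_\BR^0$, the image $\pol(\Lambda)$ lies in $A_\BR^0\cap\Span(\Lambda)$, hence in $T_\BR^0$ (it is connected-to-the-identity as the image of a group mapping the identity to the identity, or one replaces it by the subgroup it generates inside $T_\BR^0$). The kernel of $\pol$ is compact, so $\pol$ restricted to the discrete, torsion-free group $\Lambda$ is injective; therefore $\pol(\Lambda)$ is a discrete, torsion-free abelian subgroup of $T_\BR^0$ of rank exactly $\rank_\BZ(\Lambda)$. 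Since $T_\BR^0\cong\BR^{\dim T}$ (an $\BR$-split torus of dimension $d$ has identity component $(\BR^{>0})^d$ in the analytic topology), it cannot contain a discrete subgroup of rank exceeding $\dim T$, giving $\dim T\ge\rank_\BZ(\Lambda)$.

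The step I expect to be the main obstacle is the careful verification that $\Span(\Lambda)$ really is a $\BQ$-torus despite the eigenvalues of elements of $\Lambda$ being irrational: one must be sure that "connected, commutative, consisting of semisimple elements, defined over $\BQ$" genuinely forces "$\BQ$-torus" (as opposed to merely "$\BR$-torus" or "$\BC$-torus defined over $\BQ$"), and here the correct statement is that such a group is a $\BQ$-torus in the sense that it is a torus defined over $\BQ$ — it need \emph{not} be $\BQ$-split, which is consistent with the lemma only claiming that $A\cap\Span(\Lambda)$ (not $\Span(\Lambda)$ itself) is $\BR$-split. A secondary technical point is ensuring that $\pol(\Lambda)$ actually lands in the \emph{identity component} $T=(A\cap\Span(\Lambda))^0$ and not merely in $A\cap\Span(\Lambda)$; since $\pol$ is continuous and $\Lambda$ centralizes $A_\BR^0$, one can connect each $\pol(\lambda)$ to the identity within $A_\BR^0\cap\Span(\Lambda)_\BR^0$, or simply note that passing from $A\cap\Span(\Lambda)$ to its identity component only drops finitely many components and hence does not change the rank bound.
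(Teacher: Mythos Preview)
Your argument follows essentially the same route as the paper's: semisimplicity of $\Lambda$ via Proposition~\ref{split-flat}, the characterization of tori to show $\Span(\Lambda)$ is a $\BQ$-torus, and then the polar projection $\pol$ to push $\Lambda$ into $A\cap\Span(\Lambda)$ and count ranks. The structure is right.

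There is, however, one genuine technical slip. When you invoke the last bullet of Proposition~\ref{split-flat} ``applied to the $\BR$-subgroup $\Span(\Lambda)$'' to conclude $\pol(\Lambda)\subset\Span(\Lambda)$, that bullet requires $g\in H$. But $\Span(\Lambda)$ is only the \emph{identity component} of the Zariski closure of $\Lambda$, and there is no reason individual elements of $\Lambda$ lie in that identity component. Your proposed repairs in the final paragraph do not close this gap: the image $\pol(\Lambda)$ is a discrete set, so ``connected-to-the-identity'' says nothing, and the sentence about ``connecting each $\pol(\lambda)$ to the identity within $A_\BR^0\cap\Span(\Lambda)_\BR^0$'' presupposes exactly the containment you are trying to establish.

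The paper fixes this by first replacing $\Lambda$ with the finite-index subgroup $\Lambda'=\Lambda\cap\Span(\Lambda)$ (finite index because an algebraic group has finitely many Zariski components), noting $\rank_\BZ(\Lambda')=\rank_\BZ(\Lambda)$ since $\Lambda$ is torsion free, and only then applying the last bullet of Proposition~\ref{split-flat} with $H=\Span(\Lambda)$ and $g\in\Lambda'\subset\Span(\Lambda)$. An equivalent repair would be to apply that bullet with $H$ equal to the \emph{full} Zariski closure of $\Lambda$, obtain $\pol(\Lambda)\subset A\cap\overline{\Lambda}^{Z}$, and then pass to a finite-index subgroup landing in the identity component; but either way you must explicitly account for the possible disconnectedness before the rank comparison goes through.
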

\begin{proof}
By Proposition \ref{split-flat}, $\Lambda$ consists of semisimple elements. In particular, it follows from Proposition \ref{prop:split} that $\Span(\Lambda)$ is a torus. Since $\Lambda\subset\Gamma\subset G_\BZ$, it follows that $\Span(\Lambda)$ is $\Gal(\BC/\BQ)$-invariant and hence defined over $\BQ$. Before moving on, observe that since the Z-closure of $\Lambda$ is an algebraic group, it has finitely many connected components. In particular $\Lambda'=\Lambda\cap\Span(\Lambda)$ is a finite index subgroup of $\Lambda$. It follows that $\rank_\BZ(\Lambda')=\rank_\BZ(\Lambda)$ because $\Lambda$ is torsion free.

By assumption $\Lambda$, and hence $\Lambda'$, centralizes $A$. By Proposition \ref{split-flat}, the centralizer $\CZ_{G_\BR^0}(A_\BR^0)$ of $A_\BR^0$ in $G_\BR^0$ is the product of $A_\BR^0$ and a compact group. Recall that the projection 
$$\pol:\CZ_{G_\BR^0}(A_\BR^0)\to A_\BR^0$$
associated to this splitting of $\CZ_{G_\BR^0}(A_\BR^0)$ maps every $g$ to its polar part $\pol(g)$. By the last claim of Proposition \ref{split-flat}, we have that $\pol(g)\in\Span(\Lambda)$ for every $g\in\Lambda'$. In particular $\Span(\pol(\Lambda'))$, the identity component of the Zariski closure of $\pol(\Lambda')=\{\pol(g)\vert g\in\Lambda'\}$, is contained in both $\Span(\Lambda)$ and $A$. Being a connected subgroup of $A$, we deduce from Proposition \ref{prop:split} that $\Span(\pol(\Lambda'))$ is an $\BR$-split torus. Observe that the dimension of $\Span(\pol(\Lambda'))_\BR^0$ as a real Lie group bounds the dimension of $A\cap\Span(\Lambda)$ from below.

Since the kernel of the projection $\pol$ is compact, we deduce that $\pol(\Lambda')$ is a discrete group isomorphic to $\Lambda'$. On the other hand, being a subgroup of $A_\BR^0$, the group $\Span(\pol(\Lambda'))_\BR^0$ acts on $F$ by translations. Since $\Span(\pol(\Lambda'))_\BR^0$ contains the discrete group $\pol(\Lambda')$ we obtain that
\begin{align*}
\dim(A\cap\Span(\Lambda))
&\ge \dim_\BR(\Span(\pol(\Lambda'))_\BR^0\ge\rank_\BZ(\pol(\Lambda'))\\
&=\rank_\BZ(\Lambda')=\rank_\BZ(\Lambda)
\end{align*}
as claimed.
\end{proof}

Suppose now that $\Stab_\Gamma(F)$, the stabilizer of $F$ in $\Gamma$, acts cocompactly on $F$. By Proposition \ref{split-flat}, $\Stab_\Gamma(F)$ is a subgroup of $\CN_{G_\BR^0}(A_\BR^0)$. Since $\CZ_{\CG_\BR^0}(A_\BR^0)$ 
\begin{itemize}
\item has finite index in $\CN_{G_\BR^0}(A_\BR^0)$, and 
\item is the direct product of $A_\BR^0$ and a compact group 
\end{itemize}
we obtain that $\Stab_\Gamma(F)\bs \CN_{G_\BR^0}(A_\BR^0)$ is compact. In particular, for any $g\in G_\BR^0$, the image of the map
$$\CN_{G_\BR^0}(A_\BR^0)\to \Gamma\bs G_\BR^0,\ \ h\mapsto[hg]$$
is compact. It follows from Proposition \ref{q-split-proper} that there is no $\BQ$-split torus whose group of real points normalizes $A_\BR^0$. In particular, we have:

\begin{lem}\label{pre-nosplit}
If $\Stab_\Gamma(F)\bs F$ is compact, then $A\cap\Span(\Lambda)$ does not contain any non-trivial $\BQ$-split torus.\qed
\end{lem}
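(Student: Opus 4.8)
The plan is to argue by contradiction. Suppose that $T\subset A\cap\Span(\Lambda)$ is a non-trivial $\BQ$-split torus. Since $T$ is non-trivial and $\BQ$-split, its group of real points $T_\BR^0$ is a non-compact group (a product of copies of $\BR_{>0}$). The goal is to show that, under the hypothesis that $\Stab_\Gamma(F)\bs F$ is compact, the orbit map $T_\BR^0\to\Gamma\bs G_\BR^0$, $h\mapsto[h]$, has relatively compact image; this will contradict Proposition \ref{q-split-proper}, which asserts that this map is proper, because a proper continuous map out of the non-compact space $T_\BR^0$ cannot have relatively compact image (the preimage of the closure of the image would be all of $T_\BR^0$, hence compact).

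The first, and main, step is to convert the hypothesis on $F$ into a statement about the full normalizer $\CN_{G_\BR^0}(A_\BR^0)$. I would invoke Proposition \ref{split-flat}: $\Stab_{G_\BR^0}(F)=\CN_{G_\BR^0}(A_\BR^0)$, so that $\Stab_\Gamma(F)=\Gamma\cap\CN_{G_\BR^0}(A_\BR^0)$; moreover $\CZ_{G_\BR^0}(A_\BR^0)$ has finite index in $\CN_{G_\BR^0}(A_\BR^0)$, it splits as a direct product $\CZ_{G_\BR^0}(A_\BR^0)=A_\BR^0\times C$ with $C$ compact, and its action on $F$ factors through the projection $\pol$ onto $A_\BR^0$, while $A_\BR^0$ itself acts simply transitively on $F$ by Proposition \ref{R-split-flat}. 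Given these facts, the assumption that $\Stab_\Gamma(F)$ acts cocompactly on $F\cong A_\BR^0$ means exactly that $\pol\bigl(\Stab_\Gamma(F)\cap\CZ_{G_\BR^0}(A_\BR^0)\bigr)$ acts cocompactly on $A_\BR^0$; combining this with the compactness of $C$ and the finiteness of the index $[\CN_{G_\BR^0}(A_\BR^0):\CZ_{G_\BR^0}(A_\BR^0)]$ yields that $\Stab_\Gamma(F)\bs\CN_{G_\BR^0}(A_\BR^0)$ is compact.

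The remaining steps are routine. Since $\Stab_\Gamma(F)\subset\Gamma$, the map $\Stab_\Gamma(F)\bs\CN_{G_\BR^0}(A_\BR^0)\to\Gamma\bs G_\BR^0$, $[h]\mapsto[h]$, is well defined and continuous, so its image --- namely the image of $\CN_{G_\BR^0}(A_\BR^0)$ in $\Gamma\bs G_\BR^0$ --- is compact (the same holds for $h\mapsto[hg]$, $g\in G_\BR^0$, after an obvious modification). Because $T\subset A$ we have $T_\BR^0\subset A_\BR^0\subset\CN_{G_\BR^0}(A_\BR^0)$, so the image of the orbit map $T_\BR^0\to\Gamma\bs G_\BR^0$, $h\mapsto[h]$, is contained in this compact set. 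On the other hand Proposition \ref{q-split-proper}, applied to the $\BQ$-split torus $T$ (with $g$ the identity), says this same map is proper, and I would conclude as indicated above, reaching the desired contradiction.

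I expect the bookkeeping in the first step to be the only real obstacle: one must keep careful track of the fact that $\Stab_\Gamma(F)$ is not contained in $A_\BR^0$ but only in $\CN_{G_\BR^0}(A_\BR^0)$, passing through the finite-index subgroup $\CZ_{G_\BR^0}(A_\BR^0)$, quotienting out the compact factor $C$, and using that $\pol$ has compact kernel so that passing from a cocompact action on $F$ to a cocompact quotient of $\CN_{G_\BR^0}(A_\BR^0)$ is legitimate. Everything else is an immediate consequence of Propositions \ref{split-flat} and \ref{q-split-proper}.
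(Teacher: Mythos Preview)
Your proposal is correct and follows essentially the same argument as the paper: the paper's proof (given in the paragraph immediately preceding the lemma) also uses Proposition~\ref{split-flat} to deduce that $\Stab_\Gamma(F)\bs\CN_{G_\BR^0}(A_\BR^0)$ is compact, then concludes via Proposition~\ref{q-split-proper} that no non-trivial $\BQ$-split torus can have its real points inside $\CN_{G_\BR^0}(A_\BR^0)$. The only cosmetic difference is that the paper phrases the conclusion for any $\BQ$-split torus normalizing $A_\BR^0$ and then specializes, whereas you go straight to $T\subset A$.
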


Still assuming that $\Stab_\Gamma(F)\bs F$ is compact, we bound the dimension of those $\BQ$-split tori commuting with $\Lambda$.

\begin{lem}\label{lem:meat}
Suppose that $\Stab_\Gamma(F)\bs F$ is compact and that $S\subset G$ is a $\BQ$-split torus commuting with $\Lambda$. Then
$$\rank_\BZ(\Lambda)+\dim(S)\le\rank_\BR(G)$$
\end{lem}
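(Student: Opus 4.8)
The plan is to combine Lemma~\ref{lem:bound}, Lemma~\ref{pre-nosplit} and the structure of the centralizer of an $\BR$-split torus, the key point being that $\Lambda$ and $S$ together generate a $\BQ$-torus whose $\BR$-split part must live inside a single maximal $\BR$-split torus, namely $A$.

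First I would consider the subgroup $T := \Span(\Lambda \cup S)$, the identity component of the Zariski closure of the group generated by $\Lambda$ and $S$. Since $S$ is a torus (hence consists of semisimple elements), $\Lambda$ consists of semisimple elements by Proposition~\ref{split-flat}, and $S$ commutes with $\Lambda$, all these elements pairwise commute and are semisimple; by Proposition~\ref{prop:split}(2) applied over $\BC$ (or by the argument in Lemma~\ref{lem:bound}) $T$ is a torus, and since $\Lambda \subset G_\BZ$ and $S$ is defined over $\BQ$, $T$ is $\Gal(\BC/\BQ)$-invariant and hence defined over $\BQ$. Let $T_s \subset T$ be the maximal $\BQ$-split subtorus of $T$; it contains $S$, so $\dim(T_s) \ge \dim(S)$.

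Next I would locate an $\BR$-split torus inside $T$ of dimension at least $\rank_\BZ(\Lambda) + \dim(S)$. The group $A_\BR^0 \cdot S_\BR^0$ consists of commuting semisimple elements and lies in some maximal $\BR$-split torus; but $A$ is already maximal $\BR$-split and $S$ commutes with $\Lambda$, which need not force $S \subset A$ directly, so here is where care is needed. The clean route: $S$ commutes with $\Lambda$, and the proof of Lemma~\ref{lem:bound} shows $\Span(\pol(\Lambda')) \subset A \cap \Span(\Lambda)$ is an $\BR$-split torus of dimension $\ge \rank_\BZ(\Lambda)$. Now $S$ commutes with $\Lambda$ hence with $\Span(\Lambda)$, hence with $\Span(\pol(\Lambda'))$; but more is true — I claim $S \subset \CZ_G(A)$. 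Indeed $S$ commutes with $\Lambda \subset \CZ_{G_\BR^0}(A_\BR^0)$, and... this is the step that needs the hypothesis that $\Stab_\Gamma(F)\bs F$ is compact, via Lemma~\ref{pre-nosplit}: if $S \not\subset \CZ_G(A)$ one would produce a $\BQ$-split torus not centralizing but still interacting with $A_\BR^0$ inside $\Stab_\Gamma(F)\bs\CN_{G_\BR^0}(A_\BR^0)$, contradicting properness in Proposition~\ref{q-split-proper}. Granting $S \subset \CZ_G(A)$: then $A \cdot S$ is an $\BR$-split torus containing the maximal $\BR$-split torus $A$, so $A \cdot S = A$, i.e.\ $S \subset A$. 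Therefore $S \subset A \cap \Span(\Lambda \cup S)$, and together with $\Span(\pol(\Lambda'))$ we get an $\BR$-split torus $S \cdot \Span(\pol(\Lambda')) \subset A$ of dimension $\ge \dim(S) + \rank_\BZ(\Lambda)$ — for the dimension addition to be valid I would check $S$ and $\Span(\pol(\Lambda'))$ are "independent" inside $A$, e.g.\ by noting $S$ is $\BQ$-split while $A \cap \Span(\Lambda)$ contains no non-trivial $\BQ$-split torus by Lemma~\ref{pre-nosplit}, so $S \cap \Span(\pol(\Lambda'))$ is trivial. Since every $\BR$-split torus in $A$ has dimension at most $\dim A = \rank_\BR(G)$, we conclude $\rank_\BZ(\Lambda) + \dim(S) \le \rank_\BR(G)$.

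The main obstacle I expect is the step "$S$ commutes with $\Lambda$ $\Rightarrow$ $S \subset \CZ_G(A)$ (hence $S\subset A$)"; naively $S$ need only commute with the subgroup $\Lambda$ generates, not with all of $A_\BR^0$, so one genuinely needs to feed the cocompactness hypothesis through Proposition~\ref{q-split-proper} / Lemma~\ref{pre-nosplit} rather than argue purely algebraically. Concretely, I would argue: the $\BQ$-split torus $S$ normalizes (in fact centralizes) $\Lambda$; consider $\Stab_\Gamma(F) \cdot S_\BR^0$ and note its image in $\Gamma \bs G_\BR^0$; if this image were non-compact we would be fine, but cocompactness of $\Stab_\Gamma(F)\bs F$ forces, as in the paragraph preceding Lemma~\ref{pre-nosplit}, that $\Stab_\Gamma(F)\bs \CN_{G_\BR^0}(A_\BR^0)$ is compact, and one then shows $S_\BR^0$ must stay within a bounded distance of $A_\BR^0$ in a way that, by the properness in Proposition~\ref{q-split-proper}, is only possible if $S_\BR^0 \subset \CZ_{G_\BR^0}(A_\BR^0)$, whence $S \subset \CZ_G(A)$ and the maximality of $A$ gives $S \subset A$. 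Once this containment is secured, the rest is the dimension count above, which is routine given Lemmas~\ref{lem:bound} and~\ref{pre-nosplit}.
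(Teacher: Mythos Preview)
Your argument has a genuine gap at the step ``$S$ commutes with $\Lambda$ $\Rightarrow$ $S\subset\CZ_G(A)$ (hence $S\subset A$)''. You correctly identify this as the obstacle, but the justification you sketch does not work and the claim itself is most likely false. Commuting with $\Lambda$ only forces $S$ to centralize $\Span(\Lambda)$, which can be a very small subtorus of $A$ (or not contained in $A$ at all --- only $A\cap\Span(\Lambda)$ is); there is no reason $S$ should see the rest of $A$. Your attempted dynamical argument is circular: to invoke the compactness of $\Stab_\Gamma(F)\bs\CN_{G_\BR^0}(A_\BR^0)$ against the properness of Proposition~\ref{q-split-proper} for $S_\BR^0$, you would already need $S_\BR^0\subset\CN_{G_\BR^0}(A_\BR^0)$, which is precisely what you are trying to establish.

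The fix is that this step is entirely unnecessary, and the paper avoids it. You already observed that $S$ commutes with $\Span(\Lambda)$, hence with the $\BR$-split torus $A':=A\cap\Span(\Lambda)$. That is all you need: by Proposition~\ref{prop:split}, $\Span(A'\cup S)$ is then an $\BR$-split torus --- not necessarily contained in $A$ --- and therefore has dimension at most $\rank_\BR(G)$. The remainder of your argument is exactly right and matches the paper: $A'\cap S$ is finite because a nontrivial identity component would be a $\BQ$-split subtorus of $A'$ (Proposition~\ref{prop:split}), contradicting Lemma~\ref{pre-nosplit}; hence $\dim(A')+\dim(S)\le\dim\Span(A'\cup S)\le\rank_\BR(G)$, and Lemma~\ref{lem:bound} gives $\dim(A')\ge\rank_\BZ(\Lambda)$. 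So the cocompactness hypothesis enters only through Lemma~\ref{pre-nosplit}, purely algebraically, and not through any further dynamical argument.
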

\begin{proof}
Since $S$ commutes with $\Lambda$, it also commutes with $\Span(\Lambda)$ and hence with the $\BR$-split torus $A'=A\cap\Span(\Lambda)$. We deduce from Proposition \ref{prop:split} that $\Span(A'\cup S)$, the identity component of the Z-closure of the group generated by $A'\cup S$, is an $\BR$-split torus and hence
\begin{equation}\label{eqlem12}
\dim\Span(A'\cup S)\le\rank_\BR(G)
\end{equation}
We claim that $A'\cap S$ is finite. Otherwise, the identity component of $A'\cap S\subset S$ is a non-trivial $\BQ$-split torus by Proposition \ref{prop:split}. On the other hand, by Lemma \ref{pre-nosplit}, $A'$ does not contain non-trivial $\BQ$-split tori. We have proved that $S$ and $A'$ intersect in a finite set and hence that
\begin{equation}\label{eqlem13}
\dim(A')+\dim(S)\le\dim\Span(A'\cup S)
\end{equation}
The claim follows from \eqref{eqlem12}, \eqref{eqlem13}, and Lemma \ref{lem:bound}.
\end{proof}

The following is the main result of this section:

\begin{prop}\label{meat}
Suppose that $G$ is a connected semisimple algebraic group defined over $\BQ$, $K\subset G_\BR^0$ is a maximal compact subgroup of the identity component of the group of real points, $\Gamma$ is a torsion free finite index subgroup of $G_\BZ$,  $F$ is a flat in the symmetric space $S=G_\BR^0/K$ with $\Stab_\Gamma(F)\bs F$ compact, and $\Lambda\subset\Stab_\Gamma(F)$ is an abelian subgroup of $\Gamma$ stabilizing $F$.

Suppose also that $P$ is a parabolic subgroup defined over $\BQ$ and $S\subset P$ is a $\BQ$-split torus whose centralizer $\CZ_G(S)$ in $G$ is a Levi-subgroup of $P$. If $\Lambda\subset P$ then
$$\rank_\BZ(\Lambda)\le\rank_\BR(G)-\dim(S)$$
\end{prop}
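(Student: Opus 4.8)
The plan is to deduce Proposition \ref{meat} from Lemma \ref{lem:meat}. Comparing the hypotheses, two things have to be arranged. First, Lemma \ref{lem:meat} lives under the standing assumptions (*) of this section, where $\Lambda$ centralizes $A_\BR^0$, whereas here $\Lambda$ is only assumed to stabilize $F$. Second, Lemma \ref{lem:meat} needs a $\BQ$-split torus that genuinely commutes with $\Lambda$, while here $S$ is only known to lie in a parabolic $P$ having $\CZ_G(S)$ as a Levi factor. Both gaps will be closed by passing to finite-index subgroups of $\Lambda$ — harmless, since $\Lambda$ is torsion free abelian, so the $\BZ$-rank does not drop — and by conjugating $S$, rather than $\Lambda$, by a suitable rational element of $R_u(P)$.

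First I would arrange (*). Since $F$ is maximal, Proposition \ref{split-flat} identifies $\Stab_{G_\BR^0}(F)$ with $\CN_{G_\BR^0}(A_\BR^0)$ and asserts that $\CZ_{G_\BR^0}(A_\BR^0)$ has finite index in it; hence $\Lambda_1:=\Lambda\cap\CZ_{G_\BR^0}(A_\BR^0)$ is a finite-index subgroup of $\Lambda$. It is an abelian subgroup of $\Gamma$ centralizing $A_\BR^0$, so it falls under (*), it satisfies $\rank_\BZ(\Lambda_1)=\rank_\BZ(\Lambda)$, and it still lies in $P$. By Lemma \ref{lem:bound}, $T:=\Span(\Lambda_1)$ is a $\BQ$-torus, and $T\subset P$ because $\Lambda_1\subset P$ and $P$ is Zariski closed. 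Replacing $\Lambda_1$ by the finite-index subgroup $\Lambda_2:=\Lambda_1\cap T$ — still under (*), of the same $\BZ$-rank — we have arranged that $\Lambda_2\subset T\subset P$ with $T$ a $\BQ$-torus.

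Next I would perform the conjugation. A $\BQ$-torus is reductive, so $T$ is a reductive $\BQ$-subgroup of the $\BQ$-group $P$, while $\CZ_G(S)$ is, by hypothesis, a Levi subgroup of $P$, and it is defined over $\BQ$ since $S$ is. By \cite[0.8]{BT} there is $u\in R_u(P)_\BQ$ with $uTu^{-1}\subset\CZ_G(S)$, equivalently $T\subset\CZ_G(S')$ where $S':=u^{-1}Su$. Since $u$ is rational, $S'$ is again a $\BQ$-split torus and $\dim(S')=\dim(S)$; and by construction $S'$ commutes with $T$, hence with $\Lambda_2$. Because we conjugated $S$ and left $\Lambda$ alone, $\Lambda_2$ is still a subgroup of $\Gamma$ stabilizing $F$, so Lemma \ref{lem:meat} applies to the pair $(\Lambda_2,S')$ and gives
$$\rank_\BZ(\Lambda_2)+\dim(S')\le\rank_\BR(G).$$
Substituting $\rank_\BZ(\Lambda_2)=\rank_\BZ(\Lambda)$ and $\dim(S')=\dim(S)$ yields the assertion.

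The one step requiring care is the last one: to be entitled to invoke Lemma \ref{lem:meat} at the end we must not push $\Lambda$ out of $\Gamma$ or off the flat $F$, and the torus we produce must still be $\BQ$-split. This forces us to conjugate $S$ rather than $\Lambda$, and to use precisely the characteristic-zero refinement of the Levi decomposition in \cite[0.8]{BT}, which provides a conjugating element with rational entries. Everything else — the two passages to finite-index subgroups, legitimate because $\Lambda$ is torsion free, and the fact that $\Span(\Lambda_1)$ is a $\BQ$-torus — is routine, the latter being Lemma \ref{lem:bound}.
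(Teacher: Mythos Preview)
Your proof is correct and follows essentially the same route as the paper's: pass to the finite-index subgroup $\Lambda\cap\CZ_{G_\BR^0}(A_\BR^0)$ to place yourself under (*), use Lemma \ref{lem:bound} to see that its span is a $\BQ$-torus hence reductive, conjugate $S$ by a rational element of $P$ so that the span lands in the Levi $\CZ_G(S)$, and finish with Lemma \ref{lem:meat}. Your additional passage to $\Lambda_2=\Lambda_1\cap T$ is harmless but not strictly needed, since Lemma \ref{lem:meat} ultimately only uses that $S$ commutes with $\Span(\Lambda)$.
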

\begin{proof}
Let $A$ be the maximal $\BR$-split torus associated to the maximal flat $F$. Since by Proposition \ref{split-flat} the centralizer $\CZ_{G_\BR^0}(A_\BR^0)$ is a finite index subgroup of $\Stab_{G_\BR^0}(F)$ we deduce that
$$\Lambda'=\Lambda\cap\CZ_{G_\BR^0}(A_\BR^0)$$
has finite index in $\Lambda$. Since $\Lambda$ is torsion free, we have that both $\Lambda$ and $\Lambda'$ have the same rank as abelian groups. 

Observe now that $\Span(\Lambda')$ is contained in $P$. By Lemma \ref{lem:bound}, $\Span(\Lambda')$ is a torus defined over $\BQ$. In particular, $\Span(\Lambda')$ is reductive and hence there is there is $g\in P_\BQ$ with 
$$\Span(\Lambda')\subset g\CZ_G(S)g^{-1}=\CZ_G(gSg^{-1})$$
Replacing the $\BQ$-split torus $S$ by the also $\BQ$-split torus $gSg^{-1}$ we may assume that $\Span(\Lambda')$ was contained in $\CZ_G(S)$ to begin with. In particular, we may assume that $S$ commutes with $\Lambda'$. The claim follows now from Lemma \ref{lem:meat}.
\end{proof}

\section{Proof of Theorem \ref{main}}\label{sec:proof1}

As indicated by the title of this section, we now prove Theorem \ref{main}. We remind the reader of the statement of the theorem:

\begin{named}{Theorem \ref{main}}
In a finite volume locally symmetric manifold, no periodic maximal flat is peripheral. 
\end{named}

Before launching the proof of Theorem \ref{main} we observe the following simple facts used below to simplify our situation:
\begin{itemize}
\item If $\bar M$ is a compact manifold whose interior is homeomorphic to $M$ then a map $f:F\to M$ is peripheral if and only if it is homotopic to a map $F\to\D\bar M$.
\item If $f:F\to M$ is a periodic maximal flat in the locally symmetric manifold $M$ and $\pi:F'\to F$ is a finite cover, then $f\circ\pi:F'\to M$ is again a periodic maximal flat. 
\item If the locally symmetric manifold $M$ contains a peripheral periodic maximal flat and $M'\to M$ is a finite cover, then $M'$ contains a peripheral periodic maximal flat as well.
\end{itemize}

Starting with the proof of Theorem \ref{main}, suppose for the time being that the finite volume locally symmetric manifold $M=\Gamma\bs S$ in consideration is irreducible. The general case will be treated below. 

Also assume for now that $\rank_\BR(M)=1$; equivalently, $M$ is negatively curved, and hence every periodic maximal flat is a non-trivial closed geodesic. Let $\gamma\subset M$ be such a closed geodesic  and denote by $\hat\gamma$ some element in $\pi_1(M)$ in the conjugacy class determined by $\gamma$. Observe that every non-trivial power of $\hat\gamma$ is semisimple. 

Let $\bar S$ be the bordification of $S$ provided by Proposition \ref{thin-thick} and recall that $\bar M=\Gamma\bs\bar S$ is a compact manifold whose interior is homeomorphic to $M$. As observed above, $\gamma$ is peripheral if and only if it can be homotoped within $\bar M$ into $\D\bar M$. Suppose that this is the case. It follows that there is a connected component of $\D\bar S$ which is stabilized by the deck transformation $\hat\gamma$. This contradicts the third claim of Proposition \ref{thin-thick}, namely that the stabilizer of every connected component of $\D\bar S$ contains a finite index subgroup consisting of unipotent elements. This concludes the proof of Theorem \ref{main} if $\rank_\BR(M)=1$.

Suppose from now on that $\rank_\BR(M)\ge 2$ and, seeking a contradiction, suppose that $\phi:F\to M$ is a peripheral periodic maximal cusp. Since we are assuming that $M$ is irreducible, it follows from the arithmeticity theorem that $M=\Gamma\bs S$ is commensurable to an arithmetic locally symmetric manifold. Passing to a suitable finite cover we may assume that:
\begin{enumerate}
\item $M=\Gamma\bs S$ is arithmetic, i.e.
\begin{itemize}
\item$S=G_\BR^0/K$ where $G$ is a connected semisimple algebraic group defined over $\BQ$, and $K$ is a maximal compact subgroup of the identity component $G_\BR^0$ of the group of real points, and
\item $\Gamma$ is a torsion free finite index subgroup of $G_\BZ$.
\end{itemize}
\end{enumerate}
Let $\bar S$ be the bordification of the symmetric space $S$ provided by Theorem \ref{Borel-Serre} and recall that $M$ is homeomorphic to the interior of $\bar M=\Gamma\bs\bar S$. As in the $\rank_\BR(M)=1$ case, the assumption that $\phi:F\to M$ is peripheral implies that:
\begin{itemize}
\item[(2)] There is a map $\psi:F\to\D\bar M$ homotopic within $\bar M$ to the periodic maximal flat $\phi$.
\end{itemize}
It is due to Bieberbach (see \cite{Wolf}) that every closed locally euclidean manifold is finitely covered by a topological torus. Hence, we may make the final assumption that:
\begin{itemize}
\item[(3)] $F$ is a topological torus of dimension $\dim F=\rank_\BR(M)$.
\end{itemize}
We suppose from now on that (1)-(3) are satisfied.

The main tool of the proof of Theorem \ref{main} is the following beautiful result due to McMullen \cite[Corollary 2.2]{McMullen} which we will apply to the topological torus $F$.

\begin{sat}[McMullen]\label{McMullen}
There is no open covering $\BT^n=U_1\cup\dots\cup U_n$ of the $n$-dimensional topological torus $\BT^n$ such that for every component $V$ of $U_i$, the image of the homomorphism
$$H_1(V,\BZ)\to H_1(\BT^n,\BZ)\simeq\BZ^n$$
is an abelian group of rank at most $i-1$.
\end{sat}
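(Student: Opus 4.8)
\textbf{Proof plan for Theorem \ref{McMullen} (McMullen's covering theorem).}
The plan is to deduce the statement from the homological/cohomological structure of the torus, using the fact that a good cover computes cohomology via the \v{C}ech--to--de~Rham (or nerve) machinery together with the cup-product structure on $H^*(\BT^n,\BZ)$. Suppose for contradiction that $\BT^n = U_1 \cup \dots \cup U_n$ is a cover as in the statement. First I would reduce to the case where each $U_i$ is a disjoint union of \emph{contractible} open sets: this is harmless because one may refine each $U_i$ by a cover by small balls (which does not increase the rank of the image on $H_1$ of any component, since a ball maps trivially), and then the condition ``rank of the image of $H_1(V,\BZ)$ is at most $i-1$'' on components $V$ of $U_i$ only gets stronger — wait, refining to balls forces that rank to be $0$, which kills the content; so instead I would keep the $U_i$ as given but pass to components and use that each component $V$ of $U_i$ carries a well-defined subspace $W_V = \Image(H^1(\BT^n) \to H^1(V))$ dual to the rank-$(\le i-1)$ image on $H_1$, hence $\dim W_V \le i-1$, and more importantly any product of $i$ classes in $H^1(\BT^n,\BR)$ that lies in the span of $W_V$ (after restriction) must vanish on $V$ by the anticommutativity and the dimension bound (a product of $i$ one-forms pulled back from a space of $\dim \le i-1$ of independent classes is zero).

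The key step is then a Mayer--Vietoris / cup-length argument. The fundamental class $[\BT^n] \in H^n(\BT^n,\BZ)$, equivalently the top cup product $\alpha_1 \smile \dots \smile \alpha_n$ of a basis $\alpha_1,\dots,\alpha_n$ of $H^1(\BT^n,\BZ)$, is nonzero. I would build, for each $i$, a class $\beta_i \in H^1(\BT^n,\BR)$ (or a relative class) that restricts to $0$ on $U_i$: concretely, for a component $V$ of $U_i$ the restriction $H^1(\BT^n,\BR) \to H^1(V,\BR)$ has image of dimension $\le i-1$, so on that component there are ``many'' classes dying, but one needs a \emph{single} class or controlled family dying on all of $U_i$ simultaneously. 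The honest approach is to argue by contradiction on the relative cohomology $H^*(\BT^n, U_i)$ and splice together the long exact sequences of the pairs $(\BT^n, U_i)$, i.e.\ use that the cup product $H^n(\BT^n,U_1) \otimes \dots$ — this is exactly the statement that if a space is covered by $n$ opens then the $n$-fold cup product of classes each vanishing on one $U_i$ vanishes, which forces $[\BT^n]=0$ once one produces such classes. Producing the vanishing classes from the rank hypothesis is the crux: for each $i$, on each component $V$ of $U_i$ one has $\le i-1$ independent surviving $H^1$-classes, so the restriction map from $\Lambda^i H^1(\BT^n,\BR)$ to $H^i(V,\BR)$ kills the image of $\Lambda^i$ of any $i$-dimensional subspace — but one must organize this over all components and all $i$ simultaneously.

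Concretely, the argument I would carry out: fix a basis $dx_1,\dots,dx_n$ of $H^1(\BT^n,\BR)$ coming from coordinates. For $i=1,\dots,n$ and a component $V$ of $U_i$, the image of $H_1(V,\BZ) \to H_1(\BT^n,\BZ) = \BZ^n$ has rank $\le i-1$, so there is an $(n-i+1)$-dimensional subspace of $H^1(\BT^n,\BR)$ restricting to $0$ on $V$. Choosing, via a partition-of-unity/averaging argument or via genericity of coordinates, classes so that for each $i$ there is a fixed $1$-form $\omega_i$ (a linear combination of the $dx_j$) whose restriction to every component of $U_i$ is exact — this is where one needs $i-1 < $ ``enough'', and it may require replacing $\BT^n$ by a finite cover, which is allowed since a finite cover of $\BT^n$ is again a torus and pulls back the cover — one then forms $\omega_1 \wedge \dots \wedge \omega_n$. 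On each point of $\BT^n$, which lies in some $U_i$, a Mayer--Vietoris bookkeeping shows the corresponding \v{C}ech representative of the cup product is a coboundary, so $[\omega_1 \wedge \dots \wedge \omega_n] = 0$ in $H^n(\BT^n,\BR)$; but if the $\omega_i$ are chosen to be linearly independent this contradicts $H^n(\BT^n,\BR) \ne 0$. \textbf{The main obstacle} is precisely the ``single fixed class $\omega_i$ per $U_i$'' step: the rank bound is given component-by-component, and different components of $U_i$ may kill different subspaces of $H^1$, so one cannot in general find one $\omega_i$ killing all of $U_i$ on the nose. The resolution — which is the heart of McMullen's proof — is to not insist on exactness but to work with the induced maps $H^i(\BT^n) \to H^i(U_i)$ and show the $n$-fold product of the images vanishes by iterating Mayer--Vietoris, using the rank hypothesis only to control $H^1 \to H^1$ of components and then bounding the image of cup powers; I would structure the induction on $i$ so that after handling $U_1,\dots,U_{i}$ one has pushed the top class into $H^n(\BT^n, U_1 \cup \dots \cup U_i)$ and the rank condition on $U_{i+1}$ provides exactly the room to continue, terminating in $H^n(\BT^n,\BT^n)=0$ — contradiction.
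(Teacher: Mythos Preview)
Note first that the paper does not prove this statement: it is quoted as \cite[Corollary~2.2]{McMullen} and used as a black box in the proof of Theorem~\ref{main}. So there is no in-paper argument to compare your attempt against; you are trying to supply what the authors deliberately outsource.

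Your cup-length strategy has a concrete gap that you yourself flag but do not close. You are right that for each component $V$ of $U_i$ the inclusion $V\hookrightarrow\BT^n$ factors up to homotopy through an $(i{-}1)$-torus (since $\BT^n$ is aspherical and the image of $\pi_1(V)$ is free abelian of rank at most $i-1$), so the restriction $H^k(\BT^n)\to H^k(U_i)$ vanishes for every $k\ge i$. But this is the \emph{only} vanishing the hypothesis yields uniformly across the components of $U_i$: in degrees $k<i$ different components kill different subspaces of $H^k(\BT^n)$, and there is in general no single nonzero class dying on all of $U_i$. The relative-cup-product mechanism you invoke needs classes $\tilde\alpha_i\in H^{p_i}(\BT^n,U_i)$ with $p_1+\dots+p_n=n$ whose absolute product is the generator of $H^n(\BT^n)$; the vanishing just established forces $p_i\ge i$, hence $\sum_{i=1}^n p_i\ge n(n+1)/2>n$ once $n\ge 2$, and no such factorization of the fundamental class exists.

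Your inductive repair --- push $[\BT^n]$ successively into $H^n(\BT^n,U_1\cup\dots\cup U_i)$ --- hits the same wall. From the long exact sequence of the triple and excision, the obstruction to the $(i{+}1)$-st lift lives in degree-$n$ cohomology of a pair built from $U_{i+1}$, and the rank hypothesis on $U_{i+1}$ says nothing about degree $n$ when $i+1<n$. Already for $n=2$ one checks that the two separate lifts of $[\BT^2]$ to $H^2(\BT^2,U_1)$ and to $H^2(\BT^2,U_2)$ both exist, yet producing their compatibility in the relative Mayer--Vietoris sequence is exactly the content of the theorem, not a consequence of what you have written down.

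McMullen's own argument (his Theorem~2.1, of which the statement here is an immediate corollary) is genuinely different from a cup-length computation and does not attempt to manufacture a per-$U_i$ vanishing class; you will need to consult \cite{McMullen} directly to see how the degree-counting obstruction above is circumvented.
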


In order to prove Theorem \ref{main} we will show that the torus $F$ has a covering of the kind prohibited by McMullen's theorem. This will contradict the assumption that $\phi:F\to M$ is peripheral.
 
\begin{bem}
Suppose that the torus $\BT^n$ is endowed with some simplicial structure. By an open simplex of dimension $k$ we mean a $k$-simplex without its boundary. Abusing notation we will say that $X\subset\BT^n$ is {\em simplicial} if it is a union of open simplices of possibly different dimensions. For example, a closed simplex is a simplicial set. Also, if $\phi:\BT^n\to\Delta$ is a simplicial map to some simplicial complex, then the preimage under $\phi$ of any simplicial set in $\Delta$ is a simplicial set. Refining the simplicial structure of $\BT^n$ and taking open stars, it is easy to see that every simplicial set $X$ is the deformation retract of an open set. In particular, Theorem \ref{McMullen} remains true if the sets $U_i$ are supposed to be simplical instead of open. 
\end{bem}

We continue with the proof of Theorem \ref{main}. Let $\tilde F$ be the universal cover of $F$ and lift the isometric immersion $\phi:F\to M=\Gamma\bs S$ to an isometric embedding $\tilde\phi:\tilde F\to S$ which is $\pi_1(\phi)$-equivariant. Let $\Phi$ be the image of $\pi_1(F)$ under $\pi_1(\phi)$ and observe that $\Phi$ is a free abelian group. Identifying $\tilde F$ with its image under $\tilde\phi$ we have that $\tilde F$ is a maximal flat in the symmetric space $S$ which is $\Phi$-invariant; in particular $\Stab_\Gamma(\tilde F)\bs\tilde F$ is compact.

Recall that $\bar S$ is the Borel-Serre bordification of $S$ and that $\bar M=\Gamma\bs\bar S$. We lift the homotopy between $\phi:F\to M$ and $\psi:F\to\D\bar M$ to $\bar S$ and obtain a $\Phi$-equivariant map
$$\tilde\psi:\tilde F\to\D\bar S$$
By Theorem \ref{Borel-Serre} there is a $\Gamma$-equivariant, and a fortiori $\Phi$-equivariant, map 
$$\tau:\D\bar S\to\Delta_\BQ(G)$$
where $\Delta_\BQ(G)$ is the rational Tits building of $G$. Recall that the latter is a simplicial complex of dimension $\rank_\BQ(G)-1=\rank_\BQ(M)-1$.

We consider the composition of the lift $\tilde\psi$ and the projection $\tau$ and, choosing a sufficiently fine $\Phi$-equivariant simplicial structure of $\tilde F$ and $\Delta_\BQ(G)$, we homotope $\tau\circ\tilde\psi$ to a $\Phi$-equivariant simplicial map 
$$\sigma:\tilde F\to\Delta_\BQ(G)$$
We may assume that the simplicial structure of $\Delta_\BQ(G)$ is a refinement of the standard simplicial structure.

For $i=1,\dots,\rank_\BQ(M)-1$ we denote by $\tilde U_i$ the preimage under $\sigma$ of the union of all $i$-dimensional open simplices in the standard simplicial structure of $\Delta_\BQ(G)$. Observe that the sets $\tilde U_1,\dots,\tilde U_{\rank_\BQ(M)-1}$ are simplicial, $\Phi$-invariant and cover $\tilde F$. We denote by $U_i$ the projection of $\tilde U_i$ to $F$. We claim:

\begin{lem}\label{lem:key}
The image of the homomorphism $H_1(V,\BZ)\to H_1(F,\BZ)$ has at most rank $\rank_\BR(M)-i-1$
for every connected component $V$ of $U_i$.
\end{lem}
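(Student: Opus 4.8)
### Proof proposal

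The plan is to reduce the statement about a connected component $V$ of $U_i$ to an application of Proposition \ref{meat}. The key observation is that $\tilde U_i$ was defined as $\sigma^{-1}$ of the union of the open $i$-simplices of $\Delta_\BQ(G)$, so if $\tilde V$ is a connected component of $\tilde U_i$ lying over $V$, then $\sigma(\tilde V)$ is contained in a single open $i$-simplex $\mathring\Delta$ of $\Delta_\BQ(G)$ (connectedness of $\tilde V$ plus the fact that distinct open $i$-simplices are not connected to each other inside the union of $i$-simplices). Let $P_0,\dots,P_i$ be the maximal parabolic $\BQ$-subgroups labelling $\mathring\Delta$ and put $P=P_0\cap\dots\cap P_i$, a parabolic $\BQ$-subgroup. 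By Theorem \ref{facts-tits} the pointwise stabilizer of $\mathring\Delta$ in $G_\BQ$, hence in $\Gamma$, is $\Gamma\cap P$, and moreover by Theorem \ref{facts-tits} (via Theorem \ref{parabolic-levy}) there is a $\BQ$-split torus $T\subset P$ of dimension $i+1$ whose centralizer $\CZ_G(T)$ is a Levi subgroup of $P$.

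First I would pin down the relevant subgroup $\Lambda$ of $\pi_1(F)\cong\Phi$. Let $\Phi_{\tilde V}=\{\gamma\in\Phi\mid\gamma\tilde V=\tilde V\}$ be the stabilizer of the component $\tilde V$; then $V=\Phi_{\tilde V}\bs\tilde V$ and the image of $H_1(V,\BZ)\to H_1(F,\BZ)$ is, up to finite-index issues, the image of $\Phi_{\tilde V}$ in $\Phi\cong H_1(F,\BZ)$ — more precisely its rank equals $\rank_\BZ$ of the image of $\pi_1(V)\to\pi_1(F)$, and since $\pi_1(V)$ surjects onto $\Phi_{\tilde V}$ (as $\tilde V$ is connected and $V=\Phi_{\tilde V}\bs \tilde V$ with $\tilde F$, hence $\tilde V$... here one must be slightly careful since $\tilde V$ need not be simply connected, but its image in $\pi_1(F)$ under the composite is exactly $\Phi_{\tilde V}$), the rank in question is $\rank_\BZ(\Phi_{\tilde V})$. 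Now $\sigma$ is $\Phi$-equivariant, so $\Phi_{\tilde V}$ fixes $\sigma(\tilde V)\subset\mathring\Delta$ pointwise; hence $\Phi_{\tilde V}\subset\Gamma\cap P$, i.e. $\Phi_{\tilde V}\subset P$. Set $\Lambda=\Phi_{\tilde V}$. Since $\Phi\subset\Stab_\Gamma(\tilde F)$ and $\Stab_\Gamma(\tilde F)\bs\tilde F$ is compact, $\Lambda$ is an abelian subgroup of $\Gamma$ stabilizing the maximal flat $\tilde F$, and it is contained in the parabolic $\BQ$-subgroup $P$ whose Levi is $\CZ_G(T)$ for the $\BQ$-split torus $T$ of dimension $i+1$.

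Then I would simply invoke Proposition \ref{meat} with this $G$, $K$, $\Gamma$, $F=\tilde F$, $\Lambda=\Phi_{\tilde V}$, $P$, and $S=T$: it gives
$$\rank_\BZ(\Lambda)\le\rank_\BR(G)-\dim(T)=\rank_\BR(M)-(i+1)=\rank_\BR(M)-i-1.$$
Since the rank of the image of $H_1(V,\BZ)\to H_1(F,\BZ)$ equals $\rank_\BZ(\Phi_{\tilde V})=\rank_\BZ(\Lambda)$, this is exactly the bound claimed in Lemma \ref{lem:key}.

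The main obstacle I anticipate is the bookkeeping in the first step: justifying carefully that (a) a connected component $\tilde V$ of $\tilde U_i$ maps under $\sigma$ into a single open $i$-simplex — this uses that the union of open $i$-simplices is a topological disjoint union of the open simplices themselves, so $\sigma$ continuous plus $\tilde V$ connected forces the image into one of them; and (b) that the rank of $\mathrm{Image}(H_1(V,\BZ)\to H_1(F,\BZ))$ genuinely equals $\rank_\BZ(\Phi_{\tilde V})$, which requires identifying the image of $\pi_1(V)$ in $\pi_1(F)=\Phi$ with the subgroup $\Phi_{\tilde V}$ (standard covering-space theory: $V\hookrightarrow F$ lifts to $\tilde V\hookrightarrow\tilde F$, the deck group of $\tilde F\to F$ is $\Phi$, and the image of $\pi_1(V,v)\to\pi_1(F,v)=\Phi$ is precisely the set of deck transformations preserving the component $\tilde V$ of the preimage of $V$, i.e. $\Phi_{\tilde V}$), together with the fact that for abelian $\Phi$ the Hurewicz map is an isomorphism so ranks of subgroups of $\pi_1$ agree with ranks of their images in $H_1$. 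Everything else is a direct citation of Proposition \ref{meat} and Theorem \ref{facts-tits}.
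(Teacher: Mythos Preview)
Your proposal is correct and matches the paper's proof essentially step for step: identify the image of $H_1(V,\BZ)\to H_1(F,\BZ)$ with the stabilizer $\Lambda=\Stab_\Phi(\tilde V)$, observe that $\sigma(\tilde V)$ lies in a single open $i$-simplex so that $\Lambda$ lands in the corresponding parabolic $P$, and then apply Proposition~\ref{meat} together with the $(i+1)$-dimensional $\BQ$-split torus provided by Theorem~\ref{facts-tits}. One tiny wording issue: you say $\Phi_{\tilde V}$ fixes $\sigma(\tilde V)$ \emph{pointwise}, but all that follows (and all you need) is that it stabilizes the open simplex $\mathring\Delta$ as a set, which already gives $\Phi_{\tilde V}\subset P$ by Theorem~\ref{facts-tits}.
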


It follows directly from Lemma \ref{lem:key} that reordering the cover $U_i$ and adding $\rank_\BR(M)-\rank_\BQ(M)$ copies of the empty set we obtain a simplicial cover
$$F=\emptyset\cup\dots\cup\emptyset\cup U_{\rank_\BQ(M)-1}\cup\dots\cup U_0$$
of the topological torus $F$ of the kind ruled out by McMullen's theorem. This contradicts the assumption that the periodic maximal flat $\phi:F\to M$ is peripheral. In order to conclude the proof of Theorem \ref{main} for irreducible locally symmetric space it remains to prove Lemma \ref{lem:key}.

\begin{proof}[Proof of Lemma \ref{lem:key}]
Suppose that $V$ is a connected component of $U_i$ and let $\tilde V$ be a connected component of the preimage of $V$ under the covering $\tilde F\to F$. Via the isomorphism $\Phi\simeq\pi_1(F)\simeq H_1(F,\BZ)$ we can identify the image of the homomorphism
$$H_1(V,\BZ)\to H_1(F,\BZ)$$
with the subgroup $\Lambda$ of $\Phi$ stabilizing $\tilde V$
$$\Lambda=\Stab_\Phi(\tilde V)=\{\gamma\in\Phi\vert\gamma\tilde V=\tilde V\}$$
Recall that $\tilde V$ is a connected component of the preimage under the map $\sigma$ of the union of all open $i$-dimensional simplices. In particular, there is an open simplex $s\subset\Delta_\BQ(G)$ with 
$$\dim(s)=i$$
and with $\tilde V\subset\sigma^{-1}(s)$.

The equivariance of the map $\sigma$ implies that for all $g\in\Lambda$ we have $gs\cap s\neq\emptyset$. Since the action of $G_\BQ$, and a fortiori $\Lambda$, on $\Delta_\BQ(G)$ is simplicial we deduce that in fact $g\in\Stab_{G_\BQ}(s)$. Recall that by Theorem \ref{facts-tits}, the stabilizer of $s$ in $G_\BQ$ is the intersection of $G_\BQ$ with some parabolic group $P$. We can summarize the preceding discussion with the formula
$$\hbox{Image}\{H_1(V,\BZ)\to H_1(F,\BZ)\}\simeq\Lambda\subset P$$
Since $P_\BQ$ is the stabilizer of the simplex $s\subset\Delta_\BQ(G)$ we obtain from Theorem \ref{facts-tits} that $P$ contains a $\BQ$-split torus $S$ of dimension
$$\dim(S)=\dim(s)+1=i+1$$
whose centralizer $\CZ_G(S)$ is a Levi subgroup of $P$. The claim follows now directly from Proposition \ref{meat}.
\end{proof}

So far we have proved Theorem \ref{main} for irreducible $M$. We sketch now the argument for general finite volume locally symmetric spaces. Observe first that, passing to finite covers, it suffices by Corollary \ref{kor-arit} to prove Theorem \ref{main} for
$$M=\Gamma\bs S=M_1\times\dots\times M_s$$
with $M_1,\dots,M_r$ arithmetic and $M_{r+1},\dots,M_s$ negatively curved.

For each $i$ let $S_i=(G_i)_\BR^0/K_i$ be the universal cover of $M_i$ where $G_i$ is a connected semisimple algebraic group. Let also $\Gamma_i\subset\Isom(S_i)$ be a torsion free lattice with $M_i=\Gamma_i\bs S_i$. By assumption we have $\Gamma_i\subset(G_i)_\BZ$ if $M_i$ is not negatively curved. If $M_i$ is negatively curved we may pass to a finite cover and assume that $\Gamma_i\subset(G_i)_\BR^0$. Before going any further observe that
$$\Gamma=\Gamma_1\times\dots\times\Gamma_s$$
Let $\CP$ be the set of those proper maximal parabolic subgroups of $G=G_1\times\dots\times G_s$ for which there is $i\in\{1,\dots,s\}$ such that:
\begin{itemize}
\item $G_j=\pi_j(P)$ for all $j\neq i$; here $\pi_j$ is the projection of $G$ to the j-th factor $G_j$.
\item If $M_i$ is arithmetic, then $\pi_i(P)$ is defined over $\BQ$.
\item If $M_i$ is negatively curved, then $\pi_i(P)\cap\Gamma_i$ contains a non-trivial unipotent element.
\end{itemize}
Observe that in all cases, the projection $\pi_i(P)$ of $P\in\CP$ to the factor $G_i$ is either the whole group or a proper maximal parabolic subgroup.

We define the {\em rational Tits building} of $M$ to be the simplicial complex $\Delta_\BQ(M)$ with vertex set $\CP$ where $\{P_0,\dots,P_k\}\subset\CP$ determines a $k$-simplex if $P_0\cap\dots\cap P_k$ is parabolic. 

\begin{bem}
Observe that if $M$ were arithmetic to begin with, then the rational Tits buildings $\Delta_\BQ(M)$ and $\Delta_\BQ(G)$ of  $M$ and $G$ respectively, would be the same.
\end{bem}

The lattice $\Gamma$ acts on $\Delta_\BQ(M)$ via its action on $\CP$ by conjugation. We summarize a few facts about $\Delta_\BQ(M)$:
\begin{enumerate}
\item For $i\in\{r+1,\dots,s\}$ denote by $\Delta_\BQ(M_i)$ the discrete set consisting of those maximal parabolic subgroups $P$ in $G_i$ such that $\Gamma_i\cap P$ contains a unipotent element. The rational Tits building $\Delta_\BQ(M)$ is $\Gamma$-equivariantly homeomorphic to the join $\Delta_\BQ(G_1)*\dots*\Delta_\BQ(G_r)*\Delta_\BQ(M_{r+1})*\dots*\Delta_\BQ(M_s)$ of the rational Tits buildings associated to all the factors.
\item If $\{P_0,\dots,P_k\}\subset\CP$ determines a $k$-simplex $\sigma$ in $\Delta_\BQ(M)$, then $\Stab_\Gamma(\sigma)=\Gamma\cap P_0\cap\dots\cap P_k$. 
\item If $\sigma$ is a $k$-simplex in $\Delta_\BQ(M)$ and $\Lambda\subset\Stab_\Gamma(\sigma)$ is an abelian group of $\Gamma$ stabilizing $\sigma$ and consisting of semisimple elements, then $\rank_\BZ(\Lambda)\le\rank_\BR(M)-k-1$. 
\item Let $\bar S_i$ be for $i=1,\dots,s$ the bordification of $S_i$ given by Proposition \ref{thin-thick} and Theorem \ref{Borel-Serre}, and set
$$\bar S=\bar S_1\times\dots\times\bar S_s$$
The manifold $\bar S$ is a $\Gamma$-equivariant bordification of the universal cover $S$ of $M$, and there is a $\Gamma$-equivariant map 
$$\tau:\D\bar S\to\Delta_\BQ(M)$$
Moreover, the quotient $\Gamma\bs\bar S$ is compact, and its interior is homeomorphic to $M$. 
\end{enumerate}
A few remarks on the proof of the validity of (1)-(4). The first claim follows directly from the definition of $\Delta_\BQ(M)$. The second assertion is true because it holds for all the factors of the decomposition of $\Gamma$ as a product of $\Gamma_1,\dots,\Gamma_s$. For irreducible $M$, the third claim is essentially the statement of Proposition \ref{meat}. The validity of (3) follows then again because it is true for all the factors in the product decomposition of $\Gamma$. Finally, the map in (4) is obtained from the maps $\D\bar S_i\to\Delta_\BQ(M_i)$ given by Theorem \ref{Borel-Serre} and the homeomorphism in (1). Details are left to the reader.

In order to conclude the proof of Theorem \ref{main} we just need to remark that given (1)-(4) above, we can repeat word-by-word the argument given in the arithmetic case. \qed
\medskip

\begin{bem}
Before devoting our time to other topics, we would like to observe that $\Delta_\BQ(M)$ is a spherical building of dimension $\rank_\BQ(M)-1$ because it is a join of the spherical buildings corresponding to all the factors. Moreover, the map $\tau$ given by (4) is a homotopy equivalence. This is consistent with the fact that $\D\bar S$ is homotopy equivalent to a bouquet of $(\rank_\BQ(M)-1)$-dimensional spheres. Compare with the discussion at the end of section \ref{sec:peripheral}.
\end{bem}

In some way, one could summarize the proof of Theorem \ref{main} as follows: since only a certain part of the fundamental group of a periodic maximal flat can lie in a parabolic subgroup, periodic maximal flats are not peripheral. On the other hand, a periodic flat
$$\phi:F\to M$$
such that the image of $\pi_1(F)$ under $\pi_1(\phi)$ lies in a proper parabolic $\BQ$-subgroup is certainly peripheral. 

With the same notation as before, suppose that $M=\Gamma\bs G_\BR^0/K$ is a non-compact arithmetic locally symmetric manifold and let $P\subset G$ be a maximal (proper) parabolic subgroup defined over $\BQ$. The group $P$ contains a connected semisimple subgroup $H$ defined over $\BQ$ and with 
$$\rank_\BR(H)=\rank_\BR(G)-1$$
See for instance \cite[11.7]{Borel-arit}. It follows from the Prasad-Raghunathan theorem \cite[Theorem 2.8]{Prasad-Raghunathan} that $H_\BZ$ contains a torsion free abelian subgroup $\Lambda$ of $\rank_\BZ(\Lambda)=\rank_\BR(H)$ and consisting of semisimple elements. Since $\Gamma$ has finite index in $G_\BZ$ we may assume, up to replacing $\Lambda$ by a finite index subgroup, that $\Lambda\subset H_\BZ\cap\Gamma$. The action of $\Lambda$ on the symmetric space $G_\BR^0/K$ stabilizes some flat $F$ of dimension 
$$\dim(F)=\rank_\BR(G)-1$$
Moreover, $\Lambda\bs F$ is compact and hence the map
$$\phi:\Lambda\bs F\to\Gamma\bs G_\BR^0/K=M$$
is a periodic flat. On the other hand, $\Lambda$ is by construction contained in $H_\BZ$ and hence in the rational parabolic subgroup $P$. It follows that $\phi$ is peripheral. We have proved that every non-compact arithmetic locally symmetric manifold contains a peripheral periodic flat of dimension $\rank_\BR(M)-1$. 

For general finite volume locally symmetric manifolds the same statement follows from the arithmetic case, the trivial $\rank_\BR=1$ case, and Corollary \ref{kor-arit}; details are again left to the reader. With the collaboration of the reader we have proved:

\begin{named}{Proposition \ref{optimal}}
For every non-compact locally symmetric manifold $M$ with finite volume there is a peripheral periodic flat $f:F\to M$ with $\dim(F)=\rank_\BR(M)-1$.\qed
\end{named}

Before concluding this section, we would like to observe that Proposition \ref{optimal} shows that in some sense Theorem \ref{main} is optimal. We do not know if Theorem \ref{peripheral} is also optimal but we suspect that this is the case. More precisely, we believe that the following question should have a positive answer:

\begin{quest}
Let $M$ be a finite volume locally symmetric space. Is there a non-peripheral periodic flat
$$\phi:F\to M$$ 
with $\dim(F)=\rank_\BQ(M)$?
\end{quest}

\section{Remarks}\label{sec:final}
Recall that a compact CW-complex $\CX$ in the locally symmetric space $M$ is a {\em spine} if $\CX$ is a deformation retract of $M$. A spine $\CX$ is a {\em soul} of $M$ if there is a homotopy 
$$H:[0,1]\times(M\setminus\CX)\to\bar M$$
starting at the identity and with $H(\{1\}\times(M\setminus\CX))\subset\D\bar M$. Here $\bar M$ is a compact manifold whose interior is homeomorphic to $M$. Notice that the complement in $\bar M$ of a sufficiently small open neighborhood of $\D\bar M$ is a soul of $M$. 

It follows directly from the definition that every map whose image misses some soul is peripheral and hence we deduce from Theorem \ref{main}:

\begin{named}{Corollary \ref{kor:soul}}
If $\CX\subset M$ is a soul of $M$, then the image of every map homotopic to a periodic maximal flat intersects $\CX$.\qed
\end{named}

At this point we would like to ask if Corollary \ref{kor:soul} is also true for general spines:

\begin{quest}
Let $\CX\subset M$ be a spine. Is there some periodic maximal flat $\phi:F\to M$ which can be homotoped outside of $\CX$?
\end{quest}

Before moving on we get a concrete incarnation of Corollary \ref{kor:soul}:

\begin{kor}\label{orbit}
Let $G$ be a connected semisimple algebraic group, $K\subset G_\BR^0$ a maximal compact subgroup of the group of real points, $\Gamma\subset G$ a torsion free lattice, and $\CX$ a soul of the locally symmetric space $M=\Gamma\bs G_\BR^0/K$. If $A\subset G$ is a maximal $\BR$-split torus with $(A_\BR^0\cap\Gamma)\bs A_\BR^0$ compact, then for every $g\in G$ the image of the map
$$A_\BR^0\to M,\ \ h\mapsto[hg]$$
intersects $\CX$. Here $[x]$ is the projection to $M$ of $x\in G_\BR^0$.
\end{kor}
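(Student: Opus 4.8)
The plan is to reduce Corollary \ref{orbit} directly to Corollary \ref{kor:soul} by exhibiting the orbit map $A_\BR^0\to M$, $h\mapsto[hg]$, as (a reparametrization of) a periodic maximal flat. First I would invoke Proposition \ref{R-split-flat}: the maximal $\BR$-split torus $A$ determines a unique maximal flat $F\subset S=G_\BR^0/K$ on which $A_\BR^0$ acts simply transitively by translations, so that the orbit map $A_\BR^0\to S$, $h\mapsto hg\cdot(gKg^{-1}\hbox{-translate of }F)$ — more carefully, after translating by $g$ we land in the flat $gF$, with associated maximal $\BR$-split torus $gAg^{-1}$. Since conjugation by $g$ is an isomorphism of algebraic groups carrying $A_\BR^0\cap\Gamma$ to $gAg^{-1}\cap g\Gamma g^{-1}$, there is no loss in absorbing $g$ and simply assuming $g=1$, i.e.~it suffices to treat the orbit $A_\BR^0\to M$, $h\mapsto[h]$, whose image is exactly $\pi(F)$ where $\pi:S\to M$ is the covering projection.

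Next I would check that $\pi|_F:F\to M$ is a periodic maximal flat in the sense of the paper. It is an isometric immersion of the complete locally euclidean manifold $\Stab_\Gamma(F)\bs F$ — here I use Proposition \ref{split-flat}, which identifies $\Stab_{G_\BR^0}(F)$ with $\CN_{G_\BR^0}(A_\BR^0)$ and tells us the action of the centralizer on $F$ factors through the polar projection to $A_\BR^0$ acting by translations; combined with the hypothesis that $(A_\BR^0\cap\Gamma)\bs A_\BR^0$ is compact and that $\CZ_{G_\BR^0}(A_\BR^0)$ has finite index in $\CN_{G_\BR^0}(A_\BR^0)$ and is $A_\BR^0$ times a compact group, this forces $\Stab_\Gamma(F)\bs F$ to be compact. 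Hence $F$ quotients to a compact locally euclidean manifold of dimension $\rank_\BR(S)=\rank_\BR(M)$, i.e.~$\phi:=\pi|_F$ is a periodic maximal flat, and its image coincides with the image of the orbit map $A_\BR^0\to M$, $h\mapsto[h]$.

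Finally, Corollary \ref{kor:soul} applied to the periodic maximal flat $\phi$ says the image of any map homotopic to $\phi$ — in particular the image of $\phi$ itself — meets any soul $\CX$ of $M$. Translating back through the identification $\Image(\phi)=\{[hg]:h\in A_\BR^0\}$ (restoring the general $g$) gives exactly the assertion of Corollary \ref{orbit}.

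The only genuine point requiring care — and the step I expect to be the main obstacle — is the passage from ``$(A_\BR^0\cap\Gamma)\bs A_\BR^0$ compact'' to ``$\Stab_\Gamma(F)\bs F$ compact'': one must argue that the full stabilizer $\Stab_\Gamma(F)$, a priori only contained in the normalizer $\CN_{G_\BR^0}(A_\BR^0)$ rather than the centralizer, still acts cocompactly on $F$, and that its action on $F$ agrees (via $\pol$) with translations by a cocompact subgroup of $A_\BR^0$. This is handled exactly by the bullet points of Proposition \ref{split-flat} together with the finite-index statement, so the argument is short but should be spelled out; everything else is a direct appeal to the results already established.
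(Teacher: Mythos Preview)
Your reduction ``absorb $g$ and assume $g=1$'' does not work, and this is where the argument breaks. Conjugation by $g$ carries $A_\BR^0\cap\Gamma$ to $gA_\BR^0g^{-1}\cap g\Gamma g^{-1}$, but the lattice $\Gamma$ is fixed and $g\Gamma g^{-1}$ need not equal $\Gamma$, so you cannot trade $(A,\Gamma,g)$ for $(gAg^{-1},\Gamma,1)$. More fundamentally, for general $g$ the map $h\mapsto[hg]$ is simply not a periodic maximal flat: the $A_\BR^0$-orbit of an arbitrary point $gK\in S$ is a totally geodesic flat only when $gK$ already lies on the unique maximal flat $F$ associated to $A$. (Already in $\BH^2=\SL_2\BR/\SO_2$ the orbits of the diagonal group off its axis are equidistant curves, not geodesics.) Hence the image of your map is not $\pi(F)$ in general, and you cannot feed it directly into Corollary~\ref{kor:soul} as a periodic maximal flat.

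The paper's proof fixes exactly this by using the full strength of Corollary~\ref{kor:soul}, which applies to every map \emph{homotopic} to a periodic maximal flat. One chooses $g_0\in G_\BR^0$ with $F=\{hg_0K:h\in A_\BR^0\}$ in $S$, so that $H_0:h\mapsto[hg_0]$ really is (identifying $A_\BR^0$ with $F$) a periodic maximal flat on the compact torus $(A_\BR^0\cap\Gamma)\bs A_\BR^0$. Any path $(g_t)_{t\in[0,1]}$ in $G_\BR^0$ from $g_0$ to $g$ then gives an $A_\BR^0$-equivariant homotopy $H_t(h)=[hg_t]$ which descends to $(A_\BR^0\cap\Gamma)\bs A_\BR^0$; Corollary~\ref{kor:soul} applied to $H_1$ yields the claim. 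Your compactness discussion in the last paragraph is correct but beside the point: the missing ingredient is this homotopy, not the passage from $A_\BR^0\cap\Gamma$ to $\Stab_\Gamma(F)$.
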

\begin{proof}
Let $F$ be the maximal flat in $S=G_\BR^0/K$ invariant under $A_\BR^0$ and choose $g_0\in G_\BR^0$ with 
$$F=\{[hg_0]\vert h\in A_\BR^0\}$$
Given any path $(g_t)_{t\in[0,1]}$ in $G$ starting at $g_0$ and ending in $g_1=g$ we consider the map
$$\tilde H: A_\BR^0\times[0,1]\to S,\ \ (h,t)\mapsto H_t(h)=[hg_t]$$
The map $\tilde H$ is $A_\BR^0$-equivariant and hence descends to a homotopy
$$H:\left((A_\BR^0\cap\Gamma)\bs A_\BR^0\right)\times[0,1]\to M$$
Identifying the flat $F$ with $A_\BR^0$ we can consider $H_0$ as a periodic maximal flat. By Corollary \ref{kor:soul}, the image of $H_1$, the final map in the homotopy, intersects $\CX$. Since the image of the map in the statement is equal to the image of $H_1$, the claim follows.
\end{proof}

Suppose from now on that $M=\Gamma\bs\SL_\BR/\SO_n$ where $\Gamma$ is a finite index torsion free subgroup of $\SL_n\BZ$. To every $A\in\SL_n\BR$ we associate the lattice $A^{-1}\BZ^n\subset\BR^n$. Doing so, we identify the symmetric space $\SL_n\BR/\SO_n$ with the space of isometry classes of marked lattices in $\BR^n$ with covolume $1$.

\begin{bem}
It would be more natural to identify the space of isometry classes of marked lattices in $\BR^n$ with $\SO_n\bs\SL_n\BR$. However, doing so we would have had to resign ouserselves to work with right actions throughout the paper.
\end{bem}

As mentioned in the introduction, a lattice is {\em well-rounded} if the set of its shortest non-trivial vectors spans $\BR^n$ as an $\BR$-vector space. The subset $\CX\subset\SL_n\BR/\SO_n$ consisting of isometry classes of well-rounded lattices is the so-called {\em well-rounded retract}; see \cite{Ash,wr-minimal,no-spine,Soule} for assorted properties of the well-rounded retract. It is clear from the construction that $\CX$ is $\SL_n\BZ$-invariant; abusing notation, denote also by $\CX$ the projection of $\CX$ to $M$ and recall that it is due to Soul\'e and Ash that $\CX$ is a spine of $M$. As mentioned in the introduction, it follows from \cite[Proposition 6.3]{wr-minimal} that $\CX$ is also a soul.

\begin{prop}\label{wr-soul}
The projection of the well-rounded retract is a soul of $M$.\qed
\end{prop}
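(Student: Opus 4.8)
The plan is to separate the two conditions in the definition of a soul and dispatch each. The first condition, that $\CX$ is a spine of $M$ (a deformation retract), is exactly the theorem of Soul\'e for $n=3$ and of Ash in general, recalled above: the set of well-rounded lattices is an $\SL_n\BZ$-invariant deformation retract of $\SL_n\BR/\SO_n$, and this descends to a deformation retraction of $M=\Gamma\bs\SL_n\BR/\SO_n$ onto $\CX$. So the only thing that remains is to exhibit a homotopy $H:[0,1]\times(M\setminus\CX)\to\bar M$ with $H_0=\Id$ and $H_1(M\setminus\CX)\subset\D\bar M$, where $\bar M=\Gamma\bs\bar S$ is the Borel-Serre bordification from Theorem \ref{Borel-Serre}.

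To produce such an $H$ the natural idea is to degenerate each non-well-rounded lattice toward a cusp. A marked lattice $L$ lies off $\CX$ precisely when the $\BR$-span $V(L)$ of the set of all shortest nonzero vectors of $L$ is a proper subspace of $\BR^n$. Stretching $L$ in the directions orthogonal to $V(L)$, while contracting it along $V(L)$ so as to preserve the covolume, scales every minimal vector down by a common factor and, continued indefinitely, runs $L$ out of every compact subset of $\SL_n\BR/\SO_n$; in the Borel-Serre bordification this path converges to the boundary face associated with the $\BQ$-parabolic subgroup stabilizing the flag determined by $V(L)$. Since the assignment $L\mapsto V(L)$ is $\SL_n\BZ$-equivariant, a suitably chosen such family of paths descends, after passing to the $\Gamma$-quotient, to a homotopy of $M\setminus\CX$ of the required form; this is the content of \cite[Proposition 6.3]{wr-minimal}. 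Combined with the spine property, it shows that $\CX$ is a soul. I would also remark, as in the negatively curved model case discussed earlier, that since $\bar M$ is compact with interior $M$, being homotopable into $\D\bar M$ is equivalent to being homotopable out of every compact subset of $M$, so no separate properness argument at the level of $M$ is needed.

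The point at which I expect the real work to lie --- and the reason I would appeal to \cite[Proposition 6.3]{wr-minimal} rather than reconstruct the argument --- is the continuity of the degeneration across the strata of $M\setminus\CX$ on which the set of minimal vectors, and hence the subspace $V(L)$, jumps: the naive recipe above is discontinuous exactly there, and one must build the homotopy with some care, adapted to the cell structure of the complement of the well-rounded retract, to obtain an honest continuous, $\SL_n\BZ$-equivariant map that still reaches $\D\bar S$ on every $\Gamma$-cocompact piece. Granting that construction, the proposition follows at once.
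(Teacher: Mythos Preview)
Your proposal is correct and matches the paper's own treatment: the paper gives no argument beyond citing \cite[Proposition 6.3]{wr-minimal} (together with the Soul\'e--Ash spine result already recalled), and you invoke exactly the same references. The geometric sketch you add of the degeneration and the warning about discontinuity of $L\mapsto V(L)$ are helpful commentary, but they go beyond what the paper itself supplies.
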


Denote now by $H$ the subgroup of $\SL_n\BR$ consisting of diagonal matrices whose entries along the diagonal are positive. Observe that $H$ is the identity component of the set of real points of a maximal $\BR$-split torus in $\SL_n\BC$. Suppose that $A\in\SL_n\BR$ is such that the projection of $HA=\{BA\vert B\in H\}$ to $\SL_n\BR/\SL_n\BZ$ is compact; equivalently the projection of $A^{-1}H$ to $\SL_n\BZ\bs\SL_n\BR$ is compact. 

As discussed in \cite[Section 3]{McMullen}, the assumption that the projection of $HA$ to $\SL_n\BR/\SL_n\BZ$ is compact implies that $(A^{-1}HA)\cap\SL_n\BZ$, and hence $(A^{-1}HA)\cap\Gamma$, is a cocompact subgroup of the maximal $\BR$-split torus $A^{-1}HA$. It follows from Corollary \ref{orbit} that the projection of $A^{-1}H=(A^{-1}HA)A^{-1}$ to $M=\Gamma\bs\SL_n\BR/\SO_n$ intersects the well-rounded retract $\CX$ of $M$. Equivalently, $A^{-1}H$ intersects the locus of those $C\in\SL_n\BR$ with $C\BZ^n$ well-rounded. In other words, there is $B\in H$ with $A^{-1}B\in\CX$. This means that the lattice $B^{-1}A\BZ^n$ is well-rounded. Since obviously $B^{-1}\in H$, we have proved:

\begin{named}{Theorem \ref{McMullen-wr}}[McMullen \cite{McMullen}]
Let $H\subset\SL_n\BR$ be the diagonal group and suppose that $A\in\SL_n\BR$ is such that the projection of $HA$ is compact in $\SL_n\BR/\SL_n\BZ$. Then there is $B\in H$ such that $BA\BZ^n$ is a well-rounded lattice.\qed
\end{named}

McMullen proved in fact something stronger. Namely, he proved that whenever the projection of $HA$ to $\SL_n\BR/\SL_n\BZ$ is precompact, then there is $B$ in the closure $\overline{HA\SL_n\BZ}$ of the (right) $\SL_n\BZ$-orbit of $HA$ with $B\BZ^n$ well-rounded. Equivalently, if the projection of $A^{-1}H$ to $M$ is bounded then $\overline{A^{-1}H}$ intersects the well-rounded retract.  On the other hand, it is due to Tomanov-Weiss \cite{Tomanov-Weiss} that every locally symmetric space $M$ contains a compact set $C$ which intersects the image of every maximal flat in $M$. This raises the following question:

\begin{quest}\label{ques2}
Let $M=\Gamma\bs S$ be a finite volume locally symmetric manifold. Is there a compact set $C\subset M$ such that no maximal flat $F\subset S$ is bounded homotopic to some $F'$ whose projection to $M$ is disjoint from $C$?
\end{quest}

Here we say that a homotopy is {\em bounded} if the lengths of the trajectories are uniformly bounded above by some $L<\infty$. We would like to observe that if we replace bounded homotopic by proper homotopic, then the answer to 
\begin{figure}[h]
         \centering
         \includegraphics[width=8cm]{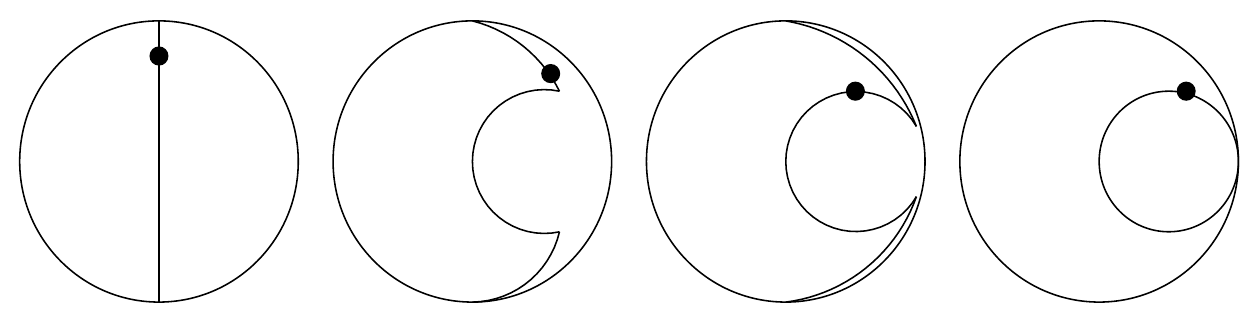}
         \caption{A proper homotopy between a geodesic and a horosphere in $\BH^2$.}\label{fig}
\end{figure}
Question \ref{ques2} is negative even for non-compact hyperbolic surfaces; see figure \ref{fig}.

\bigskip

{\small 
\noindent Alexandra Pettet, Department of Mathematics, University of Michigan

\noindent \texttt{apettet@umich.edu}

\bigskip

\noindent Juan Souto, Department of Mathematics, University of Michigan

\noindent \texttt{jsouto@umich.edu}}


\begin{thebibliography}{99}
\bibitem{Ash}
A. Ash, {\em Small-dimensional classifying spaces for arithmetic subgroups of general linear 
groups}, Duke Math. J. 51 (1984).

\bibitem{Borel-annals}
A. Borel, {\em Groupes lin\'eaires alg\'ebriques}, Annals of Math. 64, 1956.

\bibitem{Borel-arit}
A. Borel, {\em Introduction aux groupes arithmetiques}, Hermann, 1969.

\bibitem{Borel20}
A. Borel, {\em Linear Algebraic Groups}, Proc. Symp. Pure Math. 9 (1969).

\bibitem{Borel-ag}
A. Borel, {\em Linear Algebraic Groups}, Springer, 1991.

\bibitem{Borel-li}
A. Borel, {\em Lie groups and linear algebraic groups I. Complex and real groups}, in {\em Lie groups and automorphic forms}, Studies in Advanced Mathematics 37, AMS 2006.

\bibitem{BHC}
A. Borel and Harish-Chandra, {\em Arithmetic subgroups of algebraic groups}, Annals of Math. 75 (1962).

\bibitem{Borel-Serre}
A. Borel and J.-P. Serre, {\em Corners and arithmetic groups}, Comment. Math. Helv. 48 (1973). 

\bibitem{BT}
A. Borel and J. Tits, {\em Groupes reductifs}, Publ. IHES 27 (1965).

\bibitem{Brown}
K. Brown, {\em Buildings}, Springer, 1989.

\bibitem{Eberlein}
P. Eberlein, {\em Geometry of nonpositively curved manifolds}, Chicago Lectures in Math, 1996.

\bibitem{Grayson}
D. Grayson, {\em Reduction theory using semistability}, Comment. Math. Helv. 59 (1984).

\bibitem{Ji-li}
L. Ji, {\em Lectures on locally symmetric spaces and arithmetic groups}, in {\em Lie groups and automorphic forms}, Studies in Advanced Mathematics 37, AMS 2006.


\bibitem{Ji-MacPherson}
L. Ji and R. MacPherson, {\em Geometry of compactifications of locally symmetric spaces}, Annales de l'Institut Fourier 52, (2002).

\bibitem{Leuzinger}
E. Leuzinger, {\em Geodesic Rays in Locally Symmetric Spaces}, Diff. Geom. Appl. 6 (1996).

\bibitem{Leuzinger-BS}
E. Leuzinger, {\em An exhaustion of locally symmetric spaces by compact submanifolds with corners}, Invent. Math. 121 (1995).

\bibitem{Margulis}
G. Margulis, {\em Discrete Subgroups of Semisimple Lie Groups}, Springer, 1990. 

\bibitem{McMullen}
C. McMullen, {\em Minkowski's conjecture, well-rounded lattices and topological dimension}, Journal of the A.M.S. 18 (2005).

\bibitem{Mostow}
G. Mostow, {\em Strong rigidity of locally symmetric spaces}, Ann. Math. Studies, Vol. 78, Princeton University Press, 1973.

\bibitem{wr-minimal}
A. Pettet and J. Souto, {\em Minimality of the well-rounded retract}, Geom.\@Top. 12 (2008).

\bibitem{no-spine}
A. Pettet and J. Souto, {\em The spine that was no spine}, Enseign. Math. 54 (2008). 

\bibitem{Prasad-Raghunathan}
G. Prasad and M. Raghunathan, {\em Cartan subgroups and lattices in semisimple groups}, Annals of Math. 96 (1972).

\bibitem{bible}
Santiago el Justo, {\em Ep\'istola de Santiago}.

\bibitem{Soule}
C. Soul\'e, {\em The cohomology of $\SL_3\BZ$}, Topology 17 (1978).

\bibitem{Witte-Morris}
D. Witte-Morris, {\em Introduction to Arithmetic Groups}.

\bibitem{Tits}
J. Tits, {\em Buildings of spherical type and finite BN-pairs}, Lecture Notes in Mathematics 386, Springer, 1974.

\bibitem{Tomanov-Weiss}
G. Tomanov and B. Weiss, {\em Closed orbits for actions of maximal tori on homogeneous spaces}, Duke Math. J. 119 (2003).

\bibitem{Wolf}
J. Wolf, {\em Spaces of Constant Curvature}, Publish or Perish, 1974. 

\end{thebibliography}
\end{document}